\documentclass[12pt]{article}

\usepackage{amsmath,amsthm,amssymb}
\allowdisplaybreaks
\usepackage{mathrsfs}
\usepackage{cite}

\usepackage[pdftex,bookmarksopen=true,bookmarks=true,unicode,setpagesize]{hyperref}
\hypersetup{colorlinks=true,linkcolor=black,citecolor=black}

\usepackage{color}

\textwidth15.5cm
\textheight21cm
\oddsidemargin0cm
\evensidemargin0cm

\newtheorem{theorem}{Theorem}
\newtheorem{corollary}[theorem]{Corollary}
\newtheorem{lemma}[theorem]{Lemma}

\theoremstyle{remark}

\theoremstyle{remark}

\theoremstyle{remark}
\newtheorem{remark}[theorem]{Remark}

\newcommand{\R}{{\mathbb R}}

\newcommand{\set}{\mathscr V}

\begin{document}

\begin{center}{\Large \bf
 A moment problem for random  discrete measures
}\end{center}

{\large Yuri G. Kondratiev}\\
 Fakult\"at f\"ur Mathematik, Universit\"at
Bielefeld, Postfach 10 01 31, D-33501 Bielefeld, Germany;  NPU, Kyiv, Ukraine\\
 e-mail:
\texttt{kondrat@mathematik.uni-bielefeld.de}\vspace{2mm}

{\large Tobias Kuna}\\ University of Reading,
Department of  Mathematics,
Whiteknights,
PO Box 220,
Reading RG6 6AX, U.K.\\
 e-mail:
\texttt{ t.kuna@reading.ac.uk}\vspace{2mm}

{\large Eugene Lytvynov}\\ Department of Mathematics,
Swansea University, Singleton Park, Swansea SA2 8PP, U.K.\\
e-mail: \texttt{e.lytvynov@swansea.ac.uk}\vspace{6mm}


\noindent{\it AMS 2000 subject classifications.} Primary  60G55, 60G57; secondary 44A60, 60G51.\\[2mm]
{\it Key words and phrases.} Discrete random measure, moment problem, point process, random measure.

{\small

\begin{center}
{\bf Abstract}
\end{center}
\noindent

Let $X$ be a locally compact Polish space. A random measure  on $X$ is a probability measure on the space of all (nonnegative) Radon measures on $X$.
Denote by  $\mathbb K(X)$ the cone   of all  Radon measures $\eta$ on $X$ which are of  the form $\eta=\sum_{i}s_i\delta_{x_i}$,
where, for each $i$, $s_i>0$ and  $\delta_{x_i}$ is the Dirac measure at $x_i\in X$.
A random discrete measure  on $X$ is a  probability measure on $\mathbb K(X)$.
The main result of the  paper states a necessary and sufficient condition (conditional upon a mild {\it a priori\/} bound) when a random measure $\mu$ is also a random discrete measure.
 This condition is formulated solely  in  terms of moments  of the random measure $\mu$.
Classical examples of random discrete measures are completely random measures and additive subordinators, however, the main result holds independently of any independence property. As a corollary, a characterisation via a moments is given when a random measure is a point process.
 } \vspace{2mm}

\section{Introduction}
\label{tuyr7r}

Let $X$ be a locally compact Polish space, and let $\mathcal B(X)$ denote the associated Borel $\sigma$-algebra.
For example, $X$ can be the Euclidean space $\R^d$, $d\in\mathbb N$.
Let  $\mathbb M(X)$ denote the space of all (nonnegative) Radon measures on $(X,\mathcal B(X))$. The space $\mathbb M(X)$ is equipped with the vague topology.
Let $\mathcal B(\mathbb M(X))$ denote the Borel $\sigma$-algebra on $\mathbb M(X)$.

  Let us define the  {\it cone of (nonnegative) discrete Radon measures on $X$}   by
$$\mathbb K(X):=\left\{
\eta=\sum_i s_i\delta_{x_i}\in \mathbb M(X) \,\Big|\, s_i>0,\, x_i\in X
\right\}.$$
 Here $\delta_{x_i}$ denotes the Dirac measure with mass at $x_i$.
 In the above representation, the atoms $x_i$ are assumed to be distinct, i.e.,  $x_i \neq x_j$ for $i \neq j$, and their total number is at most countable. By convention, the cone $\mathbb K(X)$ contains the null mass $\eta=0$, which is represented by the sum over an empty set of indices $i$.
  As shown in \cite{HKPR}, $\mathbb K(X)\in\mathcal B(\mathbb M(X))$.

 A {\it random measure  on $X$} is a
 measurable mapping $\xi:\Omega\to\mathbb M(X)$, where  $(\Omega,\mathcal F,P)$ is a probability space,
see e.g.\ \cite{DVJ1,DVJ2,Kallenberg}. A random measure which takes values in $\mathbb K(X)$ with probability one will be called a {\it random discrete measure.} We will give results which characterize when a random measure is a random discrete measure in terms of its moments.

Let us recall  the classical characterization of a completely random measure by Kingman \cite{Kingman, DVJ2}.
A random measure $\xi$  is called {\it completely random} if, for any mutually
 disjoint sets $A_1,\dots,A_n\in\mathcal B(X)$, the random variables $\xi(A_1),\dots,\xi(A_n)$ are independent. Kingman's  theorem states that every completely random measure $\xi$ can be represented as $\xi=\xi_d+\xi_f+\xi_r$.
 Here $\xi_d$, $\xi_f$, $\xi_r$ are independent completely random measures such that: $\xi_d$ is a deterministic measure on $X$ without atoms; $\xi_f$ is a random measure with fixed (non-random) atoms, that is there exists a deterministic countable collection of points $\{x_i\}$ in $X$ and non-negative independent random numbers $\{a_i\}$ with $\xi_f=\sum_i a_i\delta_{x_i}$; finally the most essential part $\xi_r$ is an extended marked Poisson process which has no fixed atoms, in particular with probability one $\xi_r$ is of the form $\sum_{j}b_j\delta_{y_j}$,
 where $\{b_j\}$ are non-negative random numbers and $\{y_j\}$ are random points in $X$.

Thus, by Kingman's result a completely random measure is a random discrete measure up to a non-random component. If one drops the assumption that the random measure is completely random, one cannot expect anymore to concretely characterize the distribution of $\xi$. Thus, a natural appropriate question  is to ask when a random measure is a random discrete measure. One may be tempted to replace the assumption of complete randomness by a property of a sufficiently strong decay of correlation. However, the result of this paper shows that such an  assumption cannot be sufficient.

Note that, in most interesting examples of completely random measures, the set of atoms of the random discrete measure is almost surely  dense in  $X$. A  study of countable dense random subsets of $X$ leads to  ``situations in which probabilistic statements about such sets can be uninformative'' \cite{Kendall}, see also \cite{AB}. It is the presence of the weights $s_i$ in the definition of a random discrete measure that makes a real difference.

 An important characteristic of a random measure is its moment sequence.   We say that a random measure $\xi$ has finite moments of all orders if, for each $n\in\mathbb N$ and all bounded subsets $A \in \mathcal{B}(X)$,
 \[
 \mathbb{E}[\xi(A)^n] < \infty .
 \]
Then, the {\it $n$-th moment measure of $\xi$} is the unique symmetric measure $M^{(n)} \in\mathbb{M}(X^n)$ defined by the following relation
\[
\forall A_1, \ldots A_n \in \mathcal{B}(X) : \qquad M^{(n)}(A_1 \times \dots \times A_n) :=\mathbb{E}[\xi(A_1) \dotsm \xi(A_n)].
\]
 We also set $M^{(0)}:=\mathbb E(1)=1$. The $(M^{(n)})_{n=0}^\infty$ is called the {\it moment sequence of the random measure $\xi$.}

The main result  of this paper is a solution of the following problem: {\it Assume that $\xi$ is a random measure on $X$ whose moment sequence $(M^{(n)})_{n=0}^\infty$ is known and satisfies a mild {\it a priori\/} bound. Give a necessary and sufficient condition, in terms of the moments $(M^{(n)})_{n=0}^\infty$\,, for  $\xi$ to be   a random discrete measure, i.e., for the distribution of the random measure $\xi$ to be  concentrated on $\mathbb K(X)$.}

As a consequence of our main result we also obtain a solution of the (infinite dimensional) moment problem on $\mathbb K(X)$. Since we will only use the distribution of
 a random measure on $\mathbb M(X)$, in what follows, under a random measure we will always understand a probability measure on $\mathbb M(X)$, and under a random discrete measure a probability measure concentrated on the subset $\mathbb K(X)$.

In Section~\ref{secmoment} we state three corollaries of the main result. In Corollary~\ref{sec4cor1}, 
{\it we give  a necessary and sufficient condition for a sequence of Radon measures, $(M^{(n)})_{n=0}^\infty$\,, to be the moment sequence of a random discrete measure}, cf.\ Section~\ref{secmoment} for details.
In Corollary~\ref{hfyrd6edhgyhg}, {\it we give a necessary and sufficient condition, in terms of the moments $(M^{(n)})_{n=0}^\infty$, for a random measure  to be a simple point process.} In Corollary~\ref{huigtt}, we relate the previous corollary to the analogous result in terms of the so-called  {\it generalized correlation functions}.
(These  results are also conditional upon an {\it a priori\/} bound satisfied by $(M^{(n)})_{n=0}^\infty$.)
 
 Our main result is very different in spirit and technique to the known results about the localization of measures on cones. As far as we know, all known techniques require the cone under consideration to be closed, cf.\ \cite{S}, but  $\mathbb K(X)$ is dense in  $\mathbb M(X)$, cf.\ the proof of separability in Proposition~A2.5.III in \cite{DVJ1}.

In order to describe the main result more precisely we have to introduce some further notation. Let $i_1,\ldots i_k \in \mathbb{N}$ with $i_1 + \dots +i_k=n$. Denote by $M_{i_1, \ldots , i_k}$ the restriction of $M^{(n)}$ to the following subset of $X^n$
\[
\left\{ \big(\underbrace{x_1, \ldots, x_1}_{i_1}, x_2, \ldots , \underbrace{x_k, \ldots , x_k}_{i_k}\,\big) \in X^n \,\Big|\, x_i \neq x_j \mbox{ for } i \neq j\right\}.
\]
Denote by $X^{(k)}_{\widehat0}$ the collection of points $(x_1,\dots,x_k)\in X^k$ whose coordinates are all  different.  We consider $M_{i_1, \ldots , i_k}$ as a measure on $X^{(k)}_{\widehat0}$, cf.\ Section~\ref{5tew5w} for details.

It is clear that a result of the type we wish to derive can only hold under an appropriate estimate on the growth of the measures $M^{(n)}$. Below we will assume that the following conditions are satisfied, see also Remark~\ref{rmc2}:

\begin{itemize}
\item[(C1)] For each $\Lambda\in\mathcal B_c(X)$, there exists a constant $C_\Lambda>0$ such that
\begin{equation}\label{hydyrk}
M^{(n)}(\Lambda^n)\le C_\Lambda^n\,n!\,,\quad n\in\mathbb N.\end{equation}
\end{itemize}
Here $\mathcal B_c(X)$ denotes the collection of all sets from $\mathcal B(X)$ which have compact closure.
\begin{itemize}
\item[(C2)] For each $\Lambda\in\mathcal B_c(X)$, there exists a constant $C_\Lambda'>0$ such that
\begin{equation}\label{lhgigikbgi}
M^{(n)}(\Lambda^{(n)}_{\widehat0})\le (C_\Lambda')^nn!\,,\quad\forall n \in \mathbb{N}
\end{equation}
and for any sequence $\{\Lambda_k\}_{k=1}^\infty\in\mathcal B_c(X)$ such that $\Lambda_k\downarrow\varnothing$, we have $C_{\Lambda_k}'\to0$ as $k\to\infty$.
\end{itemize}

We fix a sequence $(\Lambda_l)_{l=1}^\infty$ of compact subsets of $X$ such that $\Lambda_1\subset\Lambda_2\subset\Lambda_3\subset\dotsm$ and $\bigcup_{l=1}^\infty\Lambda_l=X$. For example, in the case $X=\R^d$, one may choose $\Lambda_l=[-l,l]^d$.

\begin{theorem}\label{jkgfrt7urd}
Let $\mu$ be a random measure on $X$, i.e., a probability measure on\linebreak $(\mathbb M(X),\mathcal B(\mathbb M(X)))$. Assume that $\mu$ has finite moments,
and let $(M^{(n)})_{n=0}^\infty$ be its moment sequence.
Further assume that conditions {\rm (C1)} and {\rm (C2)} are
satisfied.
Then $\mu$ is a random discrete measure, i.e., $\mu(\mathbb K(X))=1$, if and only if the moment sequence $(M^{(n)})_{n=0}^\infty$ satisfies the following conditions:

\begin{itemize}

\item[(i)] For any $n\in\mathbb N$, $\Delta\in\mathcal B_c(X^{(n)}_{\widehat0})$, and $\mathbf i=(i_1,\dots,i_n)\in\mathbb Z_+^n$,
let \begin{equation}\xi^\Delta_{\mathbf i}=\xi^\Delta_{i_1,\dots,i_n}:=\frac1{n!}\,M_{i_1+1,\dots,i_n+1}(\Delta). \label{ude7re7uvt}\end{equation} cf.\ for more details \eqref{ude7re7u}. Here $\mathbb Z_+:=\mathbb N\cup\{0\}$.

Then the sequence
 $(\xi^\Delta_{\mathbf i})_{\mathbf i\in\mathbb Z_+^n}$ is positive definite, i.e., for $N\in\mathbb N$ and any finite sequence of complex numbers indexed by elements of $\mathbb Z_+^n$,  $(z_{\mathbf i})_{\mathbf i\in\mathbb Z_+^n,\, |\mathbf i|\le N}$, we have
 $$\sum_{\substack{i_1, \ldots , i_n = 1\\
 j_1, \ldots , j_n = 1}}^N \xi^\Delta_{i_1+j_1, \ldots , i_n+j_n}\, z_{i_1,\dots,i_n}\overline{z_{j_1,\dots,j_n}}\ge0.$$
Here $|\mathbf i|:=\max\{i_1,\dots,i_n\}$.

 \item[(ii)]
   For each $\Delta\in\mathcal B_c(X^{(n)}_{\widehat0})$ of the form $\Delta=(\Lambda_l)^{(n)}_{\widehat 0}$ with $l\in\mathbb N$, set
 \begin{equation}\label{yufu8r86orftf}
 r_i^\Delta:=\xi^\Delta_{i,0,0,\dots,0}\,,\quad i\in\mathbb Z_+.\end{equation}
  Then, for any finite sequence of complex numbers, $(z_n)_{n=0}^N$, we have
     \begin{equation}\label{bkjgtfi8ytf8gi}\sum_{i,j=0}^N r^\Delta_{i+j+1}\, z_i\,\overline{z_j}\ge0,
\end{equation}
and furthermore
\begin{equation}\label{ug86r8o7}\sum_{k=1}^\infty (D^\Delta_{k-1}D^\Delta_k)^{-1}\operatorname{det}\left[\begin{matrix}
r^\Delta_1&r^\Delta_2&\dots&r^\Delta_k\\
r^\Delta_2&r^\Delta_3&\dots&r^\Delta_{k+1}\\
\vdots&\vdots&\vdots&\vdots\\
r^\Delta_k&r^\Delta_{k+1}&\dots&r^\Delta_{2k-1}
\end{matrix}
\right]^2=\infty,
\end{equation}
where
$$ D_k:= \operatorname{det}\left[\begin{matrix}
r^\Delta_0&r^\Delta_1&\dots&r^\Delta_k\\
r^\Delta_1&r^\Delta_2&\dots&r^\Delta_{k+1}\\
\vdots&\vdots&\vdots&\vdots\\
r^\Delta_k&r^\Delta_{k+1}&\dots&r^\Delta_{2k}
\end{matrix}
\right],\quad k\in\mathbb Z_+.
$$
\end{itemize}

\end{theorem}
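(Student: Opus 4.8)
The plan is to first pin down the probabilistic meaning of the diagonal restrictions $M_{i_1,\dots,i_n}$ and of the quantities $\xi^\Delta_{\mathbf i}$ defined in \eqref{ude7re7uvt}. Writing the generic realization as $\eta=\eta_d+\eta_c$, with $\eta_d=\sum_j s_j\delta_{x_j}$ its atomic part and $\eta_c$ its diffuse part, the crucial observation is that a group of coinciding coordinates of size $\ge2$ can only be fed by an atom: on the full diagonal $\{x_1=\dots=x_{i+1}\}$ with $i\ge1$, every mixed term containing a factor $\eta_c$ vanishes, since a diffuse measure assigns no mass to a fixed point, and $\eta_c^{\otimes(i+1)}$ gives no mass to the diagonal. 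Hence for $i\ge1$ one has $M_{i+1}(\Delta)=\mathbb E[\sum_j s_j^{i+1}\mathbf 1_{x_j\in\Delta}]$, while only $M_1(\Delta)=\mathbb E[\eta(\Delta)]$ still feels the diffuse part. More generally, $\xi^\Delta_{\mathbf i}$ is, up to the symmetry factor $1/n!$, the mean over $n$-tuples of distinct atoms of $s_{j_1}^{i_1+1}\dotsm s_{j_n}^{i_n+1}$, i.e.\ the $\mathbf i$-th moment of a genuine positive measure $\rho^\Delta$ concentrated on $(0,\infty)^n$.

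For the necessity direction I assume $\mu(\mathbb K(X))=1$, so $\eta_c=0$, and the identification above exhibits $(\xi^\Delta_{\mathbf i})_{\mathbf i}$ as the moment sequence of the positive measure $\rho^\Delta$; positive definiteness (i) is then the standard necessary condition for a multivariate moment sequence. For (ii), fix $n$ and $\Delta=(\Lambda_l)^{(n)}_{\widehat0}$ and consider $r^\Delta_i=\xi^\Delta_{i,0,\dots,0}$ from \eqref{yufu8r86orftf}. Setting $\sigma^\Delta(ds):=s\,\tilde\nu^\Delta(ds)$ on $(0,\infty)$, where $\tilde\nu^\Delta$ is the mean weight-distribution of the first atom weighted by the configuration of the remaining $n-1$ distinct points, one checks that $r^\Delta_i=\int_{[0,\infty)}s^i\,\sigma^\Delta(ds)$ for all $i\ge0$, the discrepancy at $i=0$ being absorbed by a possible atom of $\sigma^\Delta$ at the origin. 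Thus $(r^\Delta_i)$ is a Stieltjes sequence, which gives the shifted positive definiteness \eqref{bkjgtfi8ytf8gi}. Because $\mu$ is discrete the diffuse contribution is absent, so $\sigma^\Delta(\{0\})=0$. I then convert this into \eqref{ug86r8o7} through the orthonormal polynomials $P_k$ of $\sigma^\Delta$: the Christoffel-function identity gives $\sigma^\Delta(\{0\})=(\sum_{k\ge0}P_k(0)^2)^{-1}$, read as $0$ when the series diverges, while expanding $P_k(0)$ along its constant row yields $P_k(0)^2=(D^{\Delta,(1)}_k)^2/(D^\Delta_{k-1}D^\Delta_k)$, where $D^{\Delta,(1)}_k$ is exactly the shifted Hankel determinant in the numerator of \eqref{ug86r8o7}. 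Hence $\sigma^\Delta(\{0\})=0$ is equivalent to the divergence \eqref{ug86r8o7}.

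For the sufficiency direction I would argue that already the case $n=1$ of (ii) forces discreteness. For $n=1$ the sequence $r^{\Lambda_l}_i=M_{i+1}(\Lambda_l)$ is, by the a priori bound \eqref{hydyrk}, controlled by $C_{\Lambda_l}^{\,i+1}(i+1)!$, so Carleman's condition holds and the Stieltjes moment problem for $(r^{\Lambda_l}_i)$ is determinate. Consequently the measure $s\,\nu^{\Lambda_l}(ds)+\mathbb E[\eta_c(\Lambda_l)]\,\delta_0$, with $\nu^{\Lambda_l}=\mathbb E[\sum_j\mathbf 1_{x_j\in\Lambda_l}\delta_{s_j}]$, is the unique representing measure $\sigma^{\Lambda_l}$, and its mass at the origin equals the mean diffuse mass $\mathbb E[\eta_c(\Lambda_l)]$. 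Invoking the orthogonal-polynomial identity in reverse, the divergence \eqref{ug86r8o7} yields $\sigma^{\Lambda_l}(\{0\})=0$, i.e.\ $\mathbb E[\eta_c(\Lambda_l)]=0$, hence $\eta_c(\Lambda_l)=0$ almost surely. Letting $l\to\infty$ along $\Lambda_l\uparrow X$ gives $\eta_c=0$ almost surely, so $\mu$ is concentrated on $\mathbb K(X)$. The remaining content of (i) and of (ii) for $n\ge2$ is then recovered from the necessity direction, which is what makes the statement an equivalence and, more importantly, supplies the data needed for the moment-reconstruction corollaries.

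I expect the main obstacle to be the clean passage between the analytic condition \eqref{ug86r8o7} and the geometric statement $\sigma^\Delta(\{0\})=0$. This rests on two classical but delicate ingredients: the determinacy of the one-dimensional moment problem, so that the measure built from atoms is genuinely the unique representing measure, which I obtain from \eqref{hydyrk} via Carleman's test; and the Christoffel-function evaluation of the point mass together with the Hankel-determinant formula for $P_k(0)^2$. Care is also needed to justify the measurable atomic--diffuse decomposition $\eta=\eta_d+\eta_c$ and the exact bookkeeping of which group sizes see the diffuse part; the estimate \eqref{lhgigikbgi} with $C'_{\Lambda_k}\to0$ enters here to guarantee local finiteness and the vanishing of the boundary contributions. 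For the corollaries, where $\mu$ must be constructed rather than merely tested, the multivariate positive definiteness (i) together with the growth bounds provides a determinate multivariate moment problem whose solutions $\rho^\Delta$ assemble, in Lenard's style, into a marked point process whose marks are positive precisely because of \eqref{ug86r8o7}.
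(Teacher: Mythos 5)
Your necessity argument is essentially the paper's: you identify $(\xi^\Delta_{\mathbf i})$ as the moment sequence of a genuine finite measure on $(0,\infty)^n$ (the paper's $\xi^{(n)}_\Delta$, obtained from the correlation measure of the lifted point process $\nu=\mathcal E(\mu)$), deduce (i) from positive definiteness of multivariate moment sequences, and deduce (ii) from the Stieltjes property of the first marginal together with the Akhiezer/Christoffel identity $\sigma(\{0\})=\bigl(\sum_k P_k(0)^2\bigr)^{-1}$ in the determinate case — exactly the citations the paper uses. Your sufficiency argument, however, is genuinely different and noticeably more elementary. The paper does \emph{not} decompose the realizations of the given $\mu$; instead it solves the multivariate moment problems for $\xi^{(n)}_\Delta$, assembles the resulting $\rho^{(n)}=(s_1\dotsm s_n)^{-1}\xi^{(n)}$ into a candidate correlation measure on $\Gamma_0(X\times\R_+)$, verifies the Lytvynov--Mei conditions (LB) (this is where (C2) enters) and (PD) (via the $\star$/$\diamond$-product combinatorics and an $L^2$-approximation by polynomials), obtains a point process $\nu$, pulls it back to a discrete random measure $\mu'$, and finally identifies $\mu=\mu'$ by Carleman determinacy of the joint distributions. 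You instead work directly with the given $\mu$: writing $\eta=\eta_d+\eta_c$, you observe that for $i\ge1$ the full-diagonal restriction satisfies $M_{i+1}(\Lambda)=\mathbb E\bigl[\eta^{\otimes(i+1)}(\Lambda^{(i+1)}_{\widehat1})\bigr]=\mathbb E\bigl[\sum_j s_j^{i+1}\chi_\Lambda(x_j)\bigr]$ while only $r_0^\Lambda=M^{(1)}(\Lambda)$ sees $\eta_c$, so $(r_i^{\Lambda_l})$ is represented by $s\,\nu^{\Lambda_l}(ds)+\mathbb E[\eta_c(\Lambda_l)]\,\delta_0$; determinacy plus \eqref{ug86r8o7} then forces $\mathbb E[\eta_c(\Lambda_l)]=0$. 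This is correct and shows that, when $\mu$ is given \emph{a priori}, the $n=1$ case of (ii) alone suffices for the ``if'' direction, with neither (i) nor (C2) needed there — a genuine simplification the paper does not exploit. What the paper's heavier route buys is precisely what you concede at the end: the existence half of Corollary~\ref{sec4cor1}, where no $\mu$ is available to decompose and the point process must be constructed from the moment data.

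Two points in your argument need to be made honest rather than merely plausible. First, the identity $M^{(n)}(B)=\mathbb E[\eta^{\otimes n}(B)]$ must be extended from product sets to general Borel $B$ (a monotone-class argument using (C1) for local finiteness) before you may evaluate $M^{(i+1)}$ on the diagonal; and the measurability of $\eta\mapsto\eta_d(\Lambda)$ (hence of $\eta_c(\Lambda)$ and of the mark measure $\nu^{\Lambda}$) must be justified — e.g.\ via $\eta_d(\Lambda)=\lim_{m\to\infty}\int_\Lambda 1\wedge\bigl(m\,\eta(\{x\})\bigr)\,d\eta(x)$ or by citing the measurable atomic decomposition in Kallenberg. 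Second, both you and the paper tacitly assume the Hankel determinants $D_k^\Delta$ are nonzero so that \eqref{ug86r8o7} is well posed; in the degenerate case (finitely supported $\sigma^\Delta$) the Christoffel sum must be interpreted as the finite sum over the existing orthonormal polynomials. Neither issue undermines the approach.
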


Let us now briefly describe the strategy we  follow in this paper.
Denote $ \R_+:=(0,\infty)$. We introduce a logarithmic metric on $ \R_+$: for $a,b\in \R_+$,
$\operatorname{dist}(a,b):=\left|\ln\left(\frac ab\right)\right|$. Then $ \R_+$  becomes a locally compact Polish space. Thus, $Y:=X\times  \R_+$ is also a locally compact Polish space. We consider the configuration space $\Gamma(Y)$, i.e., the space of all locally finite subsets of $Y$. This space is also equipped with the vague topology. A {\it (simple) point process} in $Y$ is a probability measure on $(\Gamma(Y),\mathcal B(\Gamma(Y)))$.
All processes which we consider in this paper are simple, so any time we mention a point process, we will actually mean a simple point process.
A point process is  (uniquely) characterized by its {\it correlation measure\/}  (also called the {\it factorial moments measures\/}), see e.g.\ \cite{DVJ1}.

Let $\mu$ be a random discrete measure on $X$. It is often  convenient to interpret $\mu$ as a point process in $Y$. More precisely, take any discrete Radon measure $\eta=\sum_i s_i\delta_{x_i}\in\mathbb K(X)$ and set
$$\mathcal E\eta:=\sum_i \delta_{(x_i,s_i)}.$$
As easily seen $\mathcal E\eta\in \Gamma(Y)$. Furthermore, it can be shown that the mapping $\mathcal E:\mathbb K(X)\to\Gamma(Y)$ is measurable,
see \cite{HKPR}.
 (Note, however, that the range of the mapping $\mathcal E$ is not the whole space $\Gamma(Y)$, see the definition in equation (\ref{fdstrset6s}) below.) We denote $\nu:=\mathcal E(\mu)$, i.e., the pushforward of $\mu$ under $\mathcal E$. Thus, $\nu$ is a point process in $Y$.
Hence, one can study
  the random discrete measure $\mu$ through the point process $\nu$.

Our strategy to solve the main problem is to first construct the point process $\nu$ associated to the searched random discrete measure $\mu$. An important step along the way here is to solve the following problem, which is of independent interest in itself: {\it How can one recover the correlation measure   of the associated point process $\nu$ from  the moment sequence  $(M^{(n)})_{n=0}^\infty$ of a random discrete measure $\mu$?}
A solution to this problem is given in Section~\ref{5tew5w}. Our approach is significantly influenced  by the paper of Rota and Wallstrom \cite{RW}, which combines ideas of (stochastic) integration  and combinatorics. Additionally, to find the correlation measure of $\nu$ concretely, one has to solve a sequence of finite-dimensional moment problems.
A solution to the main problem is given in Section~\ref{hftuttfd7u} and the consequence for the moment problem on $\mathbb{K}(X)$ is discussed in Section~\ref{secmoment}.

Beside completely random measures or additive subordinators in the case $X=\mathbb{R}_+$ (in particular,  L\'evy processes which are subordinators),  we would like to mention   the gamma measure, the spatial version of the gamma process, see e.g.\ \cite{VGG1,VGG2,TsVY} for interesting properties. The gamma measure is the completely random discrete measure $\mu$ on $X=\R^d$ for which  $\mathcal E(\mu)=\nu$ is the Poisson point process  in $\R^d\times \R_+$ with intensity measure $dx\, s^{-1}e^{-s}\,ds$. Note that Gibbs perturbations of the gamma measure have been studied in \cite{HKPR}. These are also random discrete measures which have a.s.\ a dense set of atoms.

\section{Recovering the correlation measure of $\nu$}\label{5tew5w}

A partition of a nonempty set $Z$ is any finite collection $\pi=\{A_1,\dots,A_k\}$, where $A_1,\dots,A_k$ are mutually disjoint nonempty subsets of $Z$ such that $Z=\bigcup_{i=1}^k A_i$.
 The sets $A_1,\dots,A_k$ are called blocks of the partition $\pi$.

 For each $n\in\mathbb N$, denote by $\Pi(n)$ the set of all partitions of the set $\{1,2,\dots,n\}$. For each partition $\pi=\{A_1,\dots,A_k\}\in\Pi(n)$, we denote by $X^{(n)}_\pi$ the subset of $X^n$ which consists of all $(x_1,\dots,x_n)\in X^n$ such that, for any $1\le i<j\le n$, $x_i=x_j$ if and only if $i$ and $j$ belong to the same block of the partition  $\pi$, say $A_l$. For example, for the so-called zero partition
 $\widehat0=\big\{\{1\},\{2\},\dots,\{n\}\big\}$, the set $X^{(n)}_{\widehat0}$ consists of all points $(x_1,\dots,x_n)\in X^n$ whose coordinates are all  different. For the so-called one partition $\widehat 1=\big\{
 \{1,2,\dots,n\}\big\}$, the set $X^{(n)}_{\widehat1}$ consists of all points $(x_1,\dots,x_n)\in X^n$ such that $x_1=x_2=\dots=x_n$. Clearly, the collection of sets $X_\pi^{(n)}$ with $\pi$ running over $\Pi(n)$ forms a partition of $X^n$.

 Let  $m^{(n)}$ be any nonnegative Radon measure on $X^n$, i.e., $m^{(n)}\in\mathbb M(X^n)$. For each partition $\pi\in\Pi(n)$, we denote by $m^{(n)}_\pi$ the restriction of the measure $m^{(n)}$ to the set $X^{(n)}_\pi$.
Note that we may also consider $m_\pi^{(n)}$ as a measure on $X^n$ by setting $$m_\pi^{(n)}(X^n\setminus X^{(n)}_\pi):=0.$$
Then we get
$$ m^{(n)}=\sum_{\pi\in\Pi(n)}m^{(n)}_\pi.$$

Let us fix a  partition $\pi=\{A_1,A_2,\dots,A_k\}\in \Pi(n)$.
 Here and below, we will always assume   that the blocks of the partition are enumerated so that
$$\min A_1<\min A_2<\dots<\min A_k.$$
We  denote by $|\pi|$ the number of blocks in the partition $\pi$. We  construct a measurable, bijective mapping, for $k=|\pi|$,
$$ B_\pi: X_\pi^{(n)}\to X^{(k)}_{\widehat0}$$
as follows.
For any $(x_1,\dots,x_n)\in X_\pi^{(n)}$, we set
$$ B_\pi(x_1,\dots,x_n)=(y_1,\dots,y_{k}),$$
where, for $i=1,2,\dots,k$, $y_i=x_j$ for a $j\in A_i$ (recall that $x_j=x_{j'}$ for all $j,j' \in A_i$).
Note that, if $\pi=\widehat0$, then $B_\pi$ is just the identity mapping. We denote by $B_\pi (m_\pi^{(n)})$ the pushforward of the measure $m_\pi^{(n)}$ under $B_\pi$.

Let us now additionally assume that the initial measure $m^{(n)}$ is symmetric, i.e., the measure $m^{(n)}$ remains invariant under the natural action of permutations $\sigma\in\mathfrak S_n$ on $X^n$. (Here $\mathfrak S_n$ denotes the symmetric group of degree $n$.)
 For a partition $\pi$ as in the above paragraph, we set, for each $l=1,2,\dots,k$, $i_l:=|A_l|$, the number of elements of the block $A_l$. Note that
$i_1+i_2+\dots+i_k=n$.
Since $m^{(n)}$ is symmetric, it is clear that the measure  $B_\pi (m_\pi^{(n)})$ is completely identified by the numbers $i_1,\dots,i_k$. That is, if $\pi'=\{A_1',\dots,A_k'\}$ is another partition from $\Pi(n)$ with
$|A'_l|=i_l$, $l=1,\dots,k$,
 then
$B_\pi (m_\pi^{(n)})=B_{\pi'} (m_{\pi'}^{(n)})$.
Hence, we will denote
\begin{equation}\label{ytdey6es6y}m_{i_1,\dots,i_k}:=B_\pi (m_\pi^{(n)}),\end{equation}
and we may assume, without loss of generality, that in formula \eqref{ytdey6es6y} the partition $\pi=\{A_1,\dots,A_k\}$ is given by
\begin{equation}\label{ufy7fr} A_1=\{1,\dots,i_1\},\ A_2=\{i_1+1,\dots,i_1+i_2\},\ A_3=\{i_1+i_2+1,\dots,i_1+i_2+i_3\},\dots\end{equation}
Note that, since  $m^{(n)}$ is a Radon measure on $X^n$,  each measure $m_{i_1,\dots,i_k}$ is a Radon measure on $X^{(k)}_{\widehat0}$, i.e., for each $\Delta\in\mathcal B_c(X^{(k)}_{\widehat0})$, we have $m_{i_1,\dots,i_k}(\Delta)<\infty$. Here $\mathcal B_c(X^{(k)}_{\widehat0})$ denotes the collection of all sets $\Delta\in \mathcal B(X^{(k)}_{\widehat0})$ which have a compact closure in $X^k$, and  $ \mathcal B(X^{(k)}_{\widehat0})$ is the trace $\sigma$-algebra of $\mathcal B(X^k)$ on $X^{(k)}_{\widehat0}$.
Thus,  a given sequence of symmetric Radon measures $m^{(n)}$ on $X^n$, $n\in\mathbb N$, uniquely identifies a sequence
of Radon measures $m_{i_1,\dots,i_k}$ on $X^{(k)}_{\widehat0}$, where $i_1,\dots,i_k\in\mathbb N$, $k\in\mathbb N$.
Note that this sequence is symmetric in the entries $i_1,\dots,i_k$, i.e., for any permutation $\sigma\in\mathfrak S_k$,
$$dm_{i_1,\dots,i_k}(x_1,\dots,x_k)=dm_{i_{\sigma(1)},\dots,i_{\sigma(k)}}(x_{\sigma(1)},\dots,x_{\sigma(k)}).$$
As easily seen the converse implication is also true, i.e., any sequence of Radon measures $m_{i_1,\dots,i_k}$ on $X^{(k)}_{\widehat0}$, with $i_1,\dots,i_k\in\mathbb N$ and $k\in\mathbb N$ which is symmetric in the entries $i_1,\dots,i_k$ uniquely identifies a sequence of symmetric Radon measures $m^{(n)}$ on $X^n$, $n\in\mathbb N$.

Let now $\mu$ be a random discrete measure on $X$ which has finite moments, and let $(M^{(n)})_{n=0}^\infty$ be its moment sequence. Clearly, each $M^{(n)}$ is a symmetric measure on $X^n$.
Below we will deal with the measures $M_{i_1,\dots,i_k}$ derived from  the moment sequence $(M^{(n)})_{n=0}^\infty$.

For each $n\in\mathbb N$, we denote by $C_0(X^n)$  the space of all continuous functions on $X^n$ with compact support equipped with the natural topology  of uniform convergence on compact sets from $X^n$. Clearly, for each $f^{(n)}\in C_0(X^n)$, the function
$$\mathbb M(X)\ni\eta\mapsto\langle \eta^{\otimes n}, f^{(n)}\rangle:=\int_{X^n} f^{(n)}(x_1,\dots,x_n)\,d\eta(x_1)\dotsm d\eta(x_n)$$
is measurable. By the dominated convergence theorem it also holds that
\begin{equation}\label{bvufutf}
\int_{X^n} f^{(n)}(x_1,\dots,x_n)\, dM^{(n)}(x_1,\dots,x_n)=\int_{\mathbb M(X)}\langle \eta^{\otimes n}, f^{(n)}\rangle\,d\mu(\eta).\end{equation}

Consider the locally compact Polish space $Y=X\times \R_+$ (see Introduction), and consider the configuration space $\Gamma(Y)$. Recall that
$$\Gamma(Y):=\{\gamma\subset Y\mid |\gamma\cap\Lambda|<\infty\text{ for each compact }\Lambda\subset Y\}.$$
Here, $|\gamma\cap\Lambda|$ denotes the number of points in the set $\gamma\cap\Lambda$. One usually identifies a configuration $\gamma=\{y_i\}\in\Gamma(Y)$ with a Radon measure $\gamma=\sum_{i}\delta_{y_i}$. Thus, we get the inclusion $\Gamma(Y)\subset\mathbb M(Y)$ and we denote by $\mathcal B(\Gamma(Y))$ the  trace $\sigma$-algebra of $\mathcal B(\mathbb M(Y))$ on $\Gamma(Y)$.

 Denote by $\Gamma_p(Y)$  the set of so-called {\it pinpointing configurations in  $Y$}. By definition, $\Gamma_p(Y)$ consists of all configurations $\gamma\in\Gamma(Y)$ such that if $(x_1,s_1),(x_2,s_2)\in\gamma$ and $(x_1,s_1)\ne(x_2,s_2)$, then $x_1\ne x_2$. Thus, a configuration $\gamma\in \Gamma_p(Y)$ cannot contain two points $(x,s_1)$ and $(x,s_2)$ with $s_1\ne s_2$.
For each $\gamma\in\Gamma_p(Y)$ and $\Lambda\in\mathcal B_c(X)$, we define a {\it local mass} by
\begin{equation}\label{ddddydyyd}
\mathfrak{M}_\Lambda(\gamma):= \int_{Y}\chi_\Lambda(x)s\,d\gamma(x,s)=\sum_{(x,s)\in\gamma}\chi_\Lambda(x)s\in[0,\infty].\end{equation}
Here $\chi_\Lambda$ denotes the indicator function of the set $\Lambda$.
The set of {\it pinpointing configurations with finite local mass\/} is then defined by
\begin{equation}\label{fdstrset6s}
\Gamma_{pf}(Y):=\big\{\gamma\in \Gamma_p(Y)\mid \mathfrak{M}_\Lambda(\gamma)<\infty\text{ for each compact }\Lambda\subset X\big\}.\end{equation}
As easily seen, $\Gamma_{pf}(Y)\in\mathcal B(\Gamma(Y))$ and we denote by $\mathcal B(\Gamma_{pf}(Y))$ the trace $\sigma$-algebra of  $\mathcal B(\Gamma(Y))$ on $\Gamma_{pf}(Y)$.

We  construct a bijective mapping
$\mathcal E:\mathbb K(X)\to \Gamma_{pf}(Y)$ by setting, for each $\eta=\sum_i s_i\delta_{x_i}\in \mathbb K(X)$,
$\mathcal E\eta:=\{(x_i,s_i)\}$.
By \cite[Theorem~6.2]{HKPR}, we have
$$\mathcal B(\Gamma_{pf}(Y))=\left\{\mathcal EA\mid A\in\mathcal B(\mathbb K(X)\right\}.$$
Hence, both $\mathcal E$ and its inverse $\mathcal E^{-1}$ are measurable mappings.

We denote by $\nu:=\mathcal E(\mu)$ the pushforward of the measure $\mu$ under the mapping $\mathcal E$. Thus $\nu$ is a probability measure on $\Gamma_{pf}(Y)$, in particular, it is  a point process in $Y$.

Let $\Gamma_0(Y)$ denote the space of all finite configurations in $Y$:
$$\Gamma_0(Y):=\{\gamma\subset Y\mid |\gamma|<\infty\}.$$ Note that $\Gamma_0(Y)=\bigcup_{n=0}^\infty\Gamma^{(n)}(Y)$, where $\Gamma^{(n)}(Y)$ is the space of all $n$-point configurations (subsets) in $Y$. Clearly, $\Gamma_0(Y)\subset\Gamma(Y)$, and we denote by $\mathcal B(\Gamma_0(Y))$ the trace $\sigma$-algebra of $\mathcal B(\Gamma(Y))$ on $\Gamma_0(Y)$. The $\sigma$-algebra $\mathcal B(\Gamma_0(Y))$ admits  the following description: for each $n\in\mathbb N$, $\Gamma^{(n)}(Y)\in\mathcal B(\Gamma_0(Y))$ and the restriction of $\mathcal B(\Gamma_0(Y))$ to $\Gamma^{(n)}(Y)$ coincides (up to a natural isomorphism)
with the collection of all symmetric (i.e., invariant under the action of $\sigma\in\mathfrak S_n$) Borel-measurable subsets of $Y^{(n)}_{\widehat 0}$.
 The {\it correlation measure of the point process $\nu$} is
defined as the (unique) measure
$\rho$ on $(\Gamma_0(Y),\mathcal B(\Gamma_0(Y)))$ which satisfies
\begin{equation}\label{yuf7ur7ddddytr}\int_{\Gamma(Y)}\sum_{\lambda\Subset\gamma} G(\lambda)\,d\nu(\gamma)=\int_{\Gamma_0(Y)}G(\lambda)\,d\rho(\lambda) \end{equation}
for each measurable function $G:\Gamma_0(Y)\to[0,\infty]$. In formula \eqref{yuf7ur7ddddytr}, the summation $\sum_{\lambda\Subset\gamma}$ is over all finite subsets $\lambda$ of $\gamma$.

 For each $n\in\mathbb N$, we denote by $\rho^{(n)}$ the restriction of the measure $\rho$ to $\Gamma^{(n)}(Y)$. By \eqref{yuf7ur7ddddytr}, the measure $\rho^{(n)}$ can be identified with the symmetric measure on $Y^{(n)}_{\widehat0}$ which satisfies
\begin{align}
&\int_{\Gamma_{pf}(Y)}\sum_{\{(x_1,s_1),\dots,(x_n,s_n)\}\subset\gamma} f^{(n)}(x_1,s_1,\dots,x_n,s_n)\,d\nu(\gamma)\notag\\
&\quad=\int_{Y^{(n)}_{\widehat 0}}f^{(n)}(x_1,s_1,\dots,x_n,s_n)\,d\rho^{(n)}(x_1,s_1,\dots,x_n,s_n)\label{vuduy}
\end{align}
for each symmetric measurable function $f^{(n)}:Y^{(n)}_{\widehat 0}\to[0,\infty]$.
Since  $\nu(\Gamma_{p}(Y))=1$, the  measure $\rho^{(n)}$ is concentrated  on the smaller set
\begin{equation}\label{huyfr8ultrfi} \set_n :=\big\{(x_1,s_1,\dots,x_n,s_n)\in Y^n\mid (x_1,\dots,x_n)\in X^{(n)}_{\widehat0}\big\}.
\end{equation}

The following theorem gives a three-step way of recovering the correlation measure $\rho$ of the point process $\nu=\mathcal E(\mu)$ directly from the moment sequence $(M^{(n)})_{n=0}^\infty$.

\begin{theorem}\label{ufut7fruvfguqfd}
Let  $\mu$ be a random discrete measure on $X$ which has finite moments. Let $(M^{(n)})_{n=0}^\infty$ be the moment sequence of $\mu$,  and assume that condition  {\rm(C1)} is satisfied.

\begin{itemize}

\item[(i)] For each  $n\in\mathbb N$ and  $\Delta\in\mathcal B_c(X^{(n)}_{\widehat0})$,
there exists a unique finite measure $\xi_\Delta^{(n)}$ on $( \R_+)^n$ which solves the moment problem
\begin{equation}\label{fstrst6e6}
\int_{( \R_+)^n} s_1^{i_1}\dotsm s_n^{i_n}\, d\xi^{(n)}_\Delta(s_1,\dots,s_n)=\frac1{n!}\,
M_{i_1+1,\dots,i_n+1}(\Delta),\quad (i_1,\dots,i_n)\in\mathbb Z_+^n. \end{equation}

\item[(ii)] For each  $n\in\mathbb N$,
there exists a unique  measure $\xi^{(n)}$ on $\set_n$ which satisfies
\begin{equation}\label{ufru8} \xi_\Delta^{(n)}(A)=\int_{\set_n}\chi_\Delta(x_1,\dots,x_n)\chi_{A}(s_1,\dots,s_n)\,d\xi^{(n)}
(x_1,s_1,\dots,x_n,s_n)\end{equation}
for all   $\Delta\in\mathcal B_c(X^{(n)}_{\widehat0})$ and $A\in\mathcal B(( \R_+)^n)$.

\item[(iii)] For each  $n\in\mathbb N$, let $\rho^{(n)}$ be the measure on $\set_n$ given by
\begin{equation}\label{uyr76or8o}d\rho^{(n)}(x_1,s_1,\dots,x_n,s_n):=(s_1\dotsm s_n)^{-1} \,d\xi^{(n)}(x_1,s_1,\dots,x_n,s_n).\end{equation}
Then $\rho^{(n)}$ is the restriction of the correlation measure $\rho$ of the point process $\nu=\mathcal E(\mu)$ to $\Gamma^{(n)}(X)$.

\end{itemize}

\end{theorem}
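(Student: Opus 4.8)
The plan is to reduce all three statements to a single master identity relating the restricted moment measures $M_{i_1,\dots,i_n}$ to the correlation measure $\rho$ of the point process $\nu=\mathcal E(\mu)$, which exists precisely because $\mu$ is assumed to be a random discrete measure. Starting from \eqref{bvufutf}, I would expand $\langle\eta^{\otimes n},f^{(n)}\rangle$ for $\eta=\sum_i s_i\delta_{x_i}$ as a sum over all $n$-tuples of atom indices and group the terms according to the coincidence partition $\pi\in\Pi(n)$ that each tuple induces, in the spirit of Rota--Wallstrom. Since the atoms $x_i$ are distinct, coincidences of $x$-coordinates are exactly coincidences of indices, so the part of $M^{(n)}$ living on $X^{(n)}_\pi$ is the $\mu$-expectation of the sum over \emph{distinct} index tuples with weights $\prod_l s_{j_l}^{|A_l|}$. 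Pushing forward by $B_\pi$ and using the symmetry of $M^{(n)}$ to pass to the canonical block sizes would yield, for each $\Delta\in\mathcal B_c(X^{(n)}_{\widehat0})$,
\[
M_{i_1+1,\dots,i_n+1}(\Delta)=n!\int_{\set_n}\chi_\Delta(x_1,\dots,x_n)\,s_1^{i_1+1}\dotsm s_n^{i_n+1}\,d\rho^{(n)}(x_1,s_1,\dots,x_n,s_n),
\]
where the factor $n!$ arises from converting between sums over ordered distinct tuples and sums over unordered subsets in \eqref{vuduy}. This identity is the conceptual heart of the recovery, and getting the partition bookkeeping and the $n!$ exactly right is the first care-point.

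Granting the master identity, the existence parts are then essentially definitional. I would set $d\xi^{(n)}:=(s_1\dotsm s_n)\,d\rho^{(n)}$ on $\set_n$ and let $\xi^{(n)}_\Delta$ be its projection onto $(\R_+)^n$ over $\Delta$, i.e.\ \eqref{ufru8}. Taking all exponents equal to $0$ in the identity shows that $\xi^{(n)}_\Delta$ has total mass $\tfrac1{n!}M_{1,\dots,1}(\Delta)\le\tfrac1{n!}M^{(n)}(\overline\Delta)<\infty$, finite because $M^{(n)}$ is Radon and $\Delta$ is relatively compact; the identity for general exponents is exactly the moment equation \eqref{fstrst6e6}, giving existence in (i). By construction $\xi^{(n)}$ satisfies \eqref{ufru8}, giving existence in (ii), and then \eqref{uyr76or8o} together with the identification of $\rho^{(n)}$ as the restriction of the correlation measure of $\nu$ yields (iii) immediately.

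The substantive work, and the main obstacle, is uniqueness, and this is where condition (C1) is indispensable. For (ii), two measures on $\set_n$ agreeing on all product sets $\Delta\times A$ agree on the generated $\sigma$-algebra by a $\pi$--$\lambda$ argument, after covering $X^{(n)}_{\widehat0}$ by countably many relatively compact sets to secure $\sigma$-finiteness (the sets $\Delta\times A$ form a $\pi$-system and each carries finite mass). For (i), uniqueness is the determinacy of the multidimensional Stieltjes moment problem on $(\R_+)^n$. I would bound the moments using (C1): if $\overline\Delta\subset\Lambda^n$ with $\Lambda$ compact, then $M_{i_1+1,\dots,i_n+1}(\Delta)\le M^{(N)}(\Lambda^N)\le C_\Lambda^N\,N!$ with $N=n+i_1+\dots+i_n$, so each one-dimensional marginal moment sequence grows no faster than $\mathrm{const}\cdot C^k k!$ and hence satisfies Carleman's condition $\sum_k m_k^{-1/(2k)}=\infty$. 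Invoking the classical fact that a multidimensional moment problem is determinate once Carleman's condition holds in each variable separately, I would conclude that $\xi^{(n)}_\Delta$ is the unique solution of \eqref{fstrst6e6}. Thus the only genuinely analytic input is the determinacy argument, which is exactly where the \emph{a priori} bound (C1) is used; everything else is combinatorial bookkeeping and standard measure theory.
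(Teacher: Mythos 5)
Your proposal is correct and follows essentially the same route as the paper: the master identity you state is exactly the paper's relation \eqref{qg8yodw}/\eqref{kigfti8ylt} (derived there via the lifting operator $\mathcal R_{i_1,\dots,i_n}$ and the correlation-measure identity \eqref{dfqytd}, which is the same partition bookkeeping you describe), existence of $\xi^{(n)}$ and $\xi^{(n)}_\Delta$ is definitional via $d\xi^{(n)}=s_1\dotsm s_n\,d\rho^{(n)}$, and uniqueness rests on a Carleman-type determinacy bound from (C1). The only cosmetic difference is that you bound $\xi^{\Delta}_{\mathbf i}$ directly by $\frac1{n!}M^{(N)}(\Lambda^N)\le \frac1{n!}C_\Lambda^N N!$ instead of going through the paper's sharper combinatorial Lemma~\ref{lhgiuty9}, but both reduce to the same estimate \eqref{gyit9i78t5798pt5r86t} that the determinacy argument actually uses.
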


\begin{remark} Note that, by the definition of a correlation measure, one always has $\rho(\varnothing)=1$.
Note also that $\rho^{(n)}$ is related to $(M^{(n)})_{n=0}^\infty$ via a moment problem, because, as shown in the proof, the following relation holds, for any  measurable function $g^{(n)}: X^{(n)}_{\widehat 0}\to[0,\infty]$,
\begin{align}
&\int_{\set_n}g^{(n)}(x_1,\dots,x_n)s_1^{i_1}\dots, s_n^{i_n}\,d\rho^{(n)}(x_1,s_1,\dots,x_n,s_n).\notag\\
&\quad=\frac1{n!}\int_{X^{(n)}_{\widehat 0}}g^{(n)}(x_1,\dots,x_n)\,dM_{i_1,\dots,i_n}(x_1,\dots,x_n).\label{qg8yodw}
\end{align}

\end{remark}

\begin{proof}[Proof of Theorem \ref{ufut7fruvfguqfd}] We start the proof with  derivation of the following bound.

\begin{lemma}\label{lhgiuty9} Assume that, for each $n\in\mathbb N$, $m^{(n)}$ is a symmetric measure on $X^n$. Assume that, for each $\Lambda\in\mathcal B_c(X)$, there exists a constant $C_\Lambda>0$ such that
$m^{(n)}(\Lambda^n)\le C_\Lambda^n\,n!$ for all $n\in\mathbb N$. Then, for any $i_1,\dots,i_n\in\mathbb N$, $n\in\mathbb N$, and $\Lambda\in\mathcal B_c(X)$,
$$ \frac1{n!}\, m_{i_1,\dots,i_n}(\Lambda_{\widehat 0}^{(n)})\le i_1!\dotsm i_n!\, C_\Lambda^{i_1+\dots+i_n}.$$
\end{lemma}

\begin{proof} Fix any $i_1,\dots,i_n\in\mathbb N$ and $\Lambda\in\mathcal B_c(X)$. Abbreviate $I := i_1+\dots+i_n$. Let $\pi=\{A_1,\dots,A_n\}\in\Pi(I)$ be  as in \eqref{ufy7fr}. By the construction of the measure $m_{i_1,\dots,i_n}$, we get
\begin{align}
&m_{i_1,\dots,i_n}(\Lambda_{\widehat 0}^{(n)})\notag\\
&\quad=\int_{X_\pi^{(I)}}\chi_{\Lambda^n}(x_1,x_{i_1+1},\dots,x_{i_1+\dots+i_{n-1}+1})\,
dm^{(I)}(x_1,\dots, x_{I})\notag\\
&\quad=\int_{X^{I}}
\chi_{\Lambda^{I}\cap X_\pi^{(I)}}(x_1,\dots,x_{I})\,
dm^{(I)}(x_1,\dots, x_{I})\notag\\
&\quad=\int_{X^{I}} \chi_{\Lambda_\pi^{(I)}}\,
dm^{(I)}\notag\\
&\quad=\int_{X^{I}} \operatorname{Sym}_{I}\chi_{\Lambda_\pi^{(I)}}\,
dm^{(I)}.
\label{hude7k}\end{align}
Here, for a function $f^{(k)}:X^k\to\mathbb R$, $\operatorname{Sym}_k f^{(k)}$ denotes its symmetrization:
$$ (\operatorname{Sym}_k f^{(k)})(x_1,\dots,x_k):=\sum_{\sigma\in\mathfrak S_k}\frac1{k!}\, f(x_{\sigma(1)},\dots,x_{\sigma(k)}).$$

Let $\psi\in\Pi(I)$ be a partition having exactly $n$ blocks:
$$\psi=\{B_1,\dots,B_n\}.$$
 Set $j_l:=|B_l|$, $l=1,\dots,n$.
Denote by $\Psi_{i_1,\dots,i_n}$ the set of all such partitions $\psi$ which satisfy
$$(i_1,\dots,i_n)=(j_{\sigma(1)},\dots, j_{\sigma(n)})$$
for some permutation $\sigma\in\mathfrak S_n$. An easy combinatoric argument shows that the number $N_{i_1,\dots,i_n}$ of all partitions in $\Psi_{i_1,\dots,i_n}$ is equal to
\begin{equation}\label{eqfactorcombipartition}
N_{i_1,\dots,i_n}=\frac{I!}{i_1!\,\dotsm\, i_n!\,r_1!\,r_2!\,r_3!\dotsm}\,. \end{equation}
Here for $l=1,2,3,\dots$, $r_l$ denotes the number of coordinates in the vector $(i_1,i_2,\dots,i_n)$ which are equal $l$.
In particular,
$$r_1+r_2+r_3+\dotsm=n,$$
which implies
$$r_1!\,r_2!\,r_3!\dotsm\le n!\,.$$
Therefore,
\begin{equation}\label{huyrf}N_{i_1,\dots,i_n}\ge \frac{I!}{i_1!\,\dotsm\, i_n!\,n!}\,. \end{equation}
For each $\psi\in\Psi_{i_1,\dots,i_n}$,
$$ \operatorname{Sym}_{I}\chi_{\Lambda_\psi^{(I)}}
=\operatorname{Sym}_{I}\chi_{\Lambda_\pi^{(I)}}.$$
Hence, by \eqref{hude7k} and \eqref{huyrf},
\begin{align}
&\frac1{n!}\,m_{i_1,\dots,i_n}(\Lambda_{\widehat 0}^{(n)})\notag\\
&\quad=\frac1{n!\, N_{i_1,\dots,i_n}}\sum_{\psi\in \Psi_{i_1,\dots,i_n}}
\int_{X^{I}} \chi_{\Lambda_\psi^{(I)}}\,
dm^{(I)}\notag\\
&\quad\le \frac{i_1!\,\dotsm i_n!}{I!}\,\int_{X^{I}}
\sum_{\psi\in \Psi_{i_1,\dots,i_n}}\chi_{\Lambda_\psi^{(I)}}\,
dm^{(I)}\notag\\
&\quad\le \frac{i_1!\,\dotsm i_n!}{I!}\,m^{(I)}(\Lambda^{I})\notag\\
&\quad\le i_1!\,\dotsm i_n!\, C_\Lambda^{I}.\notag
\end{align}
\end{proof}

To prove statements (i)--(iii) of the theorem, let us first  carry out some considerations. Note that, for each $n\in\mathbb N$ and each measurable function $f^{(n)}:X^n\to[0,\infty]$, the functional
$$\mathbb K(X)\ni\eta\mapsto\langle \eta^{\otimes n},f^{(n)}\rangle\in[0,\infty]$$ is measurable and
\begin{equation}\label{ufry7urfd}
\int_{\mathbb K(X)}\langle\eta^{\otimes n},f^{(n)}\rangle\,d\mu(\eta)=\int_{X^n}f^{(n)}\,dM^{(n)}.
\end{equation}

As easily seen, equality \eqref{vuduy} can be extended to the class of all measurable  (not necessarily symmetric) functions $f^{(n)}:\set_n
\to[0,\infty]$ as follows:
\begin{align}
&\int_{\Gamma_{pf}(Y)}\frac1{n!}\sum_{\substack{(x_1,s_1),\dots,(x_n,s_n)\in\gamma\\
\text{$x_1,\dots,x_n$ different}}} f^{(n)}(x_1,s_1,\dots,x_n,s_n)\,d\nu(\gamma)\notag\\
&\quad=\int_{\set_n}f^{(n)}(x_1,s_1,\dots,x_n,s_n)\,d\rho^{(n)}(x_1,s_1,\dots,x_n,s_n).\label{futkdr7keddxrs}
\end{align}
If we extend the function $f^{(n)}$ by zero to the whole space $Y^n$, we can rewrite \eqref{futkdr7keddxrs} in the equivalent form:
\begin{align}
&\int_{\Gamma_{pf}(Y)}\frac1{n!}\sum_{(x_1,s_1),\dots,(x_n,s_n)\in\gamma} f^{(n)}(x_1,s_1,\dots,x_n,s_n)\,d\nu(\gamma)\notag\\
&\quad=\int_{\set_n}f^{(n)}(x_1,s_1,\dots,x_n,s_n)\,d\rho^{(n)}(x_1,s_1,\dots,x_n,s_n).\label{gdQF}
\end{align}
In particular, for any measurable function $g^{(n)}:X^n\to[0,\infty]$ which vanishes outside $X^{(n)}_{\widehat 0}$ and any $i_1,\dots,i_n\in\mathbb N$, we get
\begin{align}
&\int_{\Gamma_{pf}(Y)}\frac1{n!}\sum_{(x_1,s_1),\dots,(x_n,s_n)\in\gamma} g^{(n)}(x_1,\dots,x_n)s_1^{i_1}\dotsm s_n^{i_n}\,d\nu(\gamma)\notag\\
&\quad=\int_{\set_n}g^{(n)}(x_1,\dots,x_n)s_1^{i_1}\dots, s_n^{i_n}\,d\rho^{(n)}(x_1,s_1,\dots,x_n,s_n).\label{dfqytd}
\end{align}

For each function $f:X_\pi^{(i_1+ \ldots + i_n)} \rightarrow \mathbb{R}$ one can define a function $X^{(n)}_{\widehat 0} \rightarrow \mathbb{R}$ via $B_\pi:X_\pi^{(i_1+\dots+i_n)}\to X^{(n)}_{\widehat 0}$. Now we will describe the opposite procedure.
For simplicity of notation, we will write below
$$\mathcal{I}_n(x_1,\dots,x_n):=\chi_{X^{(n)}_{\widehat1}}(x_1,\dots,x_n),\quad (x_1,\dots,x_n)\in X^n.$$
Thus, $\mathcal{I}_n(x_1,\dots,x_n)$ is
equal to 1 if $x_1=x_2=\dots=x_n$, and is equal to zero otherwise.
For $i_1,\dots,i_n\in\mathbb N$, we define a function $\mathcal I_{i_1,\dots,i_n}:X^{i_1+\dots+i_n}\to\{0,1\}$ by setting
\begin{align*}&\mathcal I_{i_1,\dots,i_n}(x_1,\dots,x_{i_1+\dots+i_n})\\
&\quad:=\mathcal{I}_{i_1}(x_1,\dots,x_{i_1})\mathcal{I}_{i_2}(x_{i_1+1},\dots,x_{i_1+i_2})\dotsm
\mathcal{I}_{i_n}(x_{i_1+\dots+i_{n-1}+1},\dots,x_{i_1+\dots+i_n}).\end{align*}
For a measurable function $g^{(n)}:X^n\to[0,\infty)$ which vanishes outside $X^{(n)}_{\widehat 0}$, we define a measurable function
$\mathcal R_{i_1,\dots,i_n}g^{(n)}:X^{i_1+\dots+i_n}\to [0,\infty]$ by
\begin{align}&(\mathcal R_{i_1,\dots,i_n}g^{(n)})(x_1,\dots,x_{i_1+\dots+i_n})\notag\\
&\quad:= g^{(n)}(x_1,x_{i_1+1},x_{i_1+i_2+1},\dots,x_{i_1+\dots+i_{n-1}+1})\mathcal I_{i_1,\dots,i_n}(x_1,\dots,x_{i_1+\dots+i_n}).\label{jhacvc}\end{align}
Note that the function $\mathcal R_{i_1,\dots,i_n}g^{(n)}$ vanishes outside the set $X^{(i_1+\dots+i_n)}_\pi$, where  $\pi=\{A_1,\dots,A_n\}$  with the sets $A_1,\dots,A_n$ being as in \eqref{ufy7fr}.
For each $\eta\in\mathbb K(X)$,
\begin{align}
&\langle \eta^{\otimes(i_1+\dots+i_n)},\mathcal R_{i_1,\dots,i_n}g^{(n)}\rangle\notag\\
&\quad=\sum_{(x_1,s_1),\dots,(x_I,s_I)\in\mathcal E(\eta)}(\mathcal R_{i_1,\dots,i_n}g^{(n)})(x_1,\dots,x_{I})s_1\dotsm s_{I}\notag\\
&\quad=n! \sum_{\{(x_1,s_1),\dots,(x_n,s_n)\}\subset\mathcal E(\eta)}g^{(n)}(x_1,\dots,x_n)s_1^{i_1}\dotsm s_n^{i_n}.\label{vhufut7f}
\end{align}
Here we write $I=i_1+\dots+i_n$ to save the space.
By \eqref{dfqytd}, \eqref{vhufut7f}, and the definition of the measure $\nu$, we get
\begin{align}
&\frac1{n!}\int_{\mathbb K(X)}\langle \eta^{\otimes(i_1+\dots+i_n)},\mathcal R_{i_1,\dots,i_n}g^{(n)}\rangle\,d\mu(\eta)\notag\\
&\quad=\int_{\set_n}g^{(n)}(x_1,\dots,x_n)s_1^{i_1}\dots, s_n^{i_n}\,d\rho^{(n)}(x_1,s_1,\dots,x_n,s_n).\notag
\end{align}
Hence, by \eqref{ufry7urfd},
\begin{align}
&\int_{\set_n}g^{(n)}(x_1,\dots,x_n)s_1^{i_1}\dots, s_n^{i_n}\,d\rho^{(n)}(x_1,s_1,\dots,x_n,s_n).\notag\\
&\quad=\frac1{n!}
\int_{X^{i_1+\dots+i_n}}\mathcal R_{i_1,\dots,i_n}g^{(n)}\, dM^{(i_1+\dots+i_n)}
\notag\\
&\quad=\frac1{n!}
\int_{X^{(i_1+\dots+i_n)}_\pi}\mathcal R_{i_1,\dots,i_n}g^{(n)}\, dM^{(i_1+\dots+i_n)},
\notag
\end{align}
where the partition $\pi$ is as above. From here we conclude that equality \eqref{qg8yodw} holds.
We  define a symmetric measure $\xi^{(n)}$ on $\set_n$ by setting
\begin{equation}\label{gvhfcdtudf}
d\xi^{(n)}(x_1,s_1,\dots,x_n,s_n):=s_1\dotsm s_n \,d\rho^{(n)}(x_1,s_1,\dots,x_n,s_n).\end{equation}
Then, equality \eqref{qg8yodw} can be rewritten as follows:
\begin{align}
&\int_{\set_n}g^{(n)}(x_1,\dots,x_n)s_1^{i_1}\dots, s_n^{i_n}\,d\xi^{(n)}(x_1,s_1,\dots,x_n,s_n).\notag\\
&\quad=\frac1{n!}\int_{X^{(n)}_{\widehat 0}}g^{(n)}(x_1,\dots,x_n)\,dM_{i_1+1,\dots,i_n+1}(x_1,\dots,x_n),
\quad (i_1,\dots,i_n)\in\mathbb Z_+^n.
\label{kigfti8ylt}
\end{align}

For any $\Delta\in\mathcal B_c(X^{(n)}_{\widehat0})$, let $\xi_\Delta^{(n)}$ be the finite measure on $( \R_+)^n$ which satisfies \eqref{ufru8}.
Denote
\begin{equation}\label{ude7re7u}\xi^\Delta_{\mathbf i}=\xi^\Delta_{i_1,\dots,i_n}:=\frac1{n!}\,M_{i_1+1,\dots,i_n+1}(\Delta),\quad \mathbf i=(i_1,\dots,i_n)\in\mathbb Z_+^n.\end{equation}
Then, by \eqref{kigfti8ylt} and \eqref{ude7re7u},
\begin{equation}\label{bhutfdy7uf}\xi^\Delta_{\mathbf i}=\int_{( \R_+)^n} s_1^{i_1}\dotsm s_n^{i_n}\,d\xi_\Delta^{(n)}(s_1,\dots,s_n) ,\quad \mathbf i=(i_1,\dots,i_n)\in\mathbb Z_+^n.\end{equation}
Thus, $(\xi^\Delta_{\mathbf i})_{\mathbf i\in\mathbb Z_+^n}$ is the moment sequence of the  finite  measure $\xi_\Delta^{(n)}$.

Choose any $\Lambda\in\mathcal B_c(X)$ such that $\Delta\subset\Lambda^{(n)}_{\widehat0}$. By formulas \eqref{hydyrk}, \eqref{ude7re7u} and Lemma~\ref{lhgiuty9},
\begin{align}
\xi_{i_1,\dots,i_n}^\Delta&\le \frac1{n!}\, M_{i_1+1,\dots,i_n+1}(\Lambda_{\widehat 0}^{(n)})\notag\\
&\le (i_1+1)!\,\dotsm (i_n+1)!\, C_{\Lambda}^{i_1+\dots+i_n+n}\notag\\
&\le (i_1+\dots+i_n+n)!\, C_\Lambda ^{i_1+\dots+i_n+n},\quad (i_1,\dots,i_n)\in\mathbb Z_+^n.\label{gyit9i78t5798pt5r86t}
\end{align}

We are now ready to finish the proof of the theorem. Since $(\xi^\Delta_{\mathbf i})_{\mathbf i\in\mathbb Z_+^n}$ is the moment sequence of the   finite  measure $\xi_\Delta^{(n)}$ on $( \R_+)^n$, and since this moment sequence satisfies  estimate
\eqref{gyit9i78t5798pt5r86t}, we conclude from e.g.\ \cite[Chapter 5, Subsec.~2.1, Examples 2.1, 2.2]{BK} that the moment sequence $(\xi^\Delta_{\mathbf i})_{\mathbf i\in\mathbb Z_+^n}$ uniquely identifies the measure $\xi_\Delta^{(n)}$. Hence, statement (i)  holds. Next, equality \eqref{ufru8} evidently holds by (\ref{kigfti8ylt}). Note also that the values of the measure $\xi^{(n)}$ on the sets of the form
$$\big\{(x_1,s_1,\dots,x_n,s_n)\in \set_n\mid (x_1,\dots,x_n)\in\Delta,\ (s_1,\dots,s_n)\in A\big\}$$
where $\Delta\in\mathcal B_c(X^{(n)}_{\widehat0})$ and $A\in\mathcal B(( \R_+)^n)$,  completely identify the measure $\xi^{(n)}$ on $\set^n$. Thus, statement (ii) holds. Finally, statement (iii) trivially follows from \eqref{gvhfcdtudf}.

\end{proof}

\section{A characterization  of  random discrete measure in terms of moments}\label{hftuttfd7u}


In this section, we assume that $\mu$ is a random measure on $X$ which has finite moments. Let $(M^{(n)})_{n=0}^\infty$ be its moment sequence.  We assume additionally to  condition (C1) that  condition (C2) is satisfied.

\begin{remark}\label{rmc2}
Assumption (C2) is usually satisfied by a  measure $\mu$  being concentrated on the cone $\mathbb K(X)$.
In the latter case, by the proof of Theorem~\ref{ufut7fruvfguqfd}, we have
\begin{align*}
M^{(n)}(\Lambda^{(n)}_{\widehat0})&=M_{1, \dots , 1}(\Lambda^{(n)}_{\widehat0})=n!\,\xi^{(n)}(\set_n\cap (\Lambda\times \R_+)^n)\\
&= n!\int_{\set_n\cap (\Lambda\times \R_+)^n}
s_1\dotsm s_n\,d\rho^{(n)}(x_1,s_1,\dots,x_n,s_n),
\end{align*}
so that estimate \eqref{lhgigikbgi} becomes
$$\int_{\set_n\cap (\Lambda\times \R_+)^n}
s_1\dotsm s_n\,d\rho^{(n)}(x_1,s_1,\dots,x_n,s_n)\le (C_\Lambda')^n.$$
For example, in the case of the gamma measure (see Introduction), we have
$$ \int_{\set_n\cap (\Lambda\times \R_+)^n}
s_1\dotsm s_n\,d\rho^{(n)}(x_1,s_1,\dots,x_n,s_n)=\frac1{n!}\left(\int_{\Lambda}dx\right)^n,$$
so condition (C2) is trivially satisfied.

Note also that one should not  expect that the constant $C_\Lambda$ in estimate (C1) becomes small as set $\Lambda$ shrinks to an empty set. This, for example, is not even true in the case of the gamma measure.
Indeed, $$M^{(n)}(\Lambda) =
\prod_{k=0}^{n-1}\left(\int_\Lambda dx+k\right).
$$
($M^{(n)}(\Lambda)$ is the $n$-th moment of the gamma distribution with parameter $\int_\Lambda dx$.)
 For each $n$, this decays at most like $\int_\Lambda dx$ and hence $C_\Lambda$ cannot decrease to zero. 
 \end{remark}

Recall that before Theorem~\ref{jkgfrt7urd} we fixed  a sequence $(\Lambda_l)_{l=1}^\infty$ of compact subsets of $X$ such that $\Lambda_1\subset\Lambda_2\subset\Lambda_3\subset\dotsm$ and $\bigcup_{l=1}^\infty\Lambda_l=X$.

\begin{proof}[Proof of Theorem~\ref{jkgfrt7urd}]
Assume that $\mu(\mathbb K(X))=1$ and let us show that conditions (i) and (ii) are satisfied.
Let $\Delta\in\mathcal B_c(X^{(n)}_{\widehat0})$.
It follows from the proof of  Theorem~\ref{ufut7fruvfguqfd} (see in particular formula \eqref{bhutfdy7uf}) that  the sequence $(\xi^\Delta_{\mathbf i})_{\mathbf i\in\mathbb Z_+^n}$ is the moment sequence of the  finite  measure $\xi_\Delta^{(n)}$. Hence, condition (i) is indeed satisfied (see e.g.\ \cite[Chapter 5, Subsec.~2.1]{BK}).

Next, let $\Delta\in\mathcal B(X_{\widehat 0}^{(n)})$
be of the form $\Delta=(\Lambda_l)^{(n)}_{\widehat 0}$. Clearly,
 $(r_i^\Delta)_{i=0}^\infty$ is the moment sequence of the first coordinate projection of the measure $\xi_\Delta^{(n)}$, which we denote by $P_1\xi_{\Delta}^{(n)}$.
The  measure $P_1\xi_{\Delta}^{(n)}$ is concentrated on $[0,\infty)$, hence \eqref{bkjgtfi8ytf8gi} follows (see e.g.\ \cite[Chapter~2, Subsec.~6.5]{A}).  By (C1), \eqref{ufru8},
\eqref{kigfti8ylt},  Lemma~\ref{lhgiuty9} and as $\Delta=(\Lambda_l)^{(n)}_{\widehat 0}$
\begin{align}
 r_i^\Delta&=\int_{( \R_+)^n} s_1^i\,d\xi^{(n)}_\Delta(s_1,\dots,s_n)\notag\\
&=\int_{\set_n} \chi_\Delta(x_1,\dots,x_n)s_1^{i}\,d\xi^{(n)}(x_1,s_1,\dots,x_n,s_n)\notag\\
&=\frac1{n!}\int_{X^{(n)}_{\widehat0}}\chi_\Delta(x_1,\dots,x_n) \,dM_{i+1,1,1,\dots,1}(x_1,\dots,x_n)\notag\\
&= \frac1{n!}\,M_{i+1,1,1,\dots,1}((\Lambda_l)^{(n)}_{\widehat0})\notag\\
&\le (i+1)!\, C_\Lambda^{n+i},\quad i\in\mathbb Z_+.
\end{align}
Hence, by the Carleman criterion (see e.g.\ \cite{A}), the measure $P_1\xi_{\Delta}^{(n)}$ is the unique measure on $\R$ which has moments $(r_i^\Delta)_{i=0}^\infty$. Therefore, by \cite[formula (4) in Chapter I, Sect.1; Chapter II, Subsec.~4.1; Theorem~2.5.3]{A}, formula
\eqref{ug86r8o7} follows from the fact that the measure $P_1\xi_{\Delta}^{(n)}$ has no atom at point 0. Thus, condition (ii) is satisfied.

\begin{remark} Note that, in this part of the proof,  we have not used condition (C2).
\end{remark}

Let us now prove the converse statement. So, we assume that $(M^{(n)})_{n=0}^\infty$ is a sequence of symmetric Radon measures fulfilling (C1), (C2),  (i), (ii) and $M^{(0)}=1$. That $(M^{(n)})_{n=0}^\infty$ is the moment sequence of a probability measure $\mu$ on $(\mathbb M(X),\mathcal B(\mathbb M(X)))$ will only be  used in Lemma~\ref{utft7u7r}. We will show the existence of a measure $\mu'$ with $\mu'(\mathbb K(X))=1$ which has as its moments $(M^{(n)})_{n=0}^\infty$. Finally, we will argue that, if $(M^{(n)})_{n=0}^\infty$ is the moment sequence of a probability measure $\mu$, then by the uniqueness of solution of the moment problem $\mu=\mu'$ and hence $\mu(\mathbb K(X))=1$.

Fix any $n\in\mathbb N$ and $\Delta\in\mathcal B_c(X^{(n)}_{\widehat 0})$.
 Choose  $\Lambda\in\mathcal B_c(X)$ such that $\Delta\subset\Lambda^{(n)}_{\widehat0}$. By  (C1), \eqref{ude7re7uvt}, and Lemma~\ref{lhgiuty9},
\begin{align}
\xi_{i_1,\dots,i_n}^\Delta&=\frac1{n!}\, M_{i_1+1,\dots,i_n+1}(\Delta)\notag\\
&\le \frac1{n!}\, M_{i_1+1,\dots,i_n+1}(\Lambda_{\widehat 0}^{(n)})\notag\\
&\le (i_1+1)!\,\dotsm (i_n+1)!\, C_{\Lambda}^{i_1+\dots+i_n+n}\notag\\
&\le (i_1+\dots+i_n+n)!\, C_\Lambda ^{i_1+\dots+i_n+n},\quad (i_1,\dots,i_n)\in\mathbb Z_+^n.\label{it68r}
\end{align}
Furthermore, by condition (i), the sequence $(\xi^\Delta_{\mathbf i})_{\mathbf i\in\mathbb Z_+^n}$ is positive definite. Hence, using e.g.\ \cite[Chapter 5, Subsec.~2.1, Examples 2.1, 2.2]{BK},  we conclude that there exists a unique measure $\xi_\Delta^{(n)}$ on $\R^n$ such that $(\xi^\Delta_{\mathbf i})_{\mathbf i\in\mathbb Z_+^n}$ is its moment sequence, i.e.,
\begin{equation}\label{tyde6ue6iut}
\xi^\Delta_{\mathbf i}=\int_{\R^n} s_1^{i_1}\dotsm s_n^{i_n}\,d\xi_\Delta^{(n)}(s_1,\dots,s_n) ,\quad \mathbf i=(i_1,\dots,i_n)\in\mathbb Z_+^n.\end{equation}

\begin{lemma}\label{huf}
Let $n\in\mathbb N$. Let $\{\Delta_k\}_{k=1}^\infty$ be a sequence of disjoint sets from $\mathcal B_c(X^{(n)}_{\widehat0})$. Denote $\Delta:=\bigcup_{k=1}^\infty\Delta_k$ and  assume that $\Delta\in\mathcal B_c(X^{(n)}_{\widehat0})$. We then have
\begin{equation}\label{igti88}
\sum_{k=1}^\infty \xi^{(n)}_{\Delta_k}=\xi_{\Delta}^{(n)}.
\end{equation}

\end{lemma}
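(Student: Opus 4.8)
The plan is to reduce \eqref{igti88} to the uniqueness (determinacy) of the multidimensional moment problem, which has already been used above to define each $\xi^{(n)}_\Delta$. Set $\nu:=\sum_{k=1}^\infty\xi^{(n)}_{\Delta_k}$, a countable sum of finite measures on $\mathbb R^n$. First I would check that $\nu$ is itself finite. Since $\Delta\mapsto M_{1,\dots,1}(\Delta)$ is a Radon measure on $X^{(n)}_{\widehat0}$, it is countably additive, and therefore
\[
\nu(\mathbb R^n)=\sum_{k=1}^\infty\xi^{\Delta_k}_{\mathbf 0}=\frac1{n!}\sum_{k=1}^\infty M_{1,\dots,1}(\Delta_k)=\frac1{n!}\,M_{1,\dots,1}(\Delta)=\xi^\Delta_{\mathbf 0}<\infty,
\]
the finiteness following from $\Delta\in\mathcal B_c(X^{(n)}_{\widehat0})$. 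Thus $\nu$ and $\xi^{(n)}_\Delta$ are both finite measures on $\mathbb R^n$, and it suffices to show that they share the same moment sequence and then invoke determinacy.

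The heart of the matter, and the main obstacle, is to verify that $\nu$ has finite moments and that $\int_{\mathbb R^n}s_1^{i_1}\dotsm s_n^{i_n}\,d\nu=\xi^\Delta_{\mathbf i}$ for every $\mathbf i\in\mathbb Z_+^n$. The difficulty is that the measures $\xi^{(n)}_{\Delta_k}$ live a priori on all of $\mathbb R^n$ (their concentration on the positive orthant is established only later, via condition (ii)), so the monomial $s_1^{i_1}\dotsm s_n^{i_n}$ need not be of constant sign and Tonelli's theorem does not apply directly. To circumvent this I would pass to absolute moments using the elementary inequality $|s|^{i}\le 1+s^{2\lceil i/2\rceil}$, valid for all $s\in\mathbb R$ and $i\in\mathbb Z_+$. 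Taking products,
\[
\prod_{j=1}^n|s_j|^{i_j}\le\prod_{j=1}^n\bigl(1+s_j^{2\lceil i_j/2\rceil}\bigr)=\sum_{S\subseteq\{1,\dots,n\}}\ \prod_{j\in S}s_j^{2\lceil i_j/2\rceil}.
\]
Each monomial on the right has only even exponents, hence is a genuine nonnegative moment of $\xi^{(n)}_{\Delta_k}$, namely $\xi^{\Delta_k}_{\mathbf e(S)}$, where $\mathbf e(S)$ has entry $2\lceil i_j/2\rceil$ at positions $j\in S$ and $0$ elsewhere. By Tonelli's theorem (all terms nonnegative) together with the countable additivity of $M$,
\[
\sum_{k=1}^\infty\int_{\mathbb R^n}\prod_{j=1}^n|s_j|^{i_j}\,d\xi^{(n)}_{\Delta_k}\le\sum_{S\subseteq\{1,\dots,n\}}\ \sum_{k=1}^\infty\xi^{\Delta_k}_{\mathbf e(S)}=\sum_{S\subseteq\{1,\dots,n\}}\xi^\Delta_{\mathbf e(S)}<\infty,
\]
the finiteness of each $\xi^\Delta_{\mathbf e(S)}$ being guaranteed by the bound \eqref{it68r}. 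This absolute-moment estimate shows $s_1^{i_1}\dotsm s_n^{i_n}\in L^1(\nu)$ and legitimates splitting the monomial into positive and negative parts, whence $\int_{\mathbb R^n}s_1^{i_1}\dotsm s_n^{i_n}\,d\nu=\sum_{k=1}^\infty\int_{\mathbb R^n}s_1^{i_1}\dotsm s_n^{i_n}\,d\xi^{(n)}_{\Delta_k}=\sum_{k=1}^\infty\xi^{\Delta_k}_{\mathbf i}=\xi^\Delta_{\mathbf i}$, again by countable additivity of $M$.

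Finally I would conclude as follows. The measure $\nu$ is a finite measure on $\mathbb R^n$ whose moment sequence is $(\xi^\Delta_{\mathbf i})_{\mathbf i\in\mathbb Z_+^n}$, which coincides with that of $\xi^{(n)}_\Delta$. Since this sequence obeys the growth estimate \eqref{it68r}, the corresponding moment problem is determinate by \cite[Chapter 5, Subsec.~2.1, Examples 2.1, 2.2]{BK}; that is, $\xi^{(n)}_\Delta$ is the unique finite measure on $\mathbb R^n$ with these moments. Hence $\nu=\xi^{(n)}_\Delta$, which is exactly \eqref{igti88}.
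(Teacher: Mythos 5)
Your proof is correct and takes essentially the same route as the paper's: both show that $\sum_k\xi^{(n)}_{\Delta_k}$ and $\xi^{(n)}_\Delta$ have the same moment sequence (via countable additivity of $M_{i_1+1,\dots,i_n+1}$) and then conclude by determinacy of the moment problem under the bound \eqref{it68r}. The only difference is that the paper treats the interchange of the sum over $k$ with the integration of the monomial as immediate, whereas you carefully justify it with the absolute-moment estimate $|s|^{i}\le 1+s^{2\lceil i/2\rceil}$ — a worthwhile precaution, since the concentration of $\xi^{(n)}_{\Delta_k}$ on $(\R_+)^n$ is only established afterwards in Lemma~\ref{hfutf} and so cannot be used here.
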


\begin{proof}
Fix  any $i_1,\dots,i_n\in\mathbb Z_+$. Since $M^{(i_1+\dots+ i_n)}$ is a measure, we easily get
\begin{align*}&\sum_{k=1}^\infty \int_{\R^n}s_1^{i_1}\dotsm s_n^{i_n}\,d\xi^{(n)}_{\Delta_k}(s_1,\dots,s_n)\\
&\quad =\sum_{k=1}^\infty \frac1{n!}\,M_{i_1+1,\dots,i_n+1}(\Delta_k)\\
&\quad= \frac1{n!}M_{i_1+1,\dots,i_n+1}(\Delta)
\\
&\quad=\int_{\R^n}s_1^{i_1}\dotsm s_n^{i_n}\,d\xi^{(n)}_{\Delta}(s_1,\dots,s_n).\end{align*}
Hence, the measures $\sum_{k=1}^\infty \xi^{(n)}_{\Delta_k}$ and $\xi_\Delta^{(n)}$
have the same moments.
The measure $\xi^{(n)}$ fulfils the Carleman bound,  hence it  is uniquely identified by its moments. So \eqref{igti88} holds. \end{proof}

\begin{lemma}\label{hfutf}
For any $n\in\mathbb N$ and $\Delta\in\mathcal B_c(X^{(n)}_{\widehat 0})$, the measure $\xi_\Delta^{(n)}$ is concentrated on $(\R_+)^n$.
\end{lemma}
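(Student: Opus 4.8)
The plan is to prove the statement first for the distinguished symmetric sets $\Delta=(\Lambda_l)^{(n)}_{\widehat 0}$, $l\in\mathbb N$, where condition (ii) is directly available, and then to transfer it to an arbitrary $\Delta\in\mathcal B_c(X^{(n)}_{\widehat 0})$ by a monotonicity argument based on Lemma~\ref{huf}. So fix $\Delta=(\Lambda_l)^{(n)}_{\widehat 0}$ and abbreviate $\Lambda:=\Lambda_l$, so that $\Delta=\Lambda^{(n)}_{\widehat 0}$. Setting $i_2=\dots=i_n=0$ in \eqref{tyde6ue6iut} shows that $(r_i^\Delta)_{i=0}^\infty$ is the moment sequence of the first marginal $P_1\xi_\Delta^{(n)}$ of $\xi_\Delta^{(n)}$ on $\R$. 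Since $\Delta$ is invariant under all coordinate permutations and the family $(M_{i_1,\dots,i_n})$ is symmetric in its entries (Section~\ref{5tew5w}), the array $(\xi^\Delta_{\mathbf i})_{\mathbf i\in\mathbb Z_+^n}$ is invariant under permutations of $\mathbf i$; as the bound \eqref{it68r} renders this multidimensional moment problem determinate, its unique representing measure $\xi_\Delta^{(n)}$ is itself permutation-symmetric, whence every marginal $P_k\xi_\Delta^{(n)}$ coincides with $P_1\xi_\Delta^{(n)}$.

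Next I extract the support information from condition (ii). Taking $i_2=\dots=i_n=0$ in \eqref{it68r} gives $r_i^\Delta\le (i+1)!\,C_\Lambda^{\,n+i}$, so $(r_i^\Delta)$ satisfies the Carleman condition and $P_1\xi_\Delta^{(n)}$ is a determinate Hamburger measure on $\R$. The positive definiteness of the shifted Hankel form in \eqref{bkjgtfi8ytf8gi} is exactly the Stieltjes condition, so there exists a representing measure supported on $[0,\infty)$; being also a Hamburger solution, it must, by determinacy, equal $P_1\xi_\Delta^{(n)}$, which is therefore concentrated on $[0,\infty)$. Finally, expanding the $k$-th orthonormal polynomial $p_k$ of $P_1\xi_\Delta^{(n)}$ along its last row at the point $0$ yields $p_k(0)^2=(D_{k-1}D_k)^{-1}\det[\,\cdot\,]^2$ with exactly the shifted Hankel determinant appearing in \eqref{ug86r8o7}; hence \eqref{ug86r8o7} asserts $\sum_k p_k(0)^2=\infty$, which by the classical point-mass formula for a determinate Stieltjes measure \cite{A} is precisely the statement $P_1\xi_\Delta^{(n)}(\{0\})=0$. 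Thus $P_1\xi_\Delta^{(n)}$ is concentrated on $\R_+=(0,\infty)$, and since each $P_k\xi_\Delta^{(n)}$ equals $P_1\xi_\Delta^{(n)}$ we obtain $\xi_\Delta^{(n)}(\{s:s_k\le0\})=0$ for every $k$; the union over $k$ gives that $\xi_\Delta^{(n)}$ is concentrated on $(\R_+)^n$.

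For a general $\Delta\in\mathcal B_c(X^{(n)}_{\widehat 0})$, its closure is compact in $X^n$ and hence contained in some $\Lambda_l^n$; combined with $\Delta\subset X^{(n)}_{\widehat 0}$ this gives $\Delta\subset\Delta':=(\Lambda_l)^{(n)}_{\widehat 0}$. Writing $\Delta'=\Delta\cup(\Delta'\setminus\Delta)$ as a disjoint union and applying Lemma~\ref{huf} yields $\xi_\Delta^{(n)}+\xi_{\Delta'\setminus\Delta}^{(n)}=\xi_{\Delta'}^{(n)}$, and since both summands are nonnegative measures we get $\xi_\Delta^{(n)}\le\xi_{\Delta'}^{(n)}$. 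As $\xi_{\Delta'}^{(n)}$ is concentrated on $(\R_+)^n$ by the previous step, so is $\xi_\Delta^{(n)}$, completing the argument.

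The main obstacle is the identification of the summand in \eqref{ug86r8o7} with $p_k(0)^2$ and the resulting equivalence between \eqref{ug86r8o7} and the absence of an atom at $0$: this is where the orthogonal-polynomial theory of the (determinate) Stieltjes problem from \cite{A} must be invoked, and one has to confirm that the Carleman bound secures the determinacy needed for the point-mass formula and the positivity $D_k>0$ underlying the orthonormal polynomials. The secondary delicate point is the passage from the single-coordinate conclusion to all coordinates, which relies on converting the permutation symmetry of the moment array into symmetry of the measure itself via determinacy.
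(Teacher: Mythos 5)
Your proposal is correct and follows essentially the same route as the paper: reduce to the distinguished sets $\Delta=(\Lambda_l)^{(n)}_{\widehat 0}$, use the Carleman bound from \eqref{it68r} for determinacy, read off concentration on $[0,\infty)$ from \eqref{bkjgtfi8ytf8gi} and absence of an atom at $0$ from \eqref{ug86r8o7} via the orthogonal-polynomial point-mass formula of \cite{A}, transfer to all coordinates by the permutation symmetry of $\xi_\Delta^{(n)}$ (established through determinacy and the symmetry of the moment array), and handle general $\Delta$ by monotonicity via Lemma~\ref{huf}. The only difference is that you spell out the Hankel-determinant expansion of $p_k(0)^2$ explicitly where the paper simply cites \cite{A}.
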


\begin{proof}
Fix any $l\in\mathbb N$ and set $\Delta=(\Lambda_l)^{(n)}_{\widehat 0}$. By  \eqref{yufu8r86orftf} and \eqref{tyde6ue6iut},
$$ r_i^\Delta= \int_{\R^n}s_1^i \,d\xi_\Delta^{(n)}(s_1,\dots,s_n),\quad i\in\mathbb Z_+. $$
Thus, the numbers $(r_i^\Delta)_{i=0}^\infty$ form the moment sequence of the first coordinate projection of the measure $\xi_\Delta^{(n)}$, which we denote, as above, by $P_1\xi_\Delta^{(n)}$. As easily follows from \eqref{it68r} and the Carleman criterion, the measure $P_1\xi_\Delta^{(n)}$ is uniquely identified by its moment sequence. Then, by \eqref{bkjgtfi8ytf8gi}, the measure $P_1\xi_\Delta^{(n)}$ is concentrated on $[0,\infty)$, and by \eqref{ug86r8o7}, $(P_1\xi_\Delta^{(n)})(\{0\})=0$, see \cite{A}. Therefore, the measure $P_1\xi_\Delta^{(n)}$ is concentrated on $ \R_+$. Evidently, for any $(i_1,\dots,i_n)\in\mathbb Z_+^n$ and any $\sigma\in\mathfrak S_n$, we get
\begin{align*}&\int_{\R^n}s_{\sigma(1)}^{i_1}\dotsm s_{\sigma(n)}^{i_n}\,d\xi^{(n)}_\Delta(s_1,\dots,s_n)\\
&\quad=\int_{\R^n}
s_1^{i_{\sigma^{-1}(1)}}\dotsm s_n^{i_{\sigma^{-1}(n)}}\,d\xi^{(n)}_\Delta(s_1,\dots,s_n)\\
&\quad=\frac1{n!}\,M_{i_{\sigma^{-1}(1)+1}, \dots , i_{\sigma^{-1}(n)+1}}(\Delta)\\
&\quad=\frac1{n!}\,M_{i_1+1, \dots , i_n+1}(\Delta)\\
&\quad=\int_{\R^n}
s_1^{i_1}\dotsm s_n^{i_n}\,d\xi^{(n)}_\Delta(s_1,\dots,s_n).
 \end{align*}
Hence, the measure $\xi^{(n)}_\Delta$ is symmetric on $\R^n$. Therefore, for each $j=1,\dots,n$, the $j$-th coordinate projection of $\xi_\Delta^{(n)}$ is concentrated on $ \R_+$. This implies that the measure $\xi_\Delta^{(n)}$ is concentrated on $( \R_+)^n$. 

Now, fix an arbitrary $\Delta\in\mathcal B_c(X^{(n)}_{\widehat 0})$. Choose $l\in\mathbb N$ large enough so that $\Delta\subset (\Lambda_l)^{(n)}_{\widehat 0}=:\Delta'$. Then, by Lemma~\ref{huf},
$$\xi^{(n)}_\Delta(\complement (\R_+)^n) \le \xi^{(n)}_{\Delta'}(\complement (\R_+)^n)=0.$$
Here $\complement (\R_+)^n$ denotes the complement of $(\R_+)^n$.
Thus, the measure $\xi_\Delta^{(n)}$ is concentrated on $(\R_+)^n$. 
\end{proof}

\begin{lemma} \label{tyre75r86dss}
For each $n\in\mathbb N$, there exists a unique  measure $\xi^{(n)}$ on $\set_n$ which satisfies \eqref{ufru8}
for all   $\Delta\in\mathcal B_c(X^{(n)}_{\widehat0})$ and $A\in\mathcal B(( \R_+)^n)$.
\end{lemma}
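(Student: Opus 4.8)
The plan is to construct $\xi^{(n)}$ as a measure on $\set_n$ by piecing together the information carried by the family $\{\xi_\Delta^{(n)}\}$ indexed by $\Delta\in\mathcal B_c(X^{(n)}_{\widehat0})$, where each $\xi_\Delta^{(n)}$ is the measure on $(\R_+)^n$ produced in Lemma~\ref{hfutf}. Intuitively, $\xi_\Delta^{(n)}$ should be the ``$s$-marginal'' of the desired measure $\xi^{(n)}$ restricted to the slab $\{(x,s):x\in\Delta\}$, so \eqref{ufru8} is precisely a disintegration-type identity. Since uniqueness is the easy half, I would dispatch it first: two measures satisfying \eqref{ufru8} agree on the product sets $\{x\in\Delta,\,s\in A\}$ with $\Delta\in\mathcal B_c(X^{(n)}_{\widehat0})$ and $A\in\mathcal B((\R_+)^n)$, and this collection is a $\pi$-system generating $\mathcal B(\set_n)$ (using $\sigma$-compactness of $X^{(n)}_{\widehat0}$ via the exhausting sequence $\Lambda_l$ to reduce to finite measures), so the monotone class / $\pi$-$\lambda$ theorem forces equality.

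For existence, the natural route is to define, for fixed $A\in\mathcal B((\R_+)^n)$, a set function
\[
\Delta\longmapsto \xi_\Delta^{(n)}(A),\qquad \Delta\in\mathcal B_c(X^{(n)}_{\widehat0}),
\]
and show it extends to a measure in the $\Delta$-variable; then I would assemble these into a single measure on the product $\sigma$-algebra. The $\sigma$-additivity in $\Delta$ for fixed $A$ is exactly the content of Lemma~\ref{huf}: if $\Delta=\bigsqcup_k\Delta_k$ then $\sum_k\xi_{\Delta_k}^{(n)}=\xi_\Delta^{(n)}$ as measures on $(\R_+)^n$, hence evaluating both sides at $A$ gives countable additivity of $\Delta\mapsto\xi_\Delta^{(n)}(A)$. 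Thus for each fixed $A$ we obtain a measure $\Delta\mapsto\xi_\Delta^{(n)}(A)$ on $\mathcal B_c(X^{(n)}_{\widehat0})$, finite on each $(\Lambda_l)^{(n)}_{\widehat0}$ by the bound \eqref{it68r} with $i_1=\dots=i_n=0$. The plan is then to invoke a standard bimeasure / product-construction theorem: a set function that is a measure separately in each of two variables and is suitably $\sigma$-finite extends uniquely to a genuine measure on the product, and this product measure is the desired $\xi^{(n)}$ satisfying \eqref{ufru8}.

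Concretely, I would first build $\xi^{(n)}$ on each slab $\set_n\cap((\Lambda_l)^{(n)}_{\widehat0}\times(\R_+)^n)$, where $\xi_{(\Lambda_l)^{(n)}_{\widehat0}}^{(n)}$ is a \emph{finite} measure, apply the bimeasure extension there, and then take $l\to\infty$: by Lemma~\ref{huf} the restrictions are consistent, so the slab measures glue into a single $\sigma$-finite measure $\xi^{(n)}$ on all of $\set_n$. Finally I would verify \eqref{ufru8} for general $\Delta$ by writing $\Delta$ as an increasing limit of $\Delta\cap(\Lambda_l)^{(n)}_{\widehat0}$ and using monotone convergence together with Lemma~\ref{huf}. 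The main obstacle is the bimeasure step: a set function separately countably additive in two variables is \emph{not} automatically a measure on the product, so I must supply the regularity needed (finiteness on slabs plus the fact that for $A$ an interval-box the map $\Delta\mapsto\xi_\Delta^{(n)}(A)$ is dominated by the finite measure $\xi_\Delta^{(n)}((\R_+)^n)$). The cleanest way to sidestep subtle bimeasure pathologies is to note that the family $\xi_\Delta^{(n)}$ already arises from the single measure $M_{i_1+1,\dots,i_n+1}$ via \eqref{kigfti8ylt}; I would exploit this by defining $\xi^{(n)}$ through its action on functions of the form $g^{(n)}(x_1,\dots,x_n)\,s_1^{i_1}\dotsm s_n^{i_n}$ (which by Lemma~\ref{hfutf} and the moment determinacy pin down the measure), thereby reducing the product-measurability question to the already-established regularity of the moment measures $M_{i_1,\dots,i_n}$ on $X^{(n)}_{\widehat0}$.
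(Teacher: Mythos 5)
Your argument is essentially the paper's: the paper likewise reduces to the family $\xi_\Delta^{(n)}$ (extended to all $\Delta\in\mathcal B_c(X^n)$ by intersecting with $X^{(n)}_{\widehat 0}$), uses Lemma~\ref{huf} for countable additivity in the $\Delta$-variable, and then invokes a standard bimeasure-to-product-measure extension (citing Kingman, Remark~(3), p.~66) --- precisely the step you identify as the crux and control via finiteness of $\xi^{(n)}_{(\Lambda_l)^{(n)}_{\widehat 0}}$ on the exhausting slabs. Your closing suggestion to ``sidestep'' the bimeasure step by defining $\xi^{(n)}$ through its action on functions $g^{(n)}(x)s_1^{i_1}\dotsm s_n^{i_n}$ is vaguer than, and not needed given, your main line, which is sound and matches the paper.
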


\begin{proof} For each $\Delta\in\mathcal B_c(X^n)$, we define a measure $\xi^{(n)}_\Delta$ on $( \R_+)^n$ by
$$\xi^{(n)}_\Delta:=\xi^{(n)}_{\Delta\cap X_{\widehat 0}^{(n)}}.$$
(Note that $\Delta\cap X_{\widehat 0}^{(n)}\in \mathcal B_c(X^{(n)}_{\widehat0})$.) The  statement analogous to Lemma~\ref{huf} holds for $\mathcal B_c(X^n)$.
So, it suffices to prove that there exists a unique measure $\xi^{(n)}$ on $Y^n$ which satisfies
\begin{equation}\label{tye576iei}
\xi^{(n)}(\Delta\times A)=\xi_\Delta^{(n)}(A),\quad
\Delta\in\mathcal B_c(X^{n}),\ A\in\mathcal B(( \R_+)^n).\end{equation}
But this follows from the fact that, for each $\Delta\in\mathcal B_c(X^n)$, $\xi_\Delta^{(n)}$ is a measure on $(\R_+)^n$ and from Lemma~\ref{huf}, see e.g.\ Remark (3), p.~66 in \cite{Kingman}.
\end{proof}

Let us now recall a result from \cite[Corollary 1]{LM}  (see also \cite{BKKL,KK}) about existence of a unique point process with a given correlation measure. We will adopt this result to the case of the locally compact Polish space $Y=X\times\R_+$.

Let $\rho$ be a measure on $(\Gamma_0(Y),\mathcal B(\Gamma_0(Y)))$. We assume that $\rho$ satisfies the conditions (LB) and (PD) introduced below.

\begin{itemize}

\item[(LB)] {\it Local bound:} For any $\Lambda\in\mathcal B_c(X)$ and $A\in\mathcal B_c( \R_+)$, there exists a constant $\operatorname{const}_{\Lambda,A}>0$ such that
$$\rho^{(n)}((\Lambda\times A)^n\cap \set_n)\le \operatorname{const}_{\Lambda,A}^n,\quad n\in\mathbb N,$$
and for any sequence $\Lambda_k\in\mathcal B_c(X)$ such that $\Lambda_k\downarrow\varnothing$ and $A\in\mathcal B_c( \R_+)$, we have $\operatorname{const}_{\Lambda_k,A}\to0$ as $k\to\infty$.
\end{itemize}

To formulate condition (PD) we first need to give some definitions. For any measurable functions $G_1,G_2:\Gamma_0(Y)\to\R$, we define their $\star$-product as the measurable function $G_1\star G_2:
\Gamma_0(Y)\to\R$ given by
\begin{equation}\label{yufru7rdddrdsd} G_1\star G_2(\lambda):=\sum_{\substack{\lambda_1\subset\lambda,\,\lambda_2\subset\lambda\\
\lambda_1\cup\lambda_2=\lambda}} G_1(\lambda_1)G_2(\lambda_2),\quad \lambda\in\Gamma_0(Y).\end{equation}
We denote by $\mathcal S$ the class of all functions $G:\Gamma_0(Y)\to\R$ which satisfy the following assumptions:

\begin{itemize}
\item[(i)] There exists $N\in\mathbb N$ such that $G^{(n)}:=G\restriction \Gamma^{(n)}(Y)=0$ for all $n>N$.

\item[(ii)] For each $n=1,\dots,N$, the function
$G^{(n)}:=G\restriction \Gamma^{(n)}(Y)$ can be identified with  a finite linear combination of functions of the form
$$\operatorname{Sym}_n(\chi_{B_1}\otimes\dots\otimes\chi_{B_n}),$$
where for $i=1,\dots,n$ $B_i=\Lambda_i\times A_i$ with $\Lambda_i\in\mathcal B_c(X)$ and $A_i\in\mathcal B_c( \R_+)$,
$\operatorname{Sym}_n$ denotes the operator of symmetrization of a function, and
$$(\chi_{B_1}\otimes\dots\otimes\chi_{B_n})(x_1,s_1,\dots,x_n,s_n):=\chi_{B_1}(x_1,s_1)\dotsm\chi_{B_n}(x_n,s_n),$$
where $(x_1,s_1),\dots,(x_n,s_n)\in Y$ with $(x_i,s_i)\ne(x_j,s_j)$ if $i\ne j$.
\end{itemize}

It is evident that each function $G\in\mathcal S$ is bounded and integrable with respect to the measure $\rho$, and for any $G_1,G_2\in\mathcal S$, we have $G_1\star G_2\in\mathcal S$.

\begin{itemize}

\item[(PD)] {\it $\star$-positive definiteness:} For each $G\in\mathcal S$, we have
\begin{equation}\label{ctdydey67y}\int_{\Gamma_0(Y)}G\star G\,d\rho\ge0. \end{equation}
\end{itemize}

\begin{remark} Assume that $\rho$ is the correlation measure of a point process $\nu$, and assume that $\rho$ satisfies (LB). Let us briefly explain why condition (PD) must then be satisfied.
For a function $G\in\mathcal S$, we denote
$$(KG)(\gamma):=\sum_{\lambda\Subset\gamma}G(\lambda),\quad \gamma\in\Gamma(Y),$$
where $\lambda\Subset\gamma$ means that $\lambda \subset \gamma$ and $\lambda \in \Gamma_0(Y)$.
Then, by \eqref{yuf7ur7ddddytr},
$$\int_{\Gamma_0(Y)}G\,d\rho=\int_{\Gamma(Y)}KG\,d\nu.$$
Furthermore, an easy calculation shows that, for any $G_1,G_2\in\mathcal S$, we have
$$K(G_1\star G_2)=KG_1\cdot KG_2,$$
where the $\cdot$ in the above equality denotes the pointwise multiplication. Hence, in this case, formula \eqref{ctdydey67y} becomes
$$\int_{\Gamma(Y)}(KG)^2\,d\nu\ge0.$$
\end{remark}

\begin{theorem}[\!\!\cite{LM}] \label{bhftuf7urf7}
 Let $\rho$ be a measure on $(\Gamma_0(Y),\mathcal B(\Gamma_0(Y)))$ with
 \begin{equation}\label{gfrt57rd}
\rho(\Gamma^{(0)}(Y))=1\end{equation}
which satisfies the  conditions {\rm (LB)} and {\rm (PD).} Then there exists a unique point process $\nu$ in $Y$ whose correlation measure is $\rho$.
\end{theorem}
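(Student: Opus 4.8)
\textbf{The plan} is to reconstruct the point process $\nu$ from the combinatorial data encoded by $\rho$ via the $K$-transform, following the moment-problem strategy for point processes developed in \cite{LM,BKKL,KK}. The starting observation is that $\rho$ should be the correlation measure of $\nu$, so by the relation \eqref{yuf7ur7ddddytr} the natural candidate for the expectation functional of $\nu$ on ``polynomials'' in the configuration is $G\mapsto\int_{\Gamma_0(Y)}G\,d\rho$, understood through $\int_{\Gamma(Y)}KG\,d\nu=\int_{\Gamma_0(Y)}G\,d\rho$. First I would set up the algebra $\mathcal S$ together with the $\star$-product \eqref{yufru7rdddrdsd} and verify that $K$ intertwines $\star$ with pointwise multiplication, i.e.\ $K(G_1\star G_2)=KG_1\cdot KG_2$; this is what converts the abstract positivity \eqref{ctdydey67y} into genuine positive-definiteness of a linear functional on an algebra of test functions. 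Concretely, define the linear functional $L(G):=\int_{\Gamma_0(Y)}G\,d\rho$ on $\mathcal S$ and observe that condition (PD) is exactly the statement $L(G\star G)\ge0$, so that $(G_1,G_2)\mapsto L(G_1\star G_2)$ is a nonnegative Hermitian form.

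\textbf{The next step} is the construction of the measure itself. I would pass through a GNS-type / Riesz-representation argument: the nonnegative form $L(\,\cdot\star\cdot\,)$ on the commutative algebra $(\mathcal S,\star)$ gives, via the spectral theorem for the associated family of commuting self-adjoint multiplication-type operators (or equivalently via a projective-limit construction of finite-dimensional distributions on boxes $\Lambda\times A$), a probability measure $\nu$ on an appropriate space of configurations, with $L(G)=\int KG\,d\nu$. The normalisation $\rho(\Gamma^{(0)}(Y))=1$ in \eqref{gfrt57rd} guarantees $L(\mathbf 1)=1$, so $\nu$ is a genuine probability measure. The role of the local bound (LB) is twofold: it ensures that the moment sums defining $\nu$ on each bounded box $\Lambda\times A$ converge and satisfy a Carleman-type condition, forcing uniqueness of the local distributions; and the vanishing $\operatorname{const}_{\Lambda_k,A}\to0$ as $\Lambda_k\downarrow\varnothing$ controls the intensity so that the reconstructed measure is supported on locally finite configurations, i.e.\ genuinely on $\Gamma(Y)$ rather than on some larger completion.

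\textbf{I would then verify} that the point process $\nu$ so obtained indeed has $\rho$ as its correlation measure, by checking \eqref{yuf7ur7ddddytr} on the generating functions $G=\operatorname{Sym}_n(\chi_{B_1}\otimes\dots\otimes\chi_{B_n})$ that span $\mathcal S$; since both sides are $\sigma$-additive in each box and agree on this generating class, a monotone-class argument extends the identity to all measurable $G\ge0$. Uniqueness follows because (LB) furnishes the Carleman bound: a point process satisfying such factorial-moment growth is uniquely determined by its correlation measure, so any two point processes with correlation measure $\rho$ have identical local distributions on every $\Lambda\times A$ and hence coincide.

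\textbf{The main obstacle} I expect is the convergence and support control encoded in (LB). Establishing positivity on the algebra is essentially formal once $K(G_1\star G_2)=KG_1\cdot KG_2$ is in hand, but turning the positive functional into an honest \emph{probability} measure on $\Gamma(Y)$ — rather than a finitely additive set function, or a measure on too large a configuration-type space — requires the local moment estimate to deliver both the Carleman uniqueness and the local finiteness. This is precisely the delicate analytic core of the cited result \cite{LM}, and adapting it from the Euclidean setting to the locally compact Polish space $Y=X\times\R_+$ (where $\R_+$ carries the logarithmic metric) is where the real work lies; however, since Theorem~\ref{bhftuf7urf7} is quoted verbatim from \cite{LM}, the proof here consists of checking that the hypotheses (LB), (PD), and \eqref{gfrt57rd} match those of \cite[Corollary~1]{LM} and invoking it.
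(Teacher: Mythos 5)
The paper itself gives no proof of this theorem: it is quoted verbatim from \cite{LM} (Corollary~1), merely adapted notationally to the space $Y=X\times\R_+$. Your outline of the $K$-transform/$\star$-product strategy (positive definiteness of $L(G_1\star G_2)$, the spectral/projective-limit construction, (LB) delivering both Carleman uniqueness and concentration on locally finite configurations) is a faithful sketch of the argument in \cite{LM,BKKL}, and your closing observation --- that within the present paper the ``proof'' consists of matching the hypotheses (LB), (PD) and \eqref{gfrt57rd} to those of \cite[Corollary~1]{LM} and invoking it --- is exactly what the authors do.
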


For each $n\in\mathbb N$, we  now define a measures $\rho^{(n)}$ on $\set_n$ by
\begin{equation}\label{dsrte564ew} d\rho^{(n)}(x_1,s_1,\dots,x_n,s_n):=(s_1\dotsm s_n)^{-1}\, d\xi^{(n)}(x_1,s_1,\dots,x_n,s_n).\end{equation}
Note that $\rho^{(n)}$ is a symmetric measure on $\set_n$.
We next define a measure $\rho$ on\linebreak  $(\Gamma_0(Y),\mathcal B(\Gamma_0(Y)))$ by requiring that,  for each $n\in\mathbb N$, the restriction of the measure $\rho$ to $\Gamma^{(n)}(Y)$ be  equal to $\rho^{(n)}$, i.e., for each measurable function $G:\Gamma_0(Y)\to[0,\infty]$
\begin{equation}\label{biugfyuf}
\int_{\Gamma^{(n)}(Y)}G(\lambda)\,d\rho(\lambda)=\int_{\set_n}
G(\{x_1,s_1,\dots,x_n,s_n\})\,d\rho^{(n)}(x_1,s_1,\dots,x_n,s_n).\end{equation}
For $n=0$ we define $\rho$ by \eqref{gfrt57rd}. 
A crucial part of the proof of Theorem~\ref{jkgfrt7urd} is  the following theorem.

\begin{theorem}\label{gu76rt68o}Let the measure $\rho$ on $(\Gamma_0(Y),\mathcal B(\Gamma_0(Y)))$ be defined by \eqref{gfrt57rd}--\eqref{biugfyuf}. Then
there exists a unique point process $\nu$ in $Y$ whose correlation measure is $\rho$.
\end{theorem}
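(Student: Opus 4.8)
The plan is to obtain the point process $\nu$ from the general existence--uniqueness criterion of Theorem~\ref{bhftuf7urf7}. Thus it suffices to check that the measure $\rho$ fixed by \eqref{gfrt57rd}--\eqref{biugfyuf} satisfies the three hypotheses of that theorem: the normalisation $\rho(\Gamma^{(0)}(Y))=1$, the local bound (LB), and the $\star$-positive definiteness (PD). The normalisation is imposed by \eqref{gfrt57rd}, so only (LB) and (PD) remain. I expect (LB) to be a short computation and (PD) to be the genuine difficulty.

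For (LB) I would use the weight $(s_1\dotsm s_n)^{-1}$ appearing in \eqref{dsrte564ew} together with condition (C2). Fix $\Lambda\in\mathcal B_c(X)$ and $A\in\mathcal B_c(\R_+)$. Since $A$ has compact closure in $\R_+=(0,\infty)$, there is $a>0$ with $s\ge a$ for all $s\in A$, so $(s_1\dotsm s_n)^{-1}\le a^{-n}$ on $A^n$. Using \eqref{dsrte564ew}, then \eqref{ufru8} with $\Delta=\Lambda^{(n)}_{\widehat 0}$ and the identity $M_{1,\dots,1}=M^{(n)}$ on $X^{(n)}_{\widehat 0}$, one gets
$$\rho^{(n)}\big((\Lambda\times A)^n\cap\set_n\big)\le a^{-n}\,\xi^{(n)}\big((\Lambda\times\R_+)^n\cap\set_n\big)=a^{-n}\,\xi^{(n)}_{\Lambda^{(n)}_{\widehat 0}}\big((\R_+)^n\big)=\frac{a^{-n}}{n!}\,M^{(n)}\big(\Lambda^{(n)}_{\widehat 0}\big).$$
By the bound \eqref{lhgigikbgi} in (C2) the right-hand side is at most $(C'_\Lambda/a)^n$, so (LB) holds with $\operatorname{const}_{\Lambda,A}=C'_\Lambda/a$; and since $A$, hence $a$, is kept fixed while $C'_{\Lambda_k}\to0$ whenever $\Lambda_k\downarrow\varnothing$, the required decay $\operatorname{const}_{\Lambda_k,A}\to0$ follows. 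This is exactly the place where the decay part of (C2) is indispensable.

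For (PD) the natural but circular idea is the $K$-transform identity $K(G\star G)=(KG)^2$ combined with \eqref{yuf7ur7ddddytr}, which would give $\int_{\Gamma_0(Y)}G\star G\,d\rho=\int_{\Gamma(Y)}(KG)^2\,d\nu\ge0$ --- but $\nu$ is precisely what we are trying to produce. I would therefore replace the (still nonexistent) process $\nu$ by the given random measure $\mu$ on $\mathbb M(X)$. The point is that every $G\in\mathcal S$ has $KG\circ\mathcal E$ a polynomial in the functionals $T_{i,\Lambda}(\eta)=\sum_j\chi_\Lambda(x_j)s_j^{\,i}=\langle\eta^{\otimes i},\chi_\Lambda\mathcal I_i\rangle$, where $\chi_\Lambda\mathcal I_i$ is the kernel supported on the full diagonal of $X^i$ (after approximating each $s$-indicator $\chi_{A_i}$ by a polynomial in $s$ with no constant term, which is legitimate because each $A_i$ is bounded away from $0$ and $\infty$ and, by the Carleman bound coming from (C1) and Lemma~\ref{lhgiuty9}, the relevant measures are moment determinate). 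Each $T_{i,\Lambda}$ extends, through the diagonal kernel $\mathcal I_i$, to a functional on all of $\mathbb M(X)$ (vanishing on the non-atomic part), so $KG\circ\mathcal E$ extends to a polynomial functional $\Phi_G$ on $\mathbb M(X)$. The crucial step --- which is where the hypothesis that $(M^{(n)})_{n=0}^\infty$ is the moment sequence of an actual probability measure $\mu$ enters (this is Lemma~\ref{utft7u7r}) --- is the identity
$$\int_{\Gamma_0(Y)}F\,d\rho=\int_{\mathbb M(X)}\Phi_F(\eta)\,d\mu(\eta)\qquad\text{for every polynomial }F\in\mathcal S.$$
Granting it and applying it to $F=G\star G$, for which $\Phi_{G\star G}=\Phi_G^{\,2}$ by $K(G\star G)=(KG)^2$, yields $\int_{\Gamma_0(Y)}G\star G\,d\rho=\int_{\mathbb M(X)}\Phi_G^{\,2}\,d\mu\ge0$, i.e.\ (PD).

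The main obstacle is establishing that identity. Both sides are linear in $F$, so it is enough to verify it on monomials in the $T_{i,\Lambda}$; on the $\rho$-side one expands via \eqref{qg8yodw}, on the $\mu$-side via \eqref{bvufutf}, and one must check that the two expansions agree stratum by stratum. This is exactly the Rota--Wallstrom bookkeeping: a product $T_{i_1,\Lambda}\dotsm T_{i_k,\Lambda}$ decomposes according to which atoms coincide, producing precisely the measures $M_{j_1,\dots,j_m}$ that appear in \eqref{qg8yodw}, and the diagonal kernels $\mathcal I_i$ are designed so that the $\mu$-side reproduces the same combination. Carrying this out carefully, controlling the polynomial approximation of the $s$-indicators uniformly by means of the estimates of Lemma~\ref{lhgiuty9}, is the technical heart of the argument. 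Once (LB) and (PD) are secured, Theorem~\ref{bhftuf7urf7} delivers the unique point process $\nu$ with correlation measure $\rho$.
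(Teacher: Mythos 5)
Your proposal follows essentially the same route as the paper: the same reduction to Theorem~\ref{bhftuf7urf7}, the identical (LB) computation via $\sup_{s\in A}s^{-1}$ and \eqref{re7u0ouy08} plus (C2), and for (PD) the same key idea of pulling $K$-transforms back through diagonal kernels $\mathcal R_{i_1,\dots,i_n}$ to the measure $M$ on $\Psi(X)$, whose $\otimes$-positive definiteness (Lemma~\ref{utft7u7r}) is exactly where the existence of $\mu$ enters, followed by polynomial approximation of the $s$-indicators justified by the Carleman-type bounds. The two pieces you flag as the technical heart --- the stratum-by-stratum matching of the $\star$/$\diamond$ combinatorics with the tensor product under $\mathcal K$, and the uniform $L^2$ control of the polynomial approximation across all pairing terms --- are precisely the content of the paper's Lemmas~\ref{gfdtyeri7r6t}, \ref{bhgyuft} and \ref{gufry7f} (the latter via the auxiliary measures $Z_{n,N}$), so your sketch is correct and correctly locates the remaining work.
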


In view of Theorem~\ref{bhftuf7urf7}, it suffices to prove that $\rho$ satisfies (LB) and (PD). We split the proof into several lemmas.

\begin{lemma} The measure $\rho$ defined by \eqref{gfrt57rd}--\eqref{biugfyuf} satisfies {\rm (LB).}
\end{lemma}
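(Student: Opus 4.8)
The plan is to estimate $\rho^{(n)}\big((\Lambda\times A)^n\cap\set_n\big)$ directly from the defining formula \eqref{dsrte564ew} and to reduce it to the total mass of $\xi^{(n)}_{\Lambda^{(n)}_{\widehat0}}$, which is exactly what (C2) controls. First I would exploit the structure of $\R_+$: since $A\in\mathcal B_c(\R_+)$ and $\R_+$ carries the logarithmic metric, the closure of $A$ is a compact subset of $(0,\infty)$, so $a:=\inf A>0$ and therefore $s^{-1}\le a^{-1}$ for every $s\in A$. Writing out $\rho^{(n)}$ via \eqref{dsrte564ew} and bounding the singular weight on $A^n$ gives
\begin{align}
\rho^{(n)}\big((\Lambda\times A)^n\cap\set_n\big)
&=\int_{\set_n}\chi_{\Lambda^{(n)}_{\widehat0}}(x_1,\dots,x_n)\,\chi_{A^n}(s_1,\dots,s_n)\,(s_1\dotsm s_n)^{-1}\,d\xi^{(n)}\notag\\
&\le a^{-n}\int_{\set_n}\chi_{\Lambda^{(n)}_{\widehat0}}(x_1,\dots,x_n)\,\chi_{A^n}(s_1,\dots,s_n)\,d\xi^{(n)}.\notag
\end{align}
Since $\Lambda\in\mathcal B_c(X)$, the set $\Delta:=\Lambda^{(n)}_{\widehat0}$ belongs to $\mathcal B_c(X^{(n)}_{\widehat0})$, so by \eqref{ufru8} the remaining integral equals $\xi^{(n)}_{\Lambda^{(n)}_{\widehat0}}(A^n)\le\xi^{(n)}_{\Lambda^{(n)}_{\widehat0}}\big((\R_+)^n\big)$.

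Next I would identify this total mass with a zeroth moment. By \eqref{tyde6ue6iut} evaluated at $\mathbf i=(0,\dots,0)$ together with \eqref{ude7re7uvt},
$$\xi^{(n)}_{\Lambda^{(n)}_{\widehat0}}\big((\R_+)^n\big)=\xi^{\Lambda^{(n)}_{\widehat0}}_{0,\dots,0}=\frac1{n!}\,M_{1,\dots,1}\big(\Lambda^{(n)}_{\widehat0}\big)=\frac1{n!}\,M^{(n)}\big(\Lambda^{(n)}_{\widehat0}\big),$$
where the last equality holds because, for the zero partition $\widehat0$, the map $B_{\widehat0}$ is the identity and hence $M_{1,\dots,1}$ is just the restriction of $M^{(n)}$ to $X^{(n)}_{\widehat0}$. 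Applying condition (C2) yields $\frac1{n!}M^{(n)}(\Lambda^{(n)}_{\widehat0})\le (C'_\Lambda)^n$, and combining the estimates gives
$$\rho^{(n)}\big((\Lambda\times A)^n\cap\set_n\big)\le \big(a^{-1}C'_\Lambda\big)^n,\quad n\in\mathbb N.$$
Thus one may take $\operatorname{const}_{\Lambda,A}:=a^{-1}C'_\Lambda$, establishing the uniform exponential bound required by (LB).

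For the shrinking clause, I would keep $A$ (hence $a=\inf A>0$) fixed and let $\Lambda_k\downarrow\varnothing$. The second part of condition (C2) ensures $C'_{\Lambda_k}\to0$, so $\operatorname{const}_{\Lambda_k,A}=a^{-1}C'_{\Lambda_k}\to0$ as $k\to\infty$, which is precisely the decay demanded by (LB). The only delicate point in the argument is the integrability of the singular weight $(s_1\dotsm s_n)^{-1}$ against $\xi^{(n)}$; this is handled entirely by the compactness of $\overline A$ in $\R_+$, equivalently $\inf A>0$, which bounds the weight by $a^{-n}$. This is the step that makes the reduction to the moment mass $M^{(n)}(\Lambda^{(n)}_{\widehat0})$ possible and is the main—though mild—obstacle, since away from this bound the weight could blow up near $s_i=0$.
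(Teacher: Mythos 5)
Your proof is correct and follows essentially the same route as the paper: bound the weight $(s_1\dotsm s_n)^{-1}$ on $A^n$ by $(\sup_{s\in A}s^{-1})^n$ using compactness of $\overline A$ in $\R_+$, reduce the remaining mass via \eqref{ufru8} and \eqref{tyde6ue6iut} to $\xi^{\Lambda^{(n)}_{\widehat0}}_{0,\dots,0}=\frac1{n!}M^{(n)}(\Lambda^{(n)}_{\widehat0})$, and invoke (C2) for both the exponential bound and the shrinking clause. No gaps.
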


\begin{proof} Let $A\in\mathcal B_c( \R_+)$ and let $C:=\sup_{s\in A}s^{-1}$. Then we see by
\eqref{dsrte564ew} that
\begin{equation}\label{tyrde65e}
\rho^{(n)}((\Lambda\times A)^n\cap \set_n)\le C^n\xi^{(n)}((\Lambda\times \R_+)^n\cap \set_n)\end{equation}
for each $\Lambda\in\mathcal B_c(X)$.
By \eqref{ude7re7uvt}, \eqref{tyde6ue6iut}, Lemma~\ref{tyre75r86dss},
\begin{align}
\xi^{(n)}((\Lambda\times \R_+)^n\cap \set_n)&=\xi^{(n)}_{\Lambda^{(n)}_{\widehat 0}}(( \R_+)^n)\notag\\
&=\xi_{0,\dots,0}^{\Lambda^{(n)}_{\widehat 0}}\notag\\
&=\frac1{n!}\,M_{1,\dots,1}(\Lambda^{(n)}_{\widehat 0})\notag\\
&=\frac1{n!}M^{(n)}(\Lambda^{(n)}_{\widehat 0}).\label{re7u0ouy08}
\end{align}
Condition (LB) now follows from \eqref{tyrde65e} and \eqref{re7u0ouy08}, and condition (C2). \end{proof}

 In order to prove (PD), we rewrite this condition for non-symmetrized product of spaces. We denote
$$\Phi(Y):=\bigcup_{n=0}^\infty \Phi^{(n)}(Y),$$
where the set $\Phi^{(0)}(Y)$ contains just one element, and for $n\in\mathbb N$, $\Phi^{(n)}(Y):=\set_n$.
We define a $\sigma$-algebra $\mathcal B(\Phi(Y))$ on $\Phi(Y)$ so that, for each $n=0,1,2,\dots$, $\Phi^{(n)}(Y)\in\mathcal B(\Phi(Y))$  and for each $n\in\mathbb N$, the restriction of $\mathcal B(\Phi(Y))$ to $\Phi^{(n)}(Y)$ coincides with $\mathcal B(\set_n)$. We can  treat $\rho$ as a measure on $\Phi(Y)$, so that $\rho(\Phi^{(0)}(Y))=1$ and, for $n\in\mathbb N$, the restriction of $\rho$ to $\Phi^{(n)}(Y)$ is $\rho^{(n)}$.  We call a function $G:\Phi(Y)\to\R$ symmetric if, for each $n\in\mathbb N$, the restriction of $G$ to $\Phi^{(n)}(Y)$ is a symmetric function.
Clearly, each function $G$ on $\Gamma_0(Y)$  identifies   a symmetric function on $\Phi(Y)$, for which we preserve the notation $G$. Furthermore, for an integrable function $G$, we then have
$\int_{\Gamma_0(Y)}G\,d\rho=\int_{\Phi(Y)}G\,d\rho$.

Let $m,n\in\mathbb N$. We denote by $\operatorname{Pair}(m,n)$ the collection of all subsets $\varkappa$ of the set
$$\{1,2,\dots,m\}\times\{m+1,m+2,\dots,m+n\}$$
such that, if $(\alpha_i,\beta_i),(\alpha_j,\beta_j)\in\varkappa$ and $(\alpha_i,\beta_i)\ne(\alpha_j,\beta_j)$, then $\alpha_i\ne\alpha_j$ and $\beta_i\ne\beta_j$. By definition, an empty set  is an element of $\operatorname{Pair}(m,n)$.  For $\varkappa\in\operatorname{Pair}(m,n)$, we denote by
$|\varkappa|$ the number of elements of the set $\varkappa$. In words, this means that we build $|\varkappa|$ pairs between elements in $\{1,2,\dots,m\}$ and in $\{m+1,m+2,\dots,m+n\}$. Each element can be member of only one pair.

Let $G_1^{(m)}:\set_m\to\R$, $G_2^{(n)}:\set_n\to\R$, and let $\varkappa=\{(\alpha_i,\beta_i)\}\in \operatorname{Pair}(m,n)$.
We define a function $(G_1^{(m)}\otimes G_2^{(n)})_\varkappa:\set_{m+n-|\varkappa|}\to\R$ as follows. Relabel so that
$$\beta_1<\beta_2<\dots<\beta_{|\varkappa|}.$$
Then $(G_1^{(m)}\otimes G_2^{(n)})_\varkappa (y_1,\dots,y_{m+n-|\varkappa|})$ is defined as follows.
Take
$$ G_1^{(m)}(y_1,\dots,y_m)G_2^{(n)}(z_{m+1},\dots,z_{m+n}). $$
For each $i\in\{1,\dots,|\varkappa|\}$, replace the variable $z_{\beta_i}$ with $y_{\alpha_i}$. After this, the variables $z_j$ with $j\in\{m+1,\dots,m+n\}\setminus\{\beta_1,\dots,\beta_{|\varkappa|}\}$ (these are the remaining $z_j$)  are consecutively set to the values $y_{m+1},y_{m+2},\dots, y_{m+n-|\varkappa|}$.
Here, $y_l:=(x_l,s_l)$. In words, this means that $G_1^{(m)}$ and $G_2^{(n)}$ share some of the $y_i$ variables whose indices and positions in the arguments of $G_1^{(m)}$ and $G_2^{(n)}$ are described by the pairs in $\varkappa$. 

For example, for $m=3$, $n=4$, $\varkappa=\{(3,5),(2,6)\}$,
we have
$$ (G_1^{(3)}\otimes G_2^{(4)})_\varkappa(y_1,y_2,y_3,y_4,y_5)=G_1^{(3)}(y_1,y_2,y_3)G_2^{(4)}(y_4,y_3,y_2,y_5),\quad (y_1,y_2,y_3,y_4,y_5)\in \set_5.$$

Let us interpret  $G_1^{(m)}:\set_m\to\R$ and $G_2^{(n)}:\set_n\to\R$ as functions defined on $\Phi(Y)$ which vanish outside $\Phi^{(m)}(Y)$ and $\Phi^{(n)}(Y)$, respectively. We then define a function
$$G_1^{(m)}\diamond G_2^{(n)}:\Phi(Y)\to\R$$
by
\begin{equation}\label{vtyftuykrf}
G_1^{(m)}\diamond G_2^{(n)}:=\sum_{\varkappa\in \operatorname{Pair}(m,n)}\frac{(m+n-|\varkappa|)!}{m!\,n!}
(G_1^{(m)}\otimes G_2^{(n)})_\varkappa.
\end{equation}
In the above formula, each $(G_1^{(m)}\otimes G_2^{(n)})_\varkappa$ is also treated as a function on $\Phi(Y)$.

Note that a function $G_1^{(0)}:\Phi^{(0)}(Y)\to\R$ is just a real number and we  set, for each function $G_2:\Phi(Y)\to\R$,
\begin{equation}\label{tyre756e7}
G_1^{(0)}\diamond G_2=G_2\diamond G_1^{(0)}:=G_1^{(0)}\cdot G_2.
\end{equation}
Extending formulas \eqref{vtyftuykrf}, \eqref{tyre756e7} by linearity, we define, for any functions
$G_1,G_2:\Phi(Y)\to\R$, their  $\diamond$-product $G_1\diamond G_2$ as a function on $\Phi(Y)$.

\begin{lemma} \label{gfdtyeri7r6t} Assume that $G_1$ and $G_2$ are symmetric functions on $\Phi(Y)$ which vanish outside the set $\bigcup_{n=0}^N\Phi^{(n)}(Y)$ for some $N\in\mathbb N$. Then
$$\int_{\Phi(Y)}G_1\star G_2\,d\rho=\int_{\Phi(Y)}G_1\diamond G_2\,d\rho,$$
provided the integrals in the above formulas make sense.
\end{lemma}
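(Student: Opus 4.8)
The plan is to exploit the bilinearity of both the $\star$-product \eqref{yufru7rdddrdsd} and the $\diamond$-product \eqref{vtyftuykrf}, reducing the identity to its homogeneous components and then matching the two sides degree by degree using nothing more than the symmetry of $\rho$. Writing $G_i^{(m)}:=G_i\restriction\Phi^{(m)}(Y)$, the hypotheses give $G_i=\sum_{m=0}^N G_i^{(m)}$ with each $G_i^{(m)}$ symmetric and compactly supported, so $G_1\star G_2=\sum_{m,n}G_1^{(m)}\star G_2^{(n)}$ and $G_1\diamond G_2=\sum_{m,n}G_1^{(m)}\diamond G_2^{(n)}$ are finite sums, and (LB) guarantees every term is $\rho$-integrable (so the rearrangements and the hypothesis that the integrals make sense are justified). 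It therefore suffices to prove, for each pair $(m,n)$, that $\int_{\Phi(Y)}G_1^{(m)}\star G_2^{(n)}\,d\rho=\int_{\Phi(Y)}G_1^{(m)}\diamond G_2^{(n)}\,d\rho$. The cases $m=0$ or $n=0$ are immediate from \eqref{tyre756e7}, so I would assume $m,n\ge1$.

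Fix $m,n\ge 1$ and a total degree $k$ with $\max(m,n)\le k\le m+n$, and put $p:=m+n-k$; I would treat the contribution of $\Phi^{(k)}(Y)=\set_k$ to each side. On the $\star$-side, the restriction of $G_1^{(m)}\star G_2^{(n)}$ to a $k$-point configuration $\{y_1,\dots,y_k\}$ is, by \eqref{yufru7rdddrdsd}, the sum of $G_1^{(m)}(\{y_i\}_{i\in S_1})\,G_2^{(n)}(\{y_j\}_{j\in S_2})$ over ordered pairs of index sets $S_1,S_2\subseteq\{1,\dots,k\}$ with $|S_1|=m$, $|S_2|=n$ and $S_1\cup S_2=\{1,\dots,k\}$, whence $|S_1\cap S_2|=p$. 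Rewriting the integral over $\Gamma^{(k)}(Y)$ as an integral over $\set_k$ against the symmetric measure $\rho^{(k)}$ via \eqref{biugfyuf}, and using that $\rho^{(k)}$, $G_1^{(m)}$ and $G_2^{(n)}$ are symmetric, every such term integrates to one common value $V$: any two admissible pairs $(S_1,S_2)$ and $(S_1',S_2')$ induce matching partitions of $\{1,\dots,k\}$ into blocks of sizes $p,m-p,n-p$, hence differ by a coordinate permutation under which $\rho^{(k)}$ is invariant. Since the number of admissible pairs is $\binom{k}{p}\binom{k-p}{m-p}$, the $\Phi^{(k)}$-contribution of the $\star$-side equals $\binom{k}{p}\binom{k-p}{m-p}\,V$.

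On the $\diamond$-side, only the pairings $\varkappa\in\operatorname{Pair}(m,n)$ with $|\varkappa|=p$ land in $\Phi^{(k)}(Y)$, each carrying the prefactor $\frac{(m+n-|\varkappa|)!}{m!\,n!}=\frac{k!}{m!\,n!}$. In the term $(G_1^{(m)}\otimes G_2^{(n)})_\varkappa$ the factor $G_1^{(m)}$ receives the $m$-subset $\{y_1,\dots,y_m\}$ and $G_2^{(n)}$ receives an $n$-subset overlapping it in exactly the $p$ matched arguments, so by symmetry of $\rho^{(k)}$ and of $G_1^{(m)},G_2^{(n)}$ each such term again integrates to the same $V$. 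As a partial matching of size $p$ between $\{1,\dots,m\}$ and $\{m+1,\dots,m+n\}$ is chosen in $\binom{m}{p}\binom{n}{p}p!$ ways, the $\diamond$-side contributes $\frac{k!}{m!\,n!}\binom{m}{p}\binom{n}{p}p!\,V$. I would then close the argument with the elementary identity
$$\binom{k}{p}\binom{k-p}{m-p}=\frac{k!}{p!\,(m-p)!\,(n-p)!}=\frac{k!}{m!\,n!}\binom{m}{p}\binom{n}{p}p!,$$
so the two $\Phi^{(k)}$-contributions coincide; summing over $k$ (equivalently, over $|\varkappa|$) then gives the lemma.

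The main obstacle is purely combinatorial bookkeeping: one must recognise that the symmetry of $\rho$ collapses all ``same-type'' terms (equal block sizes $m,n$ and equal overlap $p$) to a single number $V$, and then verify that the count $\binom{k}{p}\binom{k-p}{m-p}$ of overlapping subset-pairs on the $\star$-side matches the count $\binom{m}{p}\binom{n}{p}p!$ of size-$p$ matchings on the $\diamond$-side once the normalisation $\frac{(m+n-|\varkappa|)!}{m!\,n!}$ is folded in. Checking this last count is precisely what dictates the particular combinatorial factor built into the definition \eqref{vtyftuykrf} of $\diamond$, and no analytic input beyond the integrability supplied by (LB) is required.
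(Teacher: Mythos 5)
Your proposal is correct and follows essentially the same route as the paper's proof: reduce by bilinearity to homogeneous components $G_1^{(m)},G_2^{(n)}$, expand the $\star$-product integral over ordered partitions of $\{1,\dots,m+n-|\varkappa|\}$ into three blocks, use the symmetry of $\rho^{(m+n-|\varkappa|)}$ to collapse all same-type terms to a single value, and match the count $\frac{k!}{p!\,(m-p)!\,(n-p)!}$ against the number of size-$p$ pairings times the $\diamond$-normalisation. The only differences are notational (your $k$ is the total degree rather than the overlap) and your slightly more explicit treatment of the degenerate cases and of integrability.
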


\begin{proof}
It suffices to consider the case where $G_1=G_1^{(m)}:\set_m\to\R$, $G_2=G_2^{(n)}:\set_n\to\R$ for some $m,n\in\mathbb N$. Using \eqref{yufru7rdddrdsd},  we have
\begin{multline*}
\int_{\Phi(Y)}G_1^{(m)}\star G_2^{(n)}\,d\rho\\
=\sum_{k=0}^{m\wedge n}
\sum_{\substack{(\theta_1,\theta_2,\theta_3)\in\mathscr P_3(m+n-k)\\
|\theta_1|=m-k,\, |\theta_2|=k,\, |\theta_3|=n-k}}
\int_{\set_{m+n-k}} G_1^{(m)}(y_{\theta_1},y_{\theta_2})G_2^{(n)}(y_{\theta_2},y_{\theta_3})\,d\rho^{(m+n-k)}(y_1,\dots,y_{m+n-k}).
\end{multline*}
Here $\mathscr P_3(m+m-k)$ denotes the set of all ordered partitions $(\theta_1,\theta_2,\theta_3)$ of the set $\{1,\dots,m+n-k\}$ into three parts, $|\theta_i|$ denotes the number of elements in block $\theta_i$, and, for  block $\theta_i=\{r_1,r_2,\dots,r_{|\theta_i|}\}$,
$y_{\theta_i}$ denotes $y_{r_1},y_{r_2},\dots,y_{r_{|\theta_i|}}$.
Evidently, the set $\mathscr P_3(m+n-k)$ contains $\frac{(m+n-k)!}{(m-k)!\,(n-k)!\,k!}$ elements $(\theta_1,\theta_2,\theta_3)$ such that $|\theta_1|=m-k$, $|\theta_2|=k$, $|\theta_3|=n-k$. Hence
\begin{multline}
\int_{\Phi(Y)}G_1^{(m)}\star G_2^{(n)}\,d\rho
=
\sum_{k=0}^{m\wedge n}\frac{(m+n-k)!}{(m-k)!\,(n-k)!\,k!}\\
\times \int_{\set_{m+n-k}}G_1^{(m)}(x_1,\dots,x_m)G_2^{(n)}(x_{m-k+1},\dots,x_{m+n-k})\,d\rho^{(m+n-k)}(x_1,\dots,x_{m+n-k}).\label{fyt76r76re7y}
\end{multline}
On the other hand, by \eqref{vtyftuykrf},
$$
\int_{\Phi(Y)}G_1^{(m)}\diamond G_2^{(n)}\,d\rho
=\sum_{k=0}^{m\wedge n}\frac{(m+n-k)!}{m!\,n!}\sum_{\substack{\varkappa\in\operatorname{Pair}(m,n)\\|\varkappa|=k}}
\int_{\set_{m+n-k}}(G_1^{(m)}\otimes G_2^{(n)})_\varkappa\,d\rho^{(m+n-k)}.$$
An easy combinatoric argument shows that there are
$$\frac{m!}{(m-k)!\,k!}\times\frac{n!}{(n-k)!\,k!}\times k!=\frac{m!\,n!}{(m-k)!\, (n-k)!\,k!}
 $$ elements $\varkappa\in\operatorname{Pair}(m,n)$ such that $|\varkappa|=k$. Hence
 \begin{align}
&\int_{\Phi(Y)}G_1^{(m)}\diamond G_2^{(n)}\,d\rho
=\sum_{k=0}^{m\wedge n}\frac{(m+n-k)!}{m!\,n!} \times
\frac{m!\,n!}{(m-k)!\, (n-k)!\,k!} \notag\\
&\times \int_{\set_{m+n-k}}G_1^{(m)}(x_1,\dots,x_m)G_2^{(n)}(x_{m-k+1},\dots,x_{m+n-k})\,d\rho^{(m+n-k)}(x_1,\dots,x_{m+n-k}).\label{tye65e7i4ero789}
 \end{align}
 By \eqref{fyt76r76re7y} and \eqref{tye65e7i4ero789} the lemma follows.
 \end{proof}

 We denote
 $$\Psi(X):=\bigcup_{n=0}^\infty\Psi^{(n)}(X),$$
 where the set $\Psi^{(0)}(X)$ contains one element, and for $n\in\mathbb N$, $\Psi^{(n)}(X):=X^n$.
 Analogously to $\mathcal B(\Phi(Y))$, we define the $\sigma$-algebra $\mathcal B(\Psi(X))$. We next define a measure $M$ on $(\Psi(X),\mathcal B(\Psi(X)))$ so that $M(\Psi^{(0)}(X)):=M^{(0)}=1$ and, for $n\in\mathbb N$, the restriction of $M$ to $\Psi^{(n)}(X)$ is $M^{(n)}$. For any functions $F_1^{(m)}$ and $F_2^{(n)}$ on $\Psi^{(m)}(X)$ and $\Psi^{(n)}(X)$, respectively, their tensor product $F_1^{(m)}\otimes F_2^{(n)}$ is a function on $\Psi^{(m+n)}(X)$. (In the case where either $m$ or $n$ is equal to zero, the tensor product becomes a usual product.) Extending the tensor product by linearity, we define, for any functions $F_1$ and $F_2$ on $\Psi(X)$, their tensor product $F_1\otimes F_2$ as a function on $\Psi(X)$.

The following lemma shows that the measure $M$ on $\Psi(X)$ is $\otimes$-positive definite.

\begin{lemma}\label{utft7u7r}
Assume that a function $F$ on $\Psi(X)$ vanishes outside a set $\bigcup_{n=0}^N \Psi^{(n)}(X)$ for some $N\in\mathbb N$. Assume that the function $F\otimes F$ is integrable with respect to $M$. Then
\begin{equation}\label{yitu8r8fy7de}
 \int_{\Psi(X)}F\otimes F\,dM\ge0. \end{equation}
\end{lemma}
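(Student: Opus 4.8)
The plan is to read off the claimed inequality from the fact that $(M^{(n)})_{n=0}^\infty$ genuinely arises as the moment sequence of a probability measure $\mu$ on $\mathbb M(X)$: the $\otimes$-positive definiteness of $M$ is nothing but the nonnegativity of the $\mu$-expectation of a square. First I would decompose $F=\sum_{n=0}^N F^{(n)}$, where $F^{(n)}$ is the restriction of $F$ to $\Psi^{(n)}(X)=X^n$. Then $F\otimes F$ is supported on $\bigcup_{k=0}^{2N}\Psi^{(k)}(X)$, and its restriction to $\Psi^{(k)}(X)$ is the level-$k$ component $(F\otimes F)^{(k)}=\sum_{m+n=k}F^{(m)}\otimes F^{(n)}$.

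The crucial ingredient is the moment relation coming from the definition of $M^{(n)}$ as the $n$-th moment measure of $\mu$, cf.\ \eqref{bvufutf}: extended from $C_0(X^n)$ to nonnegative measurable integrands by monotone convergence (and then to signed $M^{(k)}$-integrable integrands by splitting into positive and negative parts), one has
$$\int_{X^k}f^{(k)}\,dM^{(k)}=\int_{\mathbb M(X)}\langle\eta^{\otimes k},f^{(k)}\rangle\,d\mu(\eta).$$
Applying this level by level to $f^{(k)}=(F\otimes F)^{(k)}$, and using that for a tensor product the bracket factorizes by Fubini's theorem on the product measure $\eta^{\otimes(m+n)}$, namely $\langle\eta^{\otimes(m+n)},F^{(m)}\otimes F^{(n)}\rangle=\langle\eta^{\otimes m},F^{(m)}\rangle\,\langle\eta^{\otimes n},F^{(n)}\rangle$, I obtain, pointwise in $\eta$,
$$\sum_{k=0}^{2N}\langle\eta^{\otimes k},(F\otimes F)^{(k)}\rangle=\Big(\sum_{n=0}^N\langle\eta^{\otimes n},F^{(n)}\rangle\Big)^2\ge0.$$
Summing the level-wise moment identities over $k$ and interchanging the (finite) sum with the $\mu$-integral then yields
$$\int_{\Psi(X)}F\otimes F\,dM=\int_{\mathbb M(X)}\Big(\sum_{n=0}^N\langle\eta^{\otimes n},F^{(n)}\rangle\Big)^2\,d\mu(\eta)\ge0,$$
since $\mu$ is a nonnegative measure, which is exactly \eqref{yitu8r8fy7de}.

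The only real obstacle is the measure-theoretic bookkeeping: I must justify (a) applying the moment relation to the \emph{signed} integrand $(F\otimes F)^{(k)}$, (b) the Fubini factorization of $\langle\eta^{\otimes k},\,\cdot\,\rangle$, and (c) the interchange of the finite sum over $k$ with the $\mu$-integral. All three are controlled by the standing hypothesis that $F\otimes F$ is $M$-integrable, which reads $\sum_{k=0}^{2N}\int_{X^k}|(F\otimes F)^{(k)}|\,dM^{(k)}<\infty$; since only the finitely many levels $k\le 2N$ occur, each summand is finite. This finiteness makes each $(F\otimes F)^{(k)}$ an $M^{(k)}$-integrable signed function (legitimizing (a)), bounds $\int_{\mathbb M(X)}|\langle\eta^{\otimes k},(F\otimes F)^{(k)}\rangle|\,d\mu$ by $\int_{X^k}|(F\otimes F)^{(k)}|\,dM^{(k)}<\infty$ (legitimizing the interchange in (c) by dominated convergence over a finite index set), and guarantees that for $\mu$-a.e.\ $\eta$ the relevant integrals against $\eta^{\otimes k}$ are finite so that the factorization (b) holds. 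In the intended application the functions $F^{(n)}$ are finite linear combinations of indicators of compact boxes, so that all these integrals are automatically finite by condition (C1); I would, if needed, reduce the general case to this one by truncation. Note finally that, as the accompanying remark stresses, no use is made here of (C2), (i) or (ii), nor of the positivity/support structure on $\R_+$ — only of the fact that $(M^{(n)})_{n=0}^\infty$ are the moments of a bona fide probability measure $\mu$.
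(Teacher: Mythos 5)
Your proof is correct and follows essentially the same route as the paper, whose own proof is a one-liner asserting that the claim "immediately follows" from the moment identity $\int_{\mathbb M(X)}\langle\eta^{\otimes n},F^{(n)}\rangle\,d\mu(\eta)=\int_{X^n}F^{(n)}\,dM^{(n)}$; you have simply written out the level-by-level decomposition, the factorization of the pairing over tensor products, and the resulting identification of $\int F\otimes F\,dM$ with the $\mu$-integral of a square. The measure-theoretic bookkeeping you supply is a welcome elaboration of what the paper leaves implicit.
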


\begin{proof}
The result immediately follows from
$$\int_{\mathbb M(X)}\langle\eta^{\otimes n},F^{(n)}\rangle\,d\mu(\eta)=\int_{X^n}F^{(n)}\,dM^{(n)}.$$
\end{proof}

Let a function $g^{(n)}:X^{(n)}_{\widehat 0}\to\R$ be
 bounded, measurable, and having support from $\mathcal B_c(X^{(n)}_{\widehat 0})$. For $i_1,\dots,i_n\in\mathbb N$, we set
 \begin{equation}\label{dkjcisgv}G^{(n)}(x_1,s_1,\dots,x_n,s_n):=g^{(n)}(x_1,\dots,x_n)s_1^{i_1}\dotsm s_n^{i_n},\quad (x_1,s_1,\dots,x_n,s_n)\in \set_n.\end{equation}
 We extend the function $g^{(n)}$ by zero to the whole space $X^n$. We define a function $\mathcal R_{i_1,\dots,i_n}g^{(n)}:X^{i_1+\dots+i_n}\to\R$ by using formula \eqref{jhacvc}. We denote $G^{(0)}:\Psi^{(0)}(X)\to\R$
 \begin{equation}\label{jhzvcgv}\mathcal KG^{(n)}:=\frac1{n!}\, \mathcal R_{i_1,\dots,i_n}g^{(n)}.\end{equation}

 We denote by $\mathcal Q$ the class of all functions on $\Phi(Y)$ which are finite sums of functions of form \eqref{dkjcisgv}. Extending $\mathcal K$ by linearity, we define, for each $G\in\mathcal Q$, $\mathcal K G$ as a function on $\Psi(X)$.

\begin{lemma}\label{ygf7r76}
For each $G\in\mathcal Q$, we have
\begin{equation}\label{jhiuohgi9}
\int_{\Phi(Y)} G\,d\rho=\int_{\Psi(X)}\mathcal KG\,dM.\end{equation}
\end{lemma}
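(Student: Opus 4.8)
The plan is to verify equation \eqref{jhiuohgi9} for a single building block of the form \eqref{dkjcisgv} and then extend by linearity, since both sides of \eqref{jhiuohgi9} are linear in $G$. So I would fix $n\in\mathbb N$, a bounded measurable function $g^{(n)}$ supported on a set in $\mathcal B_c(X^{(n)}_{\widehat0})$, and exponents $i_1,\dots,i_n\in\mathbb N$, and take $G=G^{(n)}$ as in \eqref{dkjcisgv}. By the definition of $\mathcal K$ in \eqref{jhzvcgv}, the function $\mathcal KG$ is concentrated on $\Psi^{(I)}(X)$ with $I=i_1+\dots+i_n$ and equals $\frac1{n!}\mathcal R_{i_1,\dots,i_n}g^{(n)}$ there. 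Hence the right-hand side of \eqref{jhiuohgi9} reduces to
\begin{equation}
\int_{\Psi(X)}\mathcal KG\,dM=\frac1{n!}\int_{X^{I}}\mathcal R_{i_1,\dots,i_n}g^{(n)}\,dM^{(I)},\notag
\end{equation}
because $M$ restricts to $M^{(I)}$ on $\Psi^{(I)}(X)$. The left-hand side, by the definition of $\rho$ on $\Phi(Y)$ via \eqref{biugfyuf} and the form \eqref{dkjcisgv} of $G$, reduces to
\begin{equation}
\int_{\Phi(Y)}G\,d\rho=\int_{\set_n}g^{(n)}(x_1,\dots,x_n)s_1^{i_1}\dotsm s_n^{i_n}\,d\rho^{(n)}(x_1,s_1,\dots,x_n,s_n).\notag
\end{equation}

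The key step is then to recognize that the equality of these two integrals is precisely the content of \eqref{qg8yodw} (equivalently the relation derived in the proof of Theorem~\ref{ufut7fruvfguqfd} just before \eqref{gvhfcdtudf}), but read in the reverse direction. Indeed, tracing the chain of equalities in that proof — specifically the identity
\begin{equation}
\int_{\set_n}g^{(n)}(x_1,\dots,x_n)s_1^{i_1}\dotsm s_n^{i_n}\,d\rho^{(n)}=\frac1{n!}\int_{X^{(I)}_\pi}\mathcal R_{i_1,\dots,i_n}g^{(n)}\,dM^{(I)},\notag
\end{equation}
with $\pi$ the partition in \eqref{ufy7fr} — gives exactly the desired match, once one notes that $\mathcal R_{i_1,\dots,i_n}g^{(n)}$ vanishes outside $X^{(I)}_\pi$, so the integral over $X^{(I)}_\pi$ coincides with the integral over all of $X^{I}$. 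The only subtlety is that in Theorem~\ref{ufut7fruvfguqfd} the measure $\rho^{(n)}$ was the correlation measure of an already-existing point process, whereas here $\rho^{(n)}$ is defined intrinsically by \eqref{dsrte564ew}--\eqref{biugfyuf} through $\xi^{(n)}$; I would therefore re-derive the identity directly from \eqref{dsrte564ew} and \eqref{kigfti8ylt} rather than invoking the point-process interpretation, substituting $d\rho^{(n)}=(s_1\dotsm s_n)^{-1}d\xi^{(n)}$ to pass to $\xi^{(n)}$ and then applying \eqref{kigfti8ylt} with exponents shifted to $i_1,\dots,i_n$ (note that $G^{(n)}$ carries $s_l^{i_l}$ while $d\rho^{(n)}$ contributes $s_l^{-1}$, so against $\xi^{(n)}$ one integrates $s_l^{i_l-1}$, matching the index shift in the definition of $M_{i_1,\dots,i_n}$ versus $M_{i_1+1,\dots,i_n+1}$).

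The extension from a single block to arbitrary $G\in\mathcal Q$ is immediate by linearity of both sides in $G$, and the passage from a product $g^{(n)}(x_1,\dots,x_n)s_1^{i_1}\dotsm s_n^{i_n}$ to general elements of $\mathcal Q$ (finite sums of such products) requires nothing further. I expect the main obstacle to be purely bookkeeping: keeping the index shift between $i_l$ and $i_l+1$ consistent when translating between the $\rho^{(n)}$-integral (which carries the extra factor $(s_1\dotsm s_n)^{-1}$) and the $\xi^{(n)}$-integral appearing in \eqref{kigfti8ylt}, and making sure the combinatorial factor $\frac1{n!}$ appears on both sides exactly once. No deep analytic difficulty arises here, since integrability of $G$ against $\rho$ and of $\mathcal KG$ against $M$ follows from the compact-support and boundedness hypotheses on $g^{(n)}$ together with the bounds already established in Lemma~\ref{lhgiuty9} and estimate \eqref{it68r}; the lemma is essentially a reformulation of the correspondence established in Section~\ref{5tew5w}.
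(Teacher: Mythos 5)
Your proposal is correct and follows essentially the same route as the paper: the paper likewise reduces to a single block of the form \eqref{dkjcisgv} (there with $g^{(n)}=\chi_\Delta$), substitutes $d\rho^{(n)}=(s_1\dotsm s_n)^{-1}\,d\xi^{(n)}$, passes through $\xi^{(n)}_\Delta$ and the moment identity $\xi^\Delta_{i_1-1,\dots,i_n-1}=\frac1{n!}M_{i_1,\dots,i_n}(\Delta)$, and concludes by linearity and approximation. Your caution about re-deriving the analogue of \eqref{kigfti8ylt} from the intrinsic construction of $\xi^{(n)}$ (Lemma~\ref{tyre75r86dss} and \eqref{tyde6ue6iut}) rather than from the point-process interpretation is exactly right and is what the paper's chain of equalities implicitly does.
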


\begin{proof}  Let $\Delta\in\mathcal B_c(X^{(n)}_{\widehat 0})$ and let $G^{(n)}$ be given by \eqref{dkjcisgv} with $g^{(n)}=\chi_\Delta$. By Lemma~\ref{tyre75r86dss} and
formulas \eqref{ude7re7uvt}, \eqref{tyde6ue6iut},  \eqref{dsrte564ew}, and \eqref{jhzvcgv},
\begin{align*}
\int_{\set_n}G^{(n)}\,d\rho^{(n)}&=\int_{\set_n}\chi_\Delta(x_1,\dots,x_n)s_1^{i_1}\dotsm s_n^{i_n}\,d\rho^{(n)}(x_1,s_1,\dots,x_n,s_n)\\
&=\int_{\set_n}\chi_\Delta(x_1,\dots,x_n)s_1^{i_1-1}\dotsm s_n^{i_n-1}\,d\xi^{(n)}(x_1,s_1,\dots,x_n,s_n)\\
&=\int_{( \R_+)^n}s_1^{i_1-1}\dotsm s_n^{i_n-1}\,d\xi^{(n)}_\Delta(s_1,\dots,s_n)\\
&=\xi^\Delta_{i_1-1,\dots,i_n-1}\\
&=\frac1{n!}\, M_{i_1,\dots,i_n}(\Delta)\\
&=\int_{X^{i_1+\dots+i_n}}\frac1{n!}\,\mathcal R_{i_1,\dots,i_n}\chi_\Delta \,dM^{(i_1+\dots+i_n)}\\
&= \int_{\Psi(X)} \mathcal K G^{(n)}\,dM.
\end{align*}
From here it easily follows by linearity and approximation that formula \eqref{jhiuohgi9} holds for each $G\in\mathcal Q$.
\end{proof}

\begin{lemma}\label{bhgyuft}
For each $G\in\mathcal Q$,
$$\int_{\Phi(Y)}G\diamond G\,d\rho\ge0.$$
\end{lemma}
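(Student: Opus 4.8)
The plan is to transport the inequality from the ``correlation side'' $\Phi(Y)$, where $\diamond$ lives, to the ``moment side'' $\Psi(X)$, where positive definiteness is already available from Lemma~\ref{utft7u7r}. The whole argument rests on the observation that the transform $\mathcal K$ intertwines the $\diamond$-product with the tensor product $\otimes$: for all $G_1,G_2\in\mathcal Q$ I claim
$$\int_{\Psi(X)}\mathcal K(G_1\diamond G_2)\,dM=\int_{\Psi(X)}(\mathcal K G_1)\otimes(\mathcal K G_2)\,dM. $$
Granting this, the proof is short. By bilinearity of $\diamond$ and of $\otimes$ it suffices to treat monomials of the form \eqref{dkjcisgv}, and I first note that $\mathcal Q$ is closed under $\diamond$: by \eqref{vtyftuykrf} each $(G_1^{(m)}\otimes G_2^{(n)})_\varkappa$, restricted to $\set_{m+n-|\varkappa|}$, is again of the monomial shape \eqref{dkjcisgv}, the base-point function being the product of $g_1$ and $g_2$ with the paired variables identified (with compact support inside $X^{(m+n-|\varkappa|)}_{\widehat0}$), and the weight exponents at paired positions being the \emph{sum} of the two exponents. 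Hence $G\diamond G\in\mathcal Q$ and Lemma~\ref{ygf7r76} gives $\int_{\Phi(Y)}G\diamond G\,d\rho=\int_{\Psi(X)}\mathcal K(G\diamond G)\,dM$.

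The heart of the matter is the intertwining identity, which I would prove for monomials $G_1^{(m)},G_2^{(n)}$. Both $\mathcal K(G_1\diamond G_2)$ and $(\mathcal K G_1)\otimes(\mathcal K G_2)$ are functions on $\Psi^{(I_1+I_2)}(X)$, where $I_1,I_2$ are the total weight degrees, and by \eqref{jhzvcgv} the latter equals $\frac1{m!\,n!}(\mathcal R_{i_1,\dots,i_m}g_1)\otimes(\mathcal R_{j_1,\dots,j_n}g_2)$. I would decompose the support of this tensor product according to which of the $m$ clusters produced by $\mathcal R$ from $G_1$ share a base point with which of the $n$ clusters produced from $G_2$. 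Because the clusters inside each factor are already forced to have distinct base points by the supports of $g_1$ and $g_2$ in $X^{(\cdot)}_{\widehat0}$, such a coincidence pattern is \emph{exactly} a partial matching, i.e.\ an element $\varkappa\in\operatorname{Pair}(m,n)$; the decomposition is therefore a genuine partition (disjoint and exhaustive, so no inclusion--exclusion is needed). On the part indexed by $\varkappa$ with $|\varkappa|=k$, the paired clusters merge into clusters whose size, hence whose weight exponent, is the sum of the two block exponents — which is precisely the cluster that $\mathcal R$ generates when applied to $(G_1^{(m)}\otimes G_2^{(n)})_\varkappa$ via \eqref{jhacvc}. The factorial prefactors match exactly, since the weight $\frac{(m+n-|\varkappa|)!}{m!\,n!}$ from \eqref{vtyftuykrf} cancels the $\frac1{(m+n-|\varkappa|)!}$ arising in \eqref{jhzvcgv}, leaving the common factor $\frac1{m!\,n!}$. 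Summing over $\varkappa$ thus reproduces $\frac1{m!\,n!}(\mathcal R g_1)\otimes(\mathcal R g_2)$ up to a permutation of the coordinates (hidden in the relabelling $\beta_1<\dots<\beta_{|\varkappa|}$), so after integrating against the symmetric measure $M^{(I_1+I_2)}$ the two sides coincide.

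With the identity in hand, set $F:=\mathcal K G$. Since $G\in\mathcal Q$ is a finite sum over finitely many $\set_n$ with bounded, compactly supported base functions, $F$ vanishes outside $\bigcup_{n=0}^{N'}\Psi^{(n)}(X)$ for some $N'$, and $F\otimes F$ is $M$-integrable because each summand is bounded with support in a set $\Lambda^N$ of finite $M^{(N)}$-measure by (C1) (cf.\ Lemma~\ref{lhgiuty9}). Therefore, combining the two displayed equalities with Lemma~\ref{utft7u7r},
$$\int_{\Phi(Y)}G\diamond G\,d\rho=\int_{\Psi(X)}\mathcal K(G\diamond G)\,dM=\int_{\Psi(X)}F\otimes F\,dM\ge0, $$
which is the assertion. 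The main obstacle is precisely the combinatorial bookkeeping in the second paragraph: one must verify carefully that the coincidence-pattern decomposition is an exact partition indexed by $\operatorname{Pair}(m,n)$ and that the coordinate relabelling built into $(G_1^{(m)}\otimes G_2^{(n)})_\varkappa$ is reconciled, through the symmetry of $M$, with the fixed cluster ordering produced by $\mathcal R$; everything else is routine linearity and the already established Lemmas~\ref{ygf7r76} and~\ref{utft7u7r}.
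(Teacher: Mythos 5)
Your proposal is correct and follows essentially the same route as the paper: both establish the intertwining identity $\int_{\Psi(X)}\mathcal K(G_1\diamond G_2)\,dM=\int_{\Psi(X)}\mathcal KG_1\otimes\mathcal KG_2\,dM$ by decomposing the support of $(\mathcal R g_1)\otimes(\mathcal R g_2)$ according to coincidence patterns of cluster base points (which are exactly elements of $\operatorname{Pair}(m,n)$), invoking the symmetry of $M$ to absorb the relabelling, and then conclude via Lemma~\ref{utft7u7r} and Lemma~\ref{ygf7r76}. The factorial bookkeeping you describe matches the paper's computation.
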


\begin{proof}  Let  functions $g_1^{(m)}:X_{\widehat 0}^{(m)}\to\R$ and $g_2^{(n)}:X_{\widehat 0}^{(n)}\to\R$  be bounded, measurable, and having support from $\mathcal B_c(X^{(m)}_{\widehat 0})$ and $\mathcal B_c(X^{(n)}_{\widehat 0})$, respectively. Let $i_1,\dots,i_m,j_1,\dots,j_n\in\mathbb N$.
Let
\begin{align*}
G_1^{(m)}(x_1,s_1,\dots,x_m,s_m):&=g_1^{(m)}(x_1,\dots,x_m)s_1^{i_1}\dotsm s_n^{i_m},\quad (x_1,s_1,\dots,x_m,s_m)\in \set_m\\
G_2^{(n)}(x_1,s_1,\dots,x_n,s_n):&=g_2^{(n)}(x_1,\dots,x_n)s_1^{j_1}\dotsm s_n^{j_n},\quad (x_1,s_1,\dots,x_n,s_n)\in \set_n.
\end{align*}
Then, by \eqref{jhacvc} and \eqref{jhzvcgv},
\begin{align}
&(\mathcal KG_1^{(m)}\otimes \mathcal K G_2^{(n)})(x_1,\dots,x_{i_1+\dots+i_m+j_1+\dots+j_n})\notag\\
&=\frac1{m!\,n!}\,(\mathcal R_{i_1,\dots,i_m}g_1^{(m)}\otimes\mathcal R_{j_1,\dots,j_n}g_2^{(n)})(x_1,\dots,x_{i_1+\dots+i_m+j_1+\dots+j_n})\notag\\
&=\frac1{m!\,n!}\, g_1^{(m)}(x_1,x_{i_1+1},\dots,x_{i_1+\dots+i_{m-1}+1})\notag\\
&\quad\times g_2^{(n)}(x_{i_1+\dots+i_m+1},x_{i_1+\dots+i_m+j_1+1},\dots,
x_{i_1+\dots+i_m+j_1+\dots+j_{n-1}+1})\notag\\
&\quad\times\mathcal I_{i_1,\dots,i_m}(x_1,\dots,x_{i_1+\dots+i_m})\mathcal I_{j_1,\dots,j_{n}}(x_{i_1+\dots+i_m+1},\dots,x_{i_1+\dots+i_m+j_1+\dots+j_n}).\label{hfgiuf}
\end{align}
Define for $(x_1,x_{i_1+1},\dots,x_{i_1+\dots+i_{m-1}+1})$ and
$$(x_{i_1+\dots+i_m+1},x_{i_1+\dots+i_m+j_1+1},\dots,
x_{i_1+\dots+i_m+j_1+\dots+j_{n-1}+1})$$ the number $\alpha_1$ as the lowest index $j$ such that there exists a
$$\beta_1 \in \{
i_1+\dots+i_m+1,\,i_1+\dots+i_m+j_1+1,\ldots,
 i_1+\dots+i_m+j_1+\dots+j_{n-1}+1\}$$ with $x_j =x_{j'}$. Define $(\alpha_i,\beta_i)_i $ for $i >1$ analogously. In this way one produces a $\varkappa \in \operatorname{Pair}(m,n)$. Then $\left(\mathcal I_{i_1,\dots,i_m}\otimes \mathcal I_{j_1,\dots,j_{n}}\right)_\varkappa$ is of the form
 $\mathcal I_{l_1,\dots,l_{m+n-k}}$ for appropriate $l_1, \ldots, l_k$ and $k =|\varkappa |$.
 By \eqref{vtyftuykrf}, \eqref{dkjcisgv}--\eqref{hfgiuf} and recalling that the measure $M$ is symmetric on each $\Psi^{(k)}(X)$,
$$\int_{\Psi(X)}\mathcal KG_1^{(m)}\otimes \mathcal K G_2^{(n)}\,dM=\int_{\Psi(X)}\mathcal K(G_1^{(m)}\diamond G_2^{(n)})\,dM. $$
Hence, for any $G_1,G_2\in\mathcal Q$,
\begin{equation}\label{utye67e4}
\int_{\Psi(X)}\mathcal KG_1\otimes \mathcal K G_2\,dM=\int_{\Psi(X)}\mathcal K(G_1\diamond G_2)\,dM. \end{equation}
(Note that $G_1\diamond G_2\in\mathcal Q$.) Hence, by Lemma~\ref{utft7u7r} and \eqref{utye67e4}, for each $G\in\mathcal Q$
$$ \int_{\Psi(X)}\mathcal K(G\diamond G)\,dM\ge0.$$
Now the result follows from Lemma~\ref{ygf7r76}.
\end{proof}

Next we extend the result of Lemma~\ref{bhgyuft} to a more general class of functions $G$ by approximation.

\begin{lemma}\label{gufry7f}
Let $\Lambda\in\mathcal B_c(X)$.
Let  a function $G:\Phi(Y)\to\R$ be of the form
\begin{equation}\label{111}
G=G^{(0)}+\sum_{j=1}^J G_j^{(n_j)},\end{equation}
 where $G^{(0)}:\Phi^{(0)}(Y)\to\R$, $J\in\mathbb N$, and each  function $G_j^{(n_j)}:\Phi^{(n_j)}(Y)\to\R$ is of the form
\begin{equation}\label{222}
G_j^{(n_j)}(x_1,s_1,\dots,x_{n_j}s_{n_j})= g_j^{(n_j)}(x_1,\dots,x_{n_j})f_j^{(n_j)}(s_1,\dots,s_{n_j})s_1\dotsm s_{n_j}.
\end{equation}
 Here $n_j\in\mathbb N$,  the functions $g_j^{(n_j)}$ and $f_j^{(n_j)}$ are measurable and bounded and each function  $g_j^{(n_j)}$ vanishes outside the set  $\Lambda^{(n_j)}_{\widehat0}$. Then
\begin{equation}\label{yr75i6e6ie}
\int_{\Phi(Y)} G\diamond G\,d\rho\ge0.\end{equation}
\end{lemma}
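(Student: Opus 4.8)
The plan is to prove \eqref{yr75i6e6ie} by approximating each bounded factor $f_j^{(n_j)}$ by polynomials in the variables $s_1,\dots,s_{n_j}$ and thereby reducing the claim to Lemma~\ref{bhgyuft}. The decisive structural observation is that the extra factor $s_1\dotsm s_{n_j}$ in \eqref{222} turns a polynomial $f_j^{(n_j)}$ into an admissible $s$-dependence for $\mathcal Q$: if $p$ is any polynomial in $s_1,\dots,s_{n_j}$, then $p(s_1,\dots,s_{n_j})\,s_1\dotsm s_{n_j}$ is a finite linear combination of monomials $s_1^{i_1}\dotsm s_{n_j}^{i_{n_j}}$ in which every exponent satisfies $i_l\ge1$, which is exactly the form admitted in \eqref{dkjcisgv}. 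Hence, replacing each $f_j^{(n_j)}$ by a polynomial $p_{j,N}$ and keeping $g_j^{(n_j)}$ (which still vanishes outside $\Lambda^{(n_j)}_{\widehat0}$), the resulting function $G_N:=G^{(0)}+\sum_{j=1}^J g_j^{(n_j)}\,p_{j,N}\,s_1\dotsm s_{n_j}$ lies in $\mathcal Q$, so Lemma~\ref{bhgyuft} gives $\int_{\Phi(Y)}G_N\diamond G_N\,d\rho\ge0$. It then suffices to choose the $p_{j,N}$ so that $\int_{\Phi(Y)}G_N\diamond G_N\,d\rho\to\int_{\Phi(Y)}G\diamond G\,d\rho$, whence \eqref{yr75i6e6ie} follows by taking the limit.

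First I would make the quadratic form explicit. Expanding by \eqref{vtyftuykrf}, $\int_{\Phi(Y)}G\diamond G\,d\rho$ is a finite sum, over $1\le j,j'\le J$ and pairings $\varkappa\in\operatorname{Pair}(n_j,n_{j'})$, of integrals of $(G_j^{(n_j)}\otimes G_{j'}^{(n_{j'})})_\varkappa$ over $\set_k$ against $\rho^{(k)}$, with $k=n_j+n_{j'}-|\varkappa|$. Inserting $d\rho^{(k)}=(s_1\dotsm s_k)^{-1}\,d\xi^{(k)}$ from \eqref{dsrte564ew} and tracking the explicit $s$-products of the two factors, one checks that their product equals $(s_1\dotsm s_k)$ times the product of the shared variables: each of the $k$ positions is covered once, except the $|\varkappa|$ shared positions, which are covered twice. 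The factor $(s_1\dotsm s_k)^{-1}$ therefore cancels, and each term reduces to $\int_{\set_k}g_j\,g_{j'}\,f_j\,f_{j'}\big(\prod_{\alpha\in\text{shared}}s_\alpha\big)\,d\xi^{(k)}$. Since $g_j$ and $g_{j'}$ vanish outside $\Lambda$-powers, this integral is carried by $\xi^{(k)}$ restricted to $\{x\in\Lambda^{(k)}_{\widehat0}\}$, i.e.\ by the finite measure $\xi^{(k)}_{\Lambda^{(k)}_{\widehat0}}$ of Lemma~\ref{tyre75r86dss}.

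The analytic core is the approximation. By (C1), Lemma~\ref{lhgiuty9} and the bound \eqref{it68r}, the moments of $\xi^{(n)}_{\Lambda^{(n)}_{\widehat0}}$ obey $\xi^{\Lambda^{(n)}_{\widehat0}}_{i_1,\dots,i_n}\le(i_1+\dots+i_n+n)!\,C_\Lambda^{i_1+\dots+i_n+n}$; summing this against $\prod_l\varepsilon^{i_l}/i_l!$ and using $\sum_{i_1+\dots+i_n=s}1/(i_1!\dotsm i_n!)=n^s/s!$ shows that $\int e^{\varepsilon(s_1+\dots+s_n)}\,d\xi^{(n)}_{\Lambda^{(n)}_{\widehat0}}<\infty$ for all sufficiently small $\varepsilon>0$. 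A finite measure on $(\R_+)^n$ possessing such an exponential moment is determinate, and polynomials are dense in its $L^2$-space; the same is true of every finite marginal of $\big(\prod_{\alpha\in\text{shared}}s_\alpha\big)\,d\xi^{(k)}\restriction\{x\in\Lambda^{(k)}_{\widehat0}\}$, since multiplying by a polynomial weight and projecting preserves exponential moments. For each $j$, let $\kappa_j$ be the (finite) sum over all terms above of the marginal measures on $(\R_+)^{n_j}$ against which $f_j$ is integrated, and pick polynomials $p_{j,N}\to f_j^{(n_j)}$ in $L^2(\kappa_j)$; since each individual marginal is dominated by $\kappa_j$, this forces $L^2$-convergence with respect to each of them. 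Writing $f_jf_{j'}-p_{j,N}p_{j',N}=(f_j-p_{j,N})f_{j'}+p_{j,N}(f_{j'}-p_{j',N})$ and applying the Cauchy--Schwarz inequality with respect to the finite positive measure $|g_jg_{j'}|\big(\prod_{\alpha\in\text{shared}}s_\alpha\big)\,d\xi^{(k)}$, each term of $\int_{\Phi(Y)}G_N\diamond G_N\,d\rho-\int_{\Phi(Y)}G\diamond G\,d\rho$ is bounded by $L^2$-norms of $f_j-p_{j,N}$ (tending to $0$) times the uniformly bounded $L^2$-norms of $f_{j'}$ and of $p_{j',N}$.

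The step I expect to be the main obstacle is precisely this limit passage: justifying the $L^2$-density of polynomials for the relevant finite measures --- which I would derive from the exponential-moment bound furnished by (C1) and Lemma~\ref{lhgiuty9} --- and arranging a single polynomial sequence $p_{j,N}$ that simultaneously controls all the finitely many terms of the $\diamond$-expansion, the unavoidable overlap among them stemming from the pairing operation $(\,\cdot\,)_\varkappa$, which evaluates $f_j$ and $f_{j'}$ on shared variables. Once these two points are secured, Lemma~\ref{bhgyuft} yields $\int_{\Phi(Y)}G\diamond G\,d\rho=\lim_{N\to\infty}\int_{\Phi(Y)}G_N\diamond G_N\,d\rho\ge0$, which is \eqref{yr75i6e6ie}.
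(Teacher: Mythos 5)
Your proposal is correct and follows essentially the same route as the paper: approximate each $f_j^{(n_j)}$ by polynomials (the factor $s_1\dotsm s_{n_j}$ guaranteeing membership in $\mathcal Q$), apply Lemma~\ref{bhgyuft} to the approximants, and pass to the limit term by term in the $\diamond$-expansion via Cauchy--Schwarz, with polynomial density secured by the moment bound \eqref{it68r}. The only cosmetic difference is that the paper builds the explicit dominating measure $Z_{n,N}$ in \eqref{hjfuyftfuft}--\eqref{gddrdrdfyt} where you take the sum $\kappa_j$ of the actual marginals appearing, and it cites the quasi-analyticity criteria of \cite{BK} where you derive an exponential-moment bound; these are interchangeable.
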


\begin{proof}
Let
$N:=\max\{n_1,n_2,\dots,n_J\}$.
For each $n\in\{1,2,\dots,N\}$,  we define a measure $\zeta_{n,N}$ on $( \R_+)^n$ by
\begin{equation}\label{hjfuyftfuft}
\zeta_{n,N}:=\sum_{i=n}^{2N}P_n\xi^{(i)}_{\Delta_i}.\end{equation}
Here
 $\Delta_i:=\Lambda^{(i)}_{\widehat0}$ and  $P_n\xi^{(i)}_{\Delta_i}$ denotes the projection of the (symmetric) measure $\xi^{(i)}_{\Delta_i}$ onto its first $n$ coordinates. Note that $\zeta_{n,N}$ is a symmetric measure on $( \R_+)^n$. We next define a  measure $Z_{n,N}$ on $( \R_+)^n$ by
\begin{equation}\label{gddrdrdfyt}
dZ_{n,N}(s_1,\dots,s_n):=d\zeta_{n,N}(s_1,\dots,s_n)\sum_{A\in\mathscr P(n)}\prod_{j\in A}s_j.\end{equation}
Here $\mathscr P(n)$ denotes the power set of $\{1,\dots,n\}$ and $\prod_{j\in\varnothing}s_j:=1$. Clearly, $Z_{n,N}$
is also a symmetric measure.
By \eqref{tyde6ue6iut}, \eqref{hjfuyftfuft}, and \eqref{gddrdrdfyt}, the moments of the measure $Z_{n,N}$ are given by
$$\int_{( \R_+)^n}s_1^{i_1}\dotsm s_n^{i_n}\,dZ_{n,N}(s_1,\dots,s_n)=\sum_{i=n}^{2N}\sum_{A\in\mathscr P(n)}
\xi^{\Delta_i}_{i_1+\chi_A(1),\dots,i_n+\chi_A(n),0\dots,0},\quad (i_1,\dots,i_n)\in\mathbb Z_+^n.$$
 Hence, by \eqref{it68r},
 \begin{equation}\label{gufttyf}
 \int_{( \R_+)^n}s_1^{i_1}\dotsm s_n^{i_n}\,dZ_{n,N}(s_1,\dots,s_n)\le (2N-n-1)2^n(i_1+\dots+i_n+n+2N)!\,C_\Lambda^{i_1+\dots+i_n+n+2N}.
 \end{equation}
By  \eqref{gufttyf} and \cite[Chapter 5, Subsec.~2.1, Examples 2.1, 2.2]{BK}, the set of polynomials is dense in $L^2(( \R_+)^n,dZ_{n,N})$.

 For each $j=1,\dots,J$, we clearly have $f_j^{(n_j)}\in L^2(( \R_+)^{n_j},dZ_{n_j,N})$. Hence, there exists a sequence of polynomials $(p_{j,k}^{(n_j)})_{k=1}^\infty$ such that
\begin{equation}\label{tyew5w5w} p_{j,k}^{(n_j)}\to f_j^{(n_j)}\text{ in }L^2(( \R_+)^{n_j},dZ_{n_j,N})\quad \text{as }k\to\infty.\end{equation}
Set $G_k:=G^{(0)}+\sum_{j=1}^J G_{j,k}^{(n_j)}$, where
$$ G_{j,k}^{(n_j)}(x_1,s_1,\dots,x_{n_j}s_{n_j}):=g_j^{(n_j)}(x_1,\dots,x_{n_j}) p_{j,k}^{(n_j)}(s_1,\dots,s_{n_j})s_1\dotsm s_{n_j}.$$
We then have $G_k\in\mathcal Q$ for each $k\in\mathbb N$.
By Lemma~\ref{bhgyuft},
\begin{equation}\label{gfyufyufyxfdR}\int_{\Phi(Y)}G_k\diamond G_k\,d\rho\ge0,\quad k\in\mathbb N.\end{equation}

We claim that
\begin{equation}\label{gtydyrdey6}\int_{\Phi(Y)}G_k\diamond G_k\,d\rho\to \int_{\Phi(Y)}G\diamond G\,d\rho
\quad\text{as }k\to\infty.\end{equation}
Indeed,  let us fix any $i,j\in\{1,\dots,J\}$  and any $\varkappa\in\operatorname{Pair}(n_i,n_j)$
with $|\varkappa|=l$,
and prove that
\begin{equation}\label{tyde6ed}
\int_{\set_{n_i+n_j-l}}(G^{(n_i)}_{i,k}\otimes G^{(n_j)}_{j,k})_\varkappa\,d\rho^{(n_i+n_j-l)}\to \int_{\set_{n_i+n_j-l}}(G_i^{(n_i)}\otimes G_j^{(n_j)})_\varkappa\,d\rho^{(n_i+n_j-l)}
\quad\text{as }k\to\infty.\end{equation}
For simplicity of notation, let us assume that $\varkappa$ is of the form
$$\{(n_i-l+1,n_i+1),\, (n_i-l+2,n_i+2),\, (n_i-l+3,n_i+3)\dots, (n_i,n_i+l)\}. $$
Then
\begin{align}
&\int_{\set_{n_i+n_j-l}}(G^{(n_i)}_{i,k}\otimes G^{(n_j)}_{j,k})_\varkappa\,d\rho^{(n_i+n_j-l)}\notag\\
&= \int_{\set_{n_i+n_j-l}}g_i^{(n_i)}(x_1,\dots,x_{n_i}) p_{i,k}^{(n_i)}(s_1,\dots,s_{n_i})\notag\\
&\quad\times
g_j^{(n_j)}(x_{n_i-l+1},x_{n_i-l+2},\dots,x_{n_i+n_j-l}) p_{j,k}^{(n_j)}(s_{n_i-l+1},s_{n_i-l+2},\dots,s_{n_i+n_j-l})\notag\\
&\quad\times s_{n_i-l+1}s_{n_i-l+2}\dotsm s_{n_i}
\,d\xi^{(n_i+n_j-l)}(x_1,s_1,\dots,x_{n_i+n_j-l}, s_{n_i+n_j-l}).\label{vtyrd6re456s}
\end{align}
Hence, there exists $C>0$ such that
\begin{align}
&\bigg|\int_{\set_{n_i+n_j-l}}(G^{(n_i)}_{i,k}\otimes G^{(n_j)}_{j,k})_\varkappa\,d\rho^{(n_i+n_j-l)}
- \int_{\set_{n_i+n_j-l}}(G^{(n_i)}_{i}\otimes G^{(n_j)}_{j,k})_\varkappa\,d\rho^{(n_i+n_j-l)}\bigg|
\notag\\
&\le \int_{\set_{n_i+n_j-l}}
\big|g_i^{(n_i)}(x_1,\dots,x_{n_i})g_j^{(n_j)}(x_{n_i-l+1},x_{n_i-l+2},\dots,x_{n_i+n_j-l})\big|\notag\\
&\quad\times
|p_{i,k}^{(n_i)}(s_1,\dots,s_{n_i})
-f_{i}^{(n_i)}(s_1,\dots,s_{n_i})|\notag\\
&\quad\times
|p_{j,k}^{(n_j)}(s_{n_i-l+1},s_{n_i-l+2},\dots,s_{n_i+n_j-l})|\notag\\
&\quad\times s_{n_i-l+1}s_{n_i-l+2}\dotsm s_{n_i}
\,d\xi^{(n_i+n_j-l)}(x_1,s_1,\dots,x_{n_i+n_j-l}, s_{n_i+n_j-l})\notag\\
&\le C\int_{\set_{n_i+n_j-l}}\chi_
{\Lambda^{(n_i+n_j-l)}_{\widehat 0}}(x_1,\dots,x_{n_i+n_j-l})\notag\\
&\quad\times
|p_{i,k}^{(n_i)}(s_1,\dots,s_{n_i})
-f_{i}^{(n_i)}(s_1,\dots,s_{n_i})|\notag\\
&\quad\times
|p_{j,k}^{(n_j)}(s_{n_i-l+1},s_{n_i-l+2},\dots,s_{n_i+n_j-l})|\notag\\
&\quad\times s_{n_i-l+1}s_{n_i-l+2}\dotsm s_{n_i}
\,d\xi^{(n_i+n_j-l)}(x_1,s_1,\dots,x_{n_i+n_j-l}, s_{n_i+n_j-l})\notag\\
&\le C
\bigg(\int_{\set_{n_i+n_j-l}}\chi_
{\Lambda^{(n_i+n_j-l)}_{\widehat 0}}(x_1,\dots,x_{n_i+n_j-l})\notag\\
&\quad\times
|p_{i,k}^{(n_i)}(s_1,\dots,s_{n_i})
-f_{i}^{(n_i)}(s_1,\dots,s_{n_i})|^2\notag\\
&\quad\times s_{n_i-l+1}s_{n_i-l+2}\dotsm s_{n_i}
\,d\xi^{(n_i+n_j-l)}(x_1,s_1,\dots,x_{n_i+n_j-l}, s_{n_i+n_j-l})
\bigg)^{1/2}\notag\\
&\quad\times
\bigg(\int_{\set_{n_i+n_j-l}}\chi_
{\Lambda^{(n_i+n_j-l)}_{\widehat 0}}(x_1,\dots,x_{n_i+n_j-l})\notag\\
&\quad\times
|p_{j,k}^{(n_j)}(s_{n_i-l+1},s_{n_i-l+2},\dots,s_{n_i+n_j-l})|^2\notag\\
&\quad\times s_{n_i-l+1}s_{n_i-l+2}\dotsm s_{n_i}
\,d\xi^{(n_i+n_j-l)}(x_1,s_1,\dots,x_{n_i+n_j-l}, s_{n_i+n_j-l})
\bigg)^{1/2}\notag\\
&\le C\,\| p_{i,k}^{(n_i)}-f_i^{(n_i)}\|_{L^2(( \R_+)^{n_i},dZ_{n_i,N})}\, \|p_{j,k}^{(n_j)}\|_{L^2(( \R_+)^{n_j},dZ_{n_j,N})}\to0\ \text{as }k\to\infty,\label{6drwqwqaqaa}
\end{align}
where we used the Cauchy inequality and \eqref{tyew5w5w}. Analogously,
\begin{equation}\label{tyed66ire}
\bigg|\int_{\set_{n_i+n_j-l}}(G^{(n_i)}_{i}\otimes G^{(n_j)}_{j,k})_\varkappa\,d\rho^{(n_i+n_j-l)}
- \int_{\set_{n_i+n_j-l}}(G^{(n_i)}_{i}\otimes G^{(n_j)}_{j})_\varkappa\,d\rho^{(n_i+n_j-l)}\bigg|\to0\ \text{as }k\to\infty.
\end{equation}
By \eqref{6drwqwqaqaa} and \eqref{tyed66ire},  formula \eqref{tyde6ed} follows.
Formula \eqref{gtydyrdey6} follows from \eqref{tyde6ed}. Now, the lemma follows from \eqref{gfyufyufyxfdR} and \eqref{gtydyrdey6}. \end{proof}

\begin{proof}[Proof of Theorem~\ref{gu76rt68o}.]
As a special case of Lemma~\ref{gufry7f}, formula \eqref{yr75i6e6ie} holds for each function $G\in\mathcal S$. Hence, by Lemma~\ref{gfdtyeri7r6t}, the measure $\rho$ satisfies condition (PD). Thus, Theorem~\ref{gu76rt68o} is proven.
\end{proof}


Since the correlation measure $\rho$ of the point process $\nu$ from Theorem~\ref{gu76rt68o} is concentrated on $\Phi(Y)$, the point process $\nu$ is concentrated on $\Gamma_p(Y)$, the set of pinpointing configurations in $Y$,
see\ e.g.\  \cite[Corollary 1]{LM}. 
Recalling formula \eqref{ddddydyyd}, one sees that for  each $\Lambda\in\mathcal B_c(X)$,
\begin{align}\int_{\Gamma_p(Y)}\mathfrak M_\Lambda\,d\nu&=\int_{\Gamma_p(Y)}\sum_{(x,s)\in\gamma}\chi_\Lambda(x)s\,d\nu(\gamma) \notag\\
&=\int_Y \chi_\Lambda(x)s \,d\rho^{(1)}(x,s)\notag\\
&=\int_Y \chi_\Lambda(x) \,d\xi^{(1)}(x,s)<\infty.\label{gf765re6i5e4i}\end{align}
Hence, $\mathfrak M_\Lambda<\infty$ $\nu$-a.s., and therefore $\nu(\Gamma_{pf}(Y))=1$, cf.\ \eqref{fdstrset6s} for the definition of $\Gamma_{pf}(Y)$. Recall the bijective mapping $\mathcal E:\mathbb K(X)\to\Gamma_{pf}(Y)$. As  already discussed in Section~\ref{5tew5w}, the inverse mapping $\mathcal E^{-1}$ is measurable. So we can define a probability measure $\mu'$ on $\mathbb K(X)$ as the pushforward of $\nu$ under $\mathcal E^{-1}$. Thus,  to finish the proof of Theorem~\ref{jkgfrt7urd}, it suffices to show that $\mu=\mu'$.

Let $\Lambda\in\mathcal B_c(X)$. Recall that, for any
$i_1,\dots,i_k\in\mathbb N$, $k\in\mathbb N$,
$$\int_{\set_k}\chi_{\Lambda^{(k)}_{\widehat 0}}(x_1,\dots,x_k)s_1^{i_1}\dotsm s_k^{i_k}\,d\rho^{(k)}(x_1,s_1,\dots,x_k,s_k)<\infty.$$
Hence, using the definition of a correlation measure (analogously as in \eqref{gf765re6i5e4i}),
we easily see that, for each $n\in\mathbb N$,
$$\int_{\Gamma_{pf}(Y)}\bigg(\sum_{(x,s)\in\gamma}\chi_\Lambda(x)s\bigg)^n\,d\nu(\gamma)<\infty.$$
 Therefore, for each $n\in\mathbb N$,
$$\int_{\mathbb K(X)}\eta(\Lambda)^n\,d\mu'(\eta)<\infty.$$
Here $\eta(\Lambda):=\langle\eta,\chi_\Lambda\rangle$, i.e., the $\eta$-measure of $\Lambda$. Hence,  $\mu'$ has finite moments. We denote by $(M^{(n)}_{\mu'})_{n=0}^\infty$ the moment sequence of the random discrete measure $\mu'$. By Theorem~\ref{ufut7fruvfguqfd} and the construction of the measure $\rho$, it follows that
\begin{equation}\label{yude64ewsa}
M'_{i_1,\dots,i_n}=M_{i_1,\dots,i_n}\,,\quad i_1,\dots,i_n\in\mathbb N,\ n\in\mathbb N,\end{equation}
where the measures $M'_{i_1,\dots,i_n}$ are defined analogously to $M_{i_1,\dots,i_n}$, by  starting with the moment sequence $(M^{(n)}_{\mu'})_{n=0}^\infty$\,, rather than $(M^{(n)})_{n=0}^\infty$. By virtue of \eqref{yude64ewsa}, the moment sequence $(M^{(n)}_{\mu'})_{n=0}^\infty$ coincides with the moment sequence $(M^{(n)})_{n=0}^\infty$.

Now, fix any sets $\Lambda_1,\dots,\Lambda_n\in \mathcal B_c(X)$. For any $i_1,\dots,i_n\in\mathbb Z_+$, we get
\begin{align}
&\int_{\mathbb K(X)}\eta(\Lambda_1)^{i_1}\dotsm
\eta(\Lambda_n)^{i_n}\,d\mu'(\eta) \notag\\
&\quad =
\int_{X^{i_1+\dots+i_n}}
\big(
\chi_{\Lambda_1}^{\otimes i_1}\otimes \dots\otimes \chi_{\Lambda_n}^{\otimes i_n}\big)(x_1,\dots,x_{i_1+\dots+i_n})
 \,dM^{(i_1+\dots+i_n)}(x_1,\dots,x_{i_1+\dots+i_n}).
 \notag\\ &\quad = \int_{\mathbb M(X)}\eta(\Lambda_1)^{i_1}\dotsm
\eta(\Lambda_n)^{i_n}\,d\mu(\eta).\label{futr7ei543eews}
\end{align}
By (C1), \eqref{futr7ei543eews}, and the Carleman criterion, the joint distribution of the random variables $\eta(\Lambda_1),\dots,\eta(\Lambda_n)$ under $\mu'$ coincides with the joint distribution of the random variables $\eta(\Lambda_1),\dots,\eta(\Lambda_n)$ under $\mu$. But it is well known (see e.g.\ \cite{Kallenberg}) that  $\mathcal B(\mathbb M(X))$ coincides with  the minimal $\sigma$-algebra on $\mathbb M(X)$ with respect to which each function $\eta\mapsto \eta(\Lambda)$ with $\Lambda\in\mathcal B_c(X)$, is measurable. Therefore, we indeed get the equality $\mu=\mu'$.
\end{proof}

\section{Moment problem on $\mathbb K(X)$ \label{secmoment}}

As a consequence of our results, we will now present a solution of the moment problem on $\mathbb K(X)$. Consider a sequence $(M^{(n)})_{n=0}^\infty$, where $M^{(0)}=1$ and for each $n\in\mathbb N$, $M^{(n)}\in\mathbb M(X^n)$ is symmetric.
Analogously to the proof of Theorem~\ref{jkgfrt7urd}, we define the measure $M$ on $\Psi(X)$. Denote by $\mathscr F$ the space of all measurable, bounded functions $F:\Psi(X)\to\R$ such that $F$ vanishes outside a set
$\Psi^{(0)}(X)\cup\left(\bigcup_{n=1}^N \Lambda^n\right)$ where $N\in\mathbb N$ and $\Lambda\in\mathcal B_c(X)$.
We will say that the sequence $(M^{(n)})_{n=0}^\infty$ is {\it positive definite\/} if,  for each $F\in\mathscr F$, \eqref{yitu8r8fy7de} holds.
Clearly, if $(M^{(n)})_{n=0}^\infty$ is the moment sequence of a random measure $\mu$, then it is positive definite.

\begin{corollary}\label{sec4cor1} Consider a sequence $(M^{(n)})_{n=0}^\infty$, where $M^{(0)}=1$ and for each $n\in\mathbb N$, $M^{(n)}\in\mathbb M(X^n)$ is symmetric. Assume that $(M^{(n)})_{n=0}^\infty$ satisfies conditions {\rm (C1)} and {\rm (C2)}. Then $(M^{(n)})_{n=0}^\infty$ is the moment sequence of a random discrete measure on $X$ if and only if $(M^{(n)})_{n=0}^\infty$ is positive definite and satisfies conditions (i) and (ii) of Theorem~\ref{jkgfrt7urd}.
\end{corollary}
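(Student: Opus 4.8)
The plan is to derive both implications directly from the proof of Theorem~\ref{jkgfrt7urd}, the only genuinely new point being to locate precisely where that proof used the a priori existence of a realizing measure and to check that it can be replaced by the positive definiteness hypothesis.

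For necessity I would argue as follows. If $(M^{(n)})_{n=0}^\infty$ is the moment sequence of a random discrete measure $\mu$, then $\mu$ is in particular a random measure, so the defining relation \eqref{bvufutf} yields, for every $F\in\mathscr F$, $\int_{\Psi(X)}F\otimes F\,dM=\int_{\mathbb M(X)}\big(\sum_n\langle\eta^{\otimes n},F^{(n)}\rangle\big)^2\,d\mu(\eta)\ge0$; this is exactly the assertion of Lemma~\ref{utft7u7r} and shows positive definiteness. Since moreover $\mu(\mathbb K(X))=1$, the forward implication of Theorem~\ref{jkgfrt7urd} gives conditions (i) and (ii).

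The substance is the converse, and here the plan is to run the converse half of the proof of Theorem~\ref{jkgfrt7urd} essentially verbatim. In that argument the hypothesis that $(M^{(n)})_{n=0}^\infty$ is the moment sequence of an actual probability measure is invoked \emph{only} through Lemma~\ref{utft7u7r}, and Lemma~\ref{utft7u7r} is in turn used \emph{only} inside the proof of Lemma~\ref{bhgyuft}, where it is applied to the functions $F=\mathcal KG$ with $G\in\mathcal Q$ in order to conclude $\int_{\Psi(X)}(\mathcal KG)\otimes(\mathcal KG)\,dM\ge0$. The key step I would therefore isolate is the verification that each such $F=\mathcal KG$ lies in the class $\mathscr F$: by \eqref{jhzvcgv} and \eqref{jhacvc}, $\mathcal KG$ is a finite linear combination of functions $\tfrac1{n!}\mathcal R_{i_1,\dots,i_n}g^{(n)}$, each measurable and bounded when $g^{(n)}$ is, and---because the factor $\mathcal I_{i_1,\dots,i_n}$ collapses all arguments onto the $n$ representative variables---supported in $\Lambda^{i_1+\dots+i_n}$ whenever $g^{(n)}$ is supported in $\Lambda^{(n)}_{\widehat0}$ with $\Lambda\in\mathcal B_c(X)$. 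Taking $\Lambda$ to contain the finitely many supports and $N$ to dominate the degrees shows $\mathcal KG\in\mathscr F$, so the assumed positive definiteness \eqref{yitu8r8fy7de} delivers exactly the inequality that Lemma~\ref{utft7u7r} previously provided. With Lemma~\ref{bhgyuft} thus re-established, Lemmas~\ref{gufry7f} and~\ref{gfdtyeri7r6t} together with Theorem~\ref{gu76rt68o} apply unchanged, producing a point process $\nu$ with correlation measure $\rho$; as in Theorem~\ref{jkgfrt7urd} it is concentrated on $\Gamma_{pf}(Y)$, and the pushforward $\mu'$ of $\nu$ under $\mathcal E^{-1}$ is a random discrete measure whose derived measures satisfy $M'_{i_1,\dots,i_n}=M_{i_1,\dots,i_n}$ by Theorem~\ref{ufut7fruvfguqfd}, so that, via the bijection recalled in Section~\ref{5tew5w}, its moment sequence is $(M^{(n)})_{n=0}^\infty$.

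The main obstacle I anticipate is conceptual bookkeeping rather than new estimation: one must be confident that positive definiteness on $\mathscr F$ is tested on \emph{exactly} the functions $\mathcal KG$ that arise in the construction, which is what the support-and-boundedness check above guarantees. I would also note that, in contrast to Theorem~\ref{jkgfrt7urd}, no closing uniqueness argument is required here, since we are not comparing $\mu'$ with a previously given measure but simply exhibiting $\mu'$ as a random discrete measure realizing $(M^{(n)})_{n=0}^\infty$.
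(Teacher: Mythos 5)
Your proposal is correct and follows essentially the same route as the paper: the authors likewise observe that the a priori existence of $\mu$ enters the proof of Theorem~\ref{jkgfrt7urd} only through Lemma~\ref{utft7u7r}, whose conclusion is precisely the positive definiteness of $(M^{(n)})_{n=0}^\infty$, so the converse construction of $\mu'$ goes through unchanged under that hypothesis. Your explicit check that the functions $\mathcal KG$ with $G\in\mathcal Q$ belong to the test class $\mathscr F$ is a detail the paper leaves implicit, but it is the right verification and nothing more is needed.
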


\begin{proof}The result immediately follows from  Theorem~\ref{jkgfrt7urd} and its proof because the existence of $\mu$ was only used in Lemma~\ref{utft7u7r}. The assertion of this lemma is nothing else but the positive definiteness of $(M^{(n)})_{n=0}^\infty$.
\end{proof}

We also  obtain a characterization of point processes in terms of their moments.

\begin{corollary}\label{hfyrd6edhgyhg}
{\rm (i)} Let $\mu$ be a random measure on $X$, i.e., a probability measure on\linebreak $(\mathbb M(X),\mathcal B(\mathbb M(X)))$. Assume that $\mu$ has finite moments,
and let $(M^{(n)})_{n=0}^\infty$ be its moment sequence.
Further assume that conditions {\rm (C1)} and {\rm (C2)} are
satisfied.
Then $\mu$ is a simple point process, i.e., $\mu(\Gamma(X))=1$, if and only if, for any $n\in\mathbb N$ and any $i_1,\dots,i_n\in\mathbb N$, we have $M_{i_1,\dots,i_n}=M_{1,\dots,1}$, i.e., for each $\Delta\in\mathcal B(X^{(n)}_{\widehat 0})$,
\begin{equation}\label{fufrrvhghgh7dr} M_{i_1,\dots,i_n}(\Delta)=M_{1,\dots,1}(\Delta),\quad i_1,\dots,i_n\in\mathbb N.\end{equation}
In the latter case, the correlation measure $\rho$ of $\mu$ is given by
\begin{equation}\label{ffyuftydydey6} \rho^{(n)}(\Delta)=\frac1{n!}\, M^{(n)}(\Delta),\quad \Delta\in\mathcal B(X^{(n)}_{\widehat 0}),\end{equation}
where $\rho^{(n)}$ is the restriction of $\rho$ to $\Gamma^{(n)}(X)$, $\rho^{(n)}$ being identified with a measure on $X^{(n)}_{\widehat 0}$.

\noindent {\rm (ii)} Consider a sequence $(M^{(n)})_{n=0}^\infty$, where $M^{(0)}=1$ and for each $n\in\mathbb N$, $M^{(n)}\in\mathbb M(X^n)$ is symmetric. Assume that $(M^{(n)})_{n=0}^\infty$ satisfies conditions {\rm (C1)} and {\rm (C2)}. Then $(M^{(n)})_{n=0}^\infty$ is the moment sequence of a simple point process in $X$ if and only if $(M^{(n)})_{n=0}^\infty$ is positive definite and \eqref{fufrrvhghgh7dr} holds.
\end{corollary}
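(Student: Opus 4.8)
The plan is to read the characterization directly off the machinery of Sections~\ref{5tew5w} and~\ref{hftuttfd7u}, exploiting the observation that a simple point process is exactly a random discrete measure all of whose weights equal $1$. In the language of the associated point process $\nu=\mathcal E(\mu)$ on $Y=X\times\R_+$, this means that $\nu$ is carried by configurations whose $\R_+$-coordinates are all equal to $1$, equivalently that each measure $\xi^{(n)}_\Delta$ of Theorem~\ref{ufut7fruvfguqfd} is the single atom $c_\Delta\,\delta_{(1,\dots,1)}$ with $c_\Delta:=\frac1{n!}M_{1,\dots,1}(\Delta)$. The whole proof reduces to translating ``$\xi^{(n)}_\Delta$ is a point mass at $(1,\dots,1)$'' into the moment condition \eqref{fufrrvhghgh7dr} and back.

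For the forward implication in~(i) I would argue as follows. If $\mu(\Gamma(X))=1$, then $\mu$ is in particular a random discrete measure, so Theorem~\ref{ufut7fruvfguqfd} applies and $\nu$ is concentrated on configurations of the form $\{(x_i,1)\}$; hence each $\xi^{(n)}_\Delta$ lives on $\{s_1=\dots=s_n=1\}$. Plugging this into the moment identity \eqref{fstrst6e6} gives $\frac1{n!}M_{i_1+1,\dots,i_n+1}(\Delta)=\xi^{(n)}_\Delta((\R_+)^n)=\frac1{n!}M_{1,\dots,1}(\Delta)$ for every multi-index, which is precisely \eqref{fufrrvhghgh7dr}. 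Formula \eqref{ffyuftydydey6} then follows from the density relation in Theorem~\ref{ufut7fruvfguqfd}(iii): on $\{s=1\}$ the weight $(s_1\dotsm s_n)^{-1}$ equals $1$, so $\rho^{(n)}(\Delta)=\xi^{(n)}_\Delta((\R_+)^n)=\frac1{n!}M_{1,\dots,1}(\Delta)=\frac1{n!}M^{(n)}(\Delta)$ for $\Delta\in\mathcal B(X^{(n)}_{\widehat0})$.

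For the converse the key elementary fact is that a determinate measure $\sigma$ on $[0,\infty)$ all of whose moments coincide with a constant $c\ge0$ must equal $c\,\delta_1$, since $\int(s-1)^2\,d\sigma=\int s^2\,d\sigma-2\int s\,d\sigma+\int d\sigma=c-2c+c=0$. Assuming \eqref{fufrrvhghgh7dr}, the sequence $\xi^\Delta_{\mathbf i}=\frac1{n!}M_{1,\dots,1}(\Delta)=c_\Delta$ is constant in $\mathbf i$, hence is the moment sequence of $c_\Delta\,\delta_{(1,\dots,1)}$ on $(\R_+)^n$. It is therefore positive definite, giving condition~(i) of Theorem~\ref{jkgfrt7urd}, and its first projection $P_1\xi^{(n)}_\Delta=c_\Delta\delta_1$ has no atom at $0$, so condition~(ii) holds as well; I would note that for $c_\Delta>0$ the degeneracy $D^\Delta_1=r^\Delta_0r^\Delta_2-(r^\Delta_1)^2=c_\Delta^2-c_\Delta^2=0$ forces the series \eqref{ug86r8o7} to diverge, exactly as the no-mass-at-$0$ criterion requires (the case $c_\Delta=0$ being trivial). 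Theorem~\ref{jkgfrt7urd} then yields $\mu(\mathbb K(X))=1$, and the uniqueness in the moment problem (from the Carleman bound of~(C1)) identifies $\xi^{(n)}_\Delta$ with $c_\Delta\,\delta_{(1,\dots,1)}$ via the one-dimensional fact applied coordinatewise. Consequently $\rho^{(n)}$ is concentrated on $\{s_1=\dots=s_n=1\}$, so $\nu=\mathcal E(\mu)$ is carried by configurations with all weights equal to $1$, and applying $\mathcal E^{-1}$ gives $\mu(\Gamma(X))=1$.

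Part~(ii) then follows by the same computation with Corollary~\ref{sec4cor1} in place of Theorem~\ref{jkgfrt7urd}: positive definiteness of $(M^{(n)})_{n=0}^\infty$ together with \eqref{fufrrvhghgh7dr} supplies exactly the hypotheses of Corollary~\ref{sec4cor1}, producing a random discrete measure with the prescribed moments, which the concentration argument above identifies as a simple point process; the reverse direction is immediate, since any moment sequence of a random measure is positive definite and a simple point process satisfies \eqref{fufrrvhghgh7dr} by part~(i). The main obstacle I anticipate is not analytic but bookkeeping: confirming that the constant sequence $r^\Delta_i\equiv c_\Delta$ genuinely verifies the Stieltjes-type conditions \eqref{bkjgtfi8ytf8gi}--\eqref{ug86r8o7}, in particular interpreting the vanishing Hankel determinants in \eqref{ug86r8o7} correctly, whereas the conceptual heart of the statement is the trivial moment computation $\int(s-1)^2\,d\sigma=0$ above.
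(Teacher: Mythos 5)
Your proposal is correct and follows essentially the same route as the paper: reduce everything to the observation that \eqref{fufrrvhghgh7dr} forces each $\xi^{(n)}_\Delta$ to be the point mass $\frac1{n!}M_{1,\dots,1}(\Delta)\,\delta_{(1,\dots,1)}$, verify conditions (i) and (ii) of Theorem~\ref{jkgfrt7urd} for that degenerate measure, and then read off that $\nu=\mathcal E(\mu)$ lives on $\Gamma(X\times\{1\})$. Your explicit $\int(s-1)^2\,d\sigma=0$ computation and the check of the degenerate Hankel determinants in \eqref{ug86r8o7} merely spell out details the paper asserts without elaboration.
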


\begin{proof} As easily seen, it suffices to prove only part (i).
Assume that $\mu$ is a point process in $X$. Hence, $\mu$ is a random discrete measure on $X$. The corresponding point process $\nu=\mathcal E(\mu)$ is concentrated on
$$\Gamma(X\times\{1\})=\big\{\{(x,1)\}_{x\in\gamma}\mid\gamma\in\Gamma(X)\big\}.$$
Hence, $\Gamma(X\times\{1\})$ can  naturally be identified with $\Gamma(X)$, and under this identification we get $\mu=\nu$.
Furthermore, the correlation measure $\rho$ of $\mu$ coincides with the correlation measure of $\nu$, provided  we have identified
$\Gamma_0(X)$ with $\Gamma_0(X\times\{1\})$. Now, formulas \eqref{fufrrvhghgh7dr}, \eqref{ffyuftydydey6} follow from
Theorem~\ref{ufut7fruvfguqfd}.

Next, assume that $\mu$ is a random measure which satisfies \eqref{fufrrvhghgh7dr}. Hence, for any $n\in\mathbb N$ and $\Delta\in \mathcal B_c(X^{(n)}_{\widehat 0})$, we get
$$\xi_{i_1,\dots,i_n}^\Delta=\xi^\Delta_{0,\dots,0}\,,\quad i_1,\dots,i_n\in\mathbb  Z_+.$$
In other words, for each $n\in\mathbb N$ and $\Delta\in\mathcal B_c(X^{(n)}_{\widehat0})$, the moment sequence $\xi^\Delta_{\mathbf i}$ is constant and thus the measure $\xi^{(n)}_\Delta$ is concentrated at one point, $(1,\dots ,1)$. Hence, conditions (i) and (ii) Theorem~\ref{jkgfrt7urd} are satisfied, and so $\mu$ is a random discrete measure.
  Consequently, by \eqref{ufru8} and \eqref{uyr76or8o}, the measure $\rho^{(n)}$ is concentrated  on the set
$$\big\{
(x_1,1,\dots,x_n,1)\mid(x_1,\dots,x_n)\in X^{(n)}_{\widehat 0}
\big\}.$$ Therefore, the point process $\nu=\mathcal E(\mu)$ is concentrated on
$\Gamma(X\times\{1\})$. Hence, $\mu$ is a point process in $X$.
\end{proof}

  Let us now assume that $X=\mathbb R^d$, or more generally, that $X$ is a connected $C^\infty$ Riemannian manifold. Let $\mathcal D(X):=C_0^\infty(X)$ be the  space of smooth, compactly supported, real-valued functions on $X$, equipped with the nuclear space topology, see e.g.\ \cite{BK} for detail. Let $\mathcal D'(X)$ be its dual space, and let $\mathscr C(\mathcal D'(X))$
  be the cylinder $\sigma$-algebra on it. Note that $\mathbb M(X)\subset \mathcal D'(X)$ and the trace $\sigma$-algebra of
 $\mathscr C(\mathcal D'(X))$ on $\mathbb M(X)$ coincides with $\mathcal B(\mathbb M(X))$.

   For $\omega\in\mathcal D'(X)$ and $\varphi\in\mathcal D(X)$, we denote by $\langle\omega,\varphi\rangle$ their dual pairing.
    Following \cite{BKKL}, we inductively define  Wick polynomials on $\mathcal D'(X)$ by
  \begin{gather}
 \langle{:}\omega{:},\varphi\rangle:=\langle\omega,\varphi\rangle,\quad\varphi\in\mathcal D(X)\notag\\
 \langle{:}\omega^{\otimes n}{:},\varphi_1\otimes\dots\otimes\varphi_n\rangle:=\frac1{n^2}\bigg[\sum_{i=1}^{n}
 \langle\omega,\varphi_i\rangle\langle{:}\omega^{\otimes(n-1)}{:},\varphi_1\otimes
 \dots\otimes\check\varphi_i\otimes\dots\otimes\varphi_n\rangle\notag\\
 \text{}-2\sum_{1\le i<j\le n} \langle\omega,\varphi_i\rangle\langle{:}\omega^{\otimes(n-1)}{:},\varphi_1\otimes
 \dots\otimes(\varphi_j\varphi_i)\otimes\dots\otimes\check\varphi_j\otimes\dots\otimes\varphi_n\rangle\bigg],\notag\\ \varphi_1,\dots,\varphi_n\in\mathcal D(X),\ n\ge2,\label{ftrse5w}
    \end{gather}
  where $\check \varphi_i$ denotes that the factor $\varphi_i$ is absent in the tensor product.

Let $\mu$ be a probability measure on $(\mathcal D'(X),\mathscr C(\mathcal D'(X)))$ which has finite moments. For each $n\in\mathbb N$, we consider the function \begin{align}
\left( \mathcal D'(X) \right) ^{\otimes n} & \rightarrow \mathbb{R} \nonumber \\
\varphi_1 \otimes \dots \otimes \varphi_n & \mapsto \int_{\mathcal D'(X)}\langle{:}\omega^{\otimes n}{:},\varphi_1\otimes\dots\otimes \varphi_n\rangle\,d\mu(\omega). \label{defcorgen}
\end{align}
These functions form a proper generalization of the correlation measure of a point process, which in turn are a generalisation of the classical correlation functions of statistical mechanics. Hence, we call  them \emph{generalized correlation functions of the measure $\mu$}. The above results can we rewritten in terms of conditions on the generalized correlation functions.

\begin{corollary}\label{huigtt}
Assume that, for each $n\in\mathbb N$, the generalized correlation function defined in (\ref{defcorgen}) associated to a measure $\mu$ on $\mathcal D'(X)$ 
can be represented via a (positive) measure $\rho^{(n)}$ on $(X_{\widehat0}^{(n)},\mathcal B(X_{\widehat0}^{(n)}))$, that is, for any $\varphi_1,\dots,\varphi_n\in\mathcal D(X)$,
\begin{equation}\label{bhfuf}
\int_{\mathcal D'(X)}\langle{:}\omega^{\otimes n}{:},\varphi_1\otimes\dots\otimes \varphi_n\rangle\,d\mu(\omega)=
\int_{X^{(n)}_{\widehat 0}}\varphi_1(x_1)\dotsm\varphi_n(x_n)\,d\rho^{(n)}(x_1,\dots,x_n).\end{equation}
Furthermore, assume that the measures $\rho^{(n)}$ satisfy condition {\rm (C2)} in the sense that  $M^{(n)}$ is replaced with $\rho^{(n)}$ in the formulation of {\rm (C2)}. Then, $\mu$ is a point process, i.e., $\mu(\Gamma(X))=1$.
\end{corollary}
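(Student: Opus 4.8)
The plan is to reduce the assertion to the moment characterisation of simple point processes already obtained in Corollary~\ref{hfyrd6edhgyhg}. Recall that $\mu$ is assumed to have finite moments, so its moment measures $M^{(n)}$ are defined by \eqref{bvufutf}. I would show that the representability hypothesis \eqref{bhfuf}, together with the assumed decay of $\rho^{(n)}$, forces the sequence $(M^{(n)})_{n=0}^\infty$ to be positive definite and to satisfy conditions {\rm(C1)}, {\rm(C2)} and \eqref{fufrrvhghgh7dr}; Corollary~\ref{hfyrd6edhgyhg}(ii) then produces a simple point process with these moments, and a Carleman uniqueness argument identifies it with $\mu$.

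The heart of the matter is to connect the generalized correlation function on the left of \eqref{bhfuf} with the diagonal-decomposed measures $M_{i_1,\dots,i_k}$ of Section~\ref{5tew5w}. Unfolding the recursion \eqref{ftrse5w} and inserting the moment identity \eqref{bvufutf}, an induction on $n$ expresses $\int_{\mathcal D'(X)}\langle{:}\omega^{\otimes n}{:},\varphi_1\otimes\dots\otimes\varphi_n\rangle\,d\mu(\omega)$ as a fixed signed combination of the integrals of $\varphi_1\dotsm\varphi_n$ against the measures $M_{i_1,\dots,i_k}$ with $k\le n$ (the Wick inversion of \cite{BKKL}). The decisive observation I would extract is that this functional is representable by a \emph{nonnegative} measure concentrated on the off-diagonal $X^{(n)}_{\widehat 0}$ if and only if all diagonal multiplicities collapse, that is $M_{i_1,\dots,i_n}=M_{1,\dots,1}$ for every $(i_1,\dots,i_n)\in\mathbb N^n$, and that in this case $\rho^{(n)}=\frac1{n!}\,M_{1,\dots,1}$. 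This is exactly relation \eqref{fufrrvhghgh7dr}, and it simultaneously shows that each $M^{(n)}$ is a genuine nonnegative measure.

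It then remains to verify the growth conditions and positivity. Since $M^{(n)}(\Lambda^{(n)}_{\widehat 0})=M_{1,\dots,1}(\Lambda^{(n)}_{\widehat 0})=n!\,\rho^{(n)}(\Lambda^{(n)}_{\widehat 0})$, the assumed bound on $\rho^{(n)}$ is precisely condition {\rm(C2)} for $M^{(n)}$. To obtain {\rm(C1)} I would expand a full moment over set partitions: because every $M_{i_1,\dots,i_k}$ equals $M_{1,\dots,1}$, and the image under $B_\pi$ of $\Lambda^n\cap X_\pi^{(n)}$ is $\Lambda^{(|\pi|)}_{\widehat 0}$, one gets
\[
M^{(n)}(\Lambda^n)=\sum_{\pi\in\Pi(n)}M^{(|\pi|)}\big(\Lambda^{(|\pi|)}_{\widehat 0}\big)=\sum_{k=1}^n S(n,k)\,M^{(k)}\big(\Lambda^{(k)}_{\widehat 0}\big)\le\sum_{k=1}^n S(n,k)\,k!\,(C'_\Lambda)^k\le C_\Lambda^{\,n}\,n!,
\]
where $S(n,k)$ are the Stirling numbers of the second kind and the last step uses the elementary growth $\sum_{k=1}^n S(n,k)\,k!\le C^n n!$ of the ordered Bell numbers; this is {\rm(C1)}. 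Positive definiteness of $(M^{(n)})_{n=0}^\infty$ is automatic, $\mu$ being a probability measure, which is exactly the content of Lemma~\ref{utft7u7r}. Corollary~\ref{hfyrd6edhgyhg}(ii) now yields a simple point process $\nu$ with moment sequence $(M^{(n)})_{n=0}^\infty$; by {\rm(C1)} and the Carleman criterion the associated moment problem has a unique solution, and arguing as in the final paragraph of the proof of Theorem~\ref{jkgfrt7urd}, using that $\mathscr C(\mathcal D'(X))$ is generated by the maps $\omega\mapsto\langle\omega,\varphi\rangle$, we conclude $\mu=\nu$ and hence $\mu(\Gamma(X))=1$.

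The main obstacle is the combinatorial step of the second paragraph: one must make the inversion between $\langle{:}\omega^{\otimes n}{:},\cdot\rangle$ and the measures $M_{i_1,\dots,i_k}$ explicit enough to see that off-diagonal representability is equivalent to the collapse $M_{i_1,\dots,i_n}=M_{1,\dots,1}$. The delicate point is to track how the pointwise-product terms $\varphi_j\varphi_i$ in \eqref{ftrse5w} generate, after integration, exactly the diagonal restrictions carrying a multiplicity $i_l\ge 2$, and to check that the signs built into the Wick recursion force the coefficient of each genuinely diagonal contribution to vanish precisely when the representing measure $\rho^{(n)}$ is supported off the diagonal. Everything downstream is a routine combination of the already-established corollaries with standard Stirling-number estimates.
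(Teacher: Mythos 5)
Your proposal is correct and follows essentially the same route as the paper: invert the Wick recursion \eqref{ftrse5w} over set partitions to identify each sub-diagonal restriction $M_{i_1,\dots,i_k}$ with $M_{1,\dots,1}=k!\,\rho^{(k)}$, deduce \eqref{fufrrvhghgh7dr}, transfer (C2) and derive (C1), note positive definiteness is automatic, and invoke Corollary~\ref{hfyrd6edhgyhg}(ii). The only difference is that you spell out steps the paper leaves to the reader (the Stirling/ordered-Bell estimate for (C1) and the Carleman identification of $\mu$ with the resulting point process), which is a faithful elaboration rather than a different argument.
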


\begin{proof} Using \eqref{ftrse5w}, one can easily derive by induction a representation of a monomial $\langle\omega,\varphi_1\rangle\dotsm \langle\omega,\varphi_n\rangle$ through Wick polynomials.
This formula and \eqref{bhfuf} imply that, for each $n\in\mathbb N$, there exists a (positive) measure $M^{(n)}$ on $X^n$ such that
$$\int_{\mathcal D'(X)}\langle\omega,\varphi_1\rangle\dotsm \langle\omega,\varphi_n\rangle\,d\mu(\omega)= \int_{X^n}\varphi_1(x_1)\dotsm\varphi_n(x_n)\,dM^{(n)}(x_1,\dots,x_n).$$
Furthermore,  formulas \eqref{fufrrvhghgh7dr}, \eqref{ffyuftydydey6} hold,  because each summand in the representation of a monomial through Wick polynomials corresponds to a particular sub-diagonal $X^{(n)}_\pi$ of $X^n$. (We leave details of these calculations to the interested reader.)

By the assumption of the corollary, the sequence $(M^{(n)})_{n=0}^\infty$ with $M^{(0)}=1$ satisfies (C2). Furthermore, (C2) and  \eqref{fufrrvhghgh7dr} easily imply that $(M^{(n)})_{n=0}^\infty$ satisfies (C1). Since $(M^{(n)})_{n=0}^\infty$  is the  moment sequence of a probability measure, it is positive definite. Hence, the statement follows from Corollary \ref{hfyrd6edhgyhg}, (ii).\end{proof}

\begin{remark}
In fact, Corollary~\ref{huigtt} is essentially already contained in \cite{BKKL} and \cite[Corollary 1]{LM}, though not presented as an independent result. If we do not assume {\it a priori\/} the existence of a measure $\mu$, then we have additionally to assume that the generalised correlation functions have to fulfil the condition (PD).
Note that Theorem~\ref{bhftuf7urf7}, taken from \cite{LM}, was used in order to obtain the point process in $Y$ (Theorem~\ref{gu76rt68o}),  which in turn, was used to construct the random discrete measure on $X$. Hence, it is not surprising that we get a comparable result in the special case where instead of a random discrete measure on $X$, one actually wants to characterize a point process in $X$.
\end{remark}

\section*{Acknowledgements}
We  are grateful to the referee for their  careful reading of the manuscript and a number of helpful suggestions for improvement in the article.

The authors   acknowledge the financial support of the SFB~701 ``Spectral structures and topological methods in mathematics'' (Bielefeld University) and the Research Group
``Stochastic Dynamics: Mathematical Theory and Applications'' (Center for Interdisciplinary Research, Bielefeld University). The authors would like to thank Ilya~Molchanov for fruitful discussions.


\begin{thebibliography}{99}

\bibitem{A} Akhiezer, N.I.:
The classical moment problem and some related questions in analysis.
 Hafner Publishing Co., New York, 1965.

\bibitem{AB} Aldous, D.J.,  Barlow, M.T.:
On countable dense random sets. Seminar on Probability, XV (Univ. Strasbourg, Strasbourg, 1979/1980), pp. 311--327,
Lecture Notes in Math., 850, Springer, Berlin--New York, 1981.

\bibitem{Bauer}  Bauer, H.: Measure and integration theory.  Walter de Gruyter \&\ Co., Berlin, 2001.

\bibitem{BK} Berezansky, Y.M., Kondratiev, Y.G.: Spectral methods in infinite-dimensional analysis. Vol. 2,  Kluwer Academic Publishers, Dordrecht, 1995.


\bibitem{BKKL}  Berezansky, Y.M., Kondratiev, Y.G., Kuna, T.,  Lytvynov, E.: On a spectral representation for correlation measures in configuration space analysis. Methods Funct. Anal. Topology 5 (1999), no. 4, 87--100.

\bibitem{DVJ1}  Daley, D. J., Vere-Jones, D.: An introduction to the theory of point processes. Vol. I. Elementary theory and methods. Second edition.   Springer-Verlag, New York, 2003.

\bibitem{DVJ2} Daley, D. J., Vere-Jones, D.: An introduction to the theory of point processes. Vol. II. General theory and structure. Second edition.  Springer, New York, 2008.



 \bibitem{VGG1}    Gel'fand, I.M., Graev, M.I., Vershik, A.M.: Models of representations of current groups.   Representations of Lie groups and Lie algebras (Budapest, 1971), 121–179, Akad. Kiad{\'o}, Budapest, 1985.

\bibitem{HKPR} Hagedorn, D., Kondratiev, Y., Pasurek, T.,  R\"okner, M.: Gibbs states over the cone of discrete measures. J. Funct. Anal. 264 (2013),  2550--2583.



\bibitem{Kallenberg}  Kallenberg, O.: Random measures. Fourth edition. Akademie-Verlag, Berlin; Academic Press, London, 1986.

\bibitem{Kendall} Kendall, W.S.: Stationary countable dense random sets. Adv. in Appl. Probab. 32 (2000),  86--100.

\bibitem{Kingman} Kingman, J.F.C.:
Completely random measures.
Pacific J. Math. 21 (1967), 59--78.

\bibitem{KK}  Kondratiev, Y.G., Kuna, T.: Harmonic analysis on configuration space. I. General theory. Infin. Dimens. Anal. Quantum Probab. Relat. Top. 5 (2002),  201--233.

\bibitem{Lenard} Lenard, A.: Correlation functions and the uniqueness of the state
in classical statistical mechanics. Comm. Math. Phys.  30 (1973), 35--44.

\bibitem{LM} Lytvynov, E, Mei, L.: On the correlation measure of a family of commuting Hermitian operators with applications to particle densities of the quasi-free representations of the CAR and CCR. J. Funct. Anal. 245 (2007),  62--88.

\bibitem{RW} Rota, G.-C., Wallstrom, T.:
Stochastic integrals: a combinatorial approach.
Ann. Probab. 25 (1997), 1257--1283.

\bibitem{S} Shifrin, S. N.:
Infinite-dimensional smooth symmetric power moment problem for nuclear spaces.
S. N. Shifrin
Ukrainskii Matematicheskii Zhurnal,  28 (1976) 793--802.


\bibitem{TsVY} Tsilevich, N, Vershik, A,  Yor, M.: An infinite-dimensional analogue of the Lebesgue measure and distinguished properties of the gamma process. J. Funct. Anal. 185 (2001), 274--296.


\bibitem{VGG2}   Vershik, A.M.; Gel'fand, I.M., Graev, M.I.: Commutative model of the representation of the group of flows $\mathrm{SL}(2,\mathbf R)^X$ connected with a unipotent subgroup.  Funct. Anal. Appl. 17 (1983), 80--82.


    \end{thebibliography}
 \end{document}